\newtheorem{theorem}{Theorem}[section]
\newtheorem{lemma}[theorem]{Lemma}
\newtheorem{corollary}[theorem]{Corollary}
\newtheorem{proposition}[theorem]{Proposition}
\newtheorem{conjecture}[theorem]{Conjecture}
\newcommand{\cl}{{\rm cl}}
\newcommand{\blue}{\textcolor{black}}
\newcommand{\bloo}{\textcolor{black}}
\DeclarePairedDelimiter\ceil{\lceil}{\rceil}
\DeclarePairedDelimiter\floor{\lfloor}{\rfloor}
\begin{document}

\title{Cyclic matroids}

\thanks{The first and second authors were supported by the New Zealand Marsden Fund. The first author was also supported by a Rutherford Foundation Postdoctoral Fellowship.  The third author was supported by a University of Canterbury Doctoral Scholarship}

\author{Nick Brettell}
\address{School of Mathematics and Statistics, Victoria University of Wellington, Wellington, New Zealand}
\email{nick.brettell@vuw.ac.nz}

\author{Charles Semple}
\address{School of Mathematics and Statistics, University of Canterbury, Christchurch, New Zealand}
\email{charles.semple@canterbury.ac.nz}

\author{Gerry Toft}
\address{School of Mathematics and Statistics, University of Canterbury, Christchurch, New Zealand}
\email{gerry.toft@pg.canterbury.ac.nz}

\keywords{Cyclic matroids, wheels and whirls, free swirls, weak map.}

\subjclass{05B35}

\date{\today}

\begin{abstract}
For integers $s$ and $t$ exceeding one, a matroid~$M$ on $n$ elements is {\em nearly $(s, t)$-cyclic} if there is a cyclic ordering $\sigma$ of its ground set such that every $s-1$ consecutive elements of $\sigma$ are contained in an $s$-element circuit and every $t-1$ consecutive elements of $\sigma$ are contained in a $t$-element cocircuit. In the case $s=t$, nearly $(s, s)$-cyclic matroids have been studied previously. In this paper, we show that if $M$ is nearly $(s, t)$-cyclic and $n$ is sufficiently large, then these $s$-element circuits and $t$-element cocircuits are consecutive in $\sigma$ in a prescribed way, that is, $M$ is ``$(s, t)$-cyclic''. Furthermore, we show that, given $s$ and $t$ where $t\ge s$, every $(s, t)$-cyclic matroid on $n > s+t-2$ elements is a weak-map image of the $\left(\frac{t-s}{2}\right)$-th truncation of a certain $(s, s)$-cyclic matroid. If $s=3$, this certain matroid is the rank-$\frac{n}{2}$ whirl, and if $s=4$, this certain matroid is the rank-$\frac{n}{2}$ free swirl.
\end{abstract}

\maketitle

\section{Introduction}

Tutte's Wheels-and-Whirls Theorem~\cite{tut66} is synonymous with matroid theory. It says that, except for wheels and whirls, every $3$-connected matroid has a single-element deletion or a single-element contraction that is $3$-connected. The reason for this exception is that wheels and whirls are precisely the $3$-connected matroids in which every element is in a $3$-element circuit and a $3$-element cocircuit. In fact, wheels and whirls have a stronger property: if $M$ is a wheel or a whirl, then there is a cyclic ordering $\sigma$ of its ground set such that every \bloo{set of} two consecutive elements in $\sigma$ is contained in a $3$-element circuit and a $3$-element cocircuit. Furthermore, if $M$ is a wheel and $r(M)\ge 4$, or if $M$ is a whirl and $r(M)\ge 3$, then these $3$-element circuits and $3$-element cocircuits are unique, and the elements of these $3$-element circuits and $3$-element cocircuits are consecutive in $\sigma$. Brettell et al.~\cite{bre19b} studied matroids satisfying a generalisation of this property, that is, for a positive integer $s$ exceeding one, matroids whose ground sets have a cyclic ordering $\sigma$ such that every \bloo{set of} $s-1$ consecutive elements in $\sigma$ is contained in an $s$-element circuit and an $s$-element cocircuit. In this paper, we extend this study by considering generalisations of these matroids whereby the size of the circuit and the size of the cocircuit need not be the same.

Let $s$ and $t$ be positive integers exceeding one. A matroid $M$ is {\em nearly $(s, t)$-cyclic} if there exists a cyclic ordering $\sigma$ of $E(M)$ such that every \bloo{set of} $s-1$ consecutive elements of $\sigma$ \bloo{is} contained in an $s$-element circuit and every \bloo{set of} $t-1$ consecutive elements of $\sigma$ \bloo{is} contained in a $t$-element cocircuit, in which case we say that $\sigma$ is a {\em nearly $(s, t)$-cyclic ordering} of $E(M)$. \bloo{Although not explicitly stated, there is an implicit assumption that if $M$ is nearly $(s,t)$-cyclic, then $M$ has at least $\max\{s,t\}-1$ elements, so it has at least one $s$-element circuit and at least one $t$-element cocircuit.}

Wheels and whirls are nearly $(3, 3)$-cyclic, while spikes and swirls are nearly $(4, 4)$-cyclic. For all $r\ge 3$, a {\em rank-$r$ spike} is a matroid $M$ on $2r$ elements whose ground set can be partitioned into pairs $\{L_1, L_2, \ldots, L_r\}$ such that, for all distinct $i, j\in \{1, 2, \ldots, r\}$, the union of $L_i$ and $L_j$ is a $4$-element circuit and a $4$-element cocircuit. Therefore, if $\sigma$ is a cyclic ordering of $E(M)$ such that, for all $i$, the two elements in $L_i$ are consecutive in $\sigma$, then $\sigma$ is a nearly $(4, 4)$-cyclic ordering of $E(M)$. For all $r\ge 3$, a {\em rank-$r$ swirl} is a matroid $M$ on $2r$ elements obtained by first taking a simple matroid whose ground set is the disjoint union of a basis $B=\{b_1, b_2, \ldots, b_r\}$ and $2$-element sets $L_1, L_2, \ldots, L_r$ such that $L_i\subseteq \cl(\{b_i, b_{i+1}\})$ for all $i\in \{1, 2, \ldots, r\}$, where subscripts are interpreted modulo $r$, and then deleting the elements in $B$. If, for all $i$, the elements in $L_i$ are freely placed in the span of $\{b_i, b_{i+1}\}$ in this construction, then the resulting matroid is the {\em rank-$r$ free swirl}. Observe that $L_i\cup L_{i+1}$ is $4$-element circuit and a $4$-element cocircuit for all $i$. Therefore, if $L_i=\{e_i, f_i\}$ for all $i$, then $\sigma=(e_1, f_1, e_2, f_2, \ldots, e_r, f_r)$ is a nearly $(4, 4)$-cyclic ordering of $E(M)$, and so $M$ is nearly $(4, 4)$-cyclic.

The examples of nearly $(s, t)$-cyclic matroids in the last paragraph all have the property that $s=t$. To see an example of a nearly $(s, t)$-cyclic matroid where $s\neq t$, take a sufficiently large whirl and truncate it, that is freely add an element $f$ to the whirl, and then contract $f$. It is not difficult to show that the resulting matroid is nearly $(3, 5)$-cyclic. More generally, given \blue{odd} $t\ge 3$, the $\left(\frac{t-3}{2}\right)$-th truncation of a sufficiently large whirl results in a matroid that is nearly $(3, t)$-cyclic (see \Cref{free}).

Nearly $(s, t)$-cyclic matroids are highly structured. For example, suppose that $M$ is a rank-$r$ wheel, where $r\ge 4$, and $\sigma=(e_1, e_2, \ldots, e_n)$ is a nearly $(3, 3)$-cyclic ordering of its ground set. \bloo{Then, for all $i\in \{1, 2, \ldots, n\}$, one of $\{e_i,e_{i+1},e_{i+2}\}$ and $\{e_{i-1},e_i,e_{i+1}\}$ is the unique $3$-element circuit containing $\{e_i,e_{i+1}\}$ and the other is the unique $3$-element cocircuit containing $\{e_i,e_{i+1}\}$, with the parity of $i$ determining which is the circuit and which is the cocircuit.} The following definition captures this structure.

Let $s$ and $t$ be positive integers exceeding one. A matroid $M$ is {\em $(s, t)$-cyclic} if there exists a cyclic ordering $\sigma=(e_1, e_2, \ldots, e_n)$ of $E(M)$
such that each of the following holds, where subscripts are interpreted modulo $n$:
\begin{enumerate}[(i)]
\item either $\{e_1, e_2, \ldots, e_s\}$ or $\{e_2, e_3, \ldots, e_{s+1}\}$ is an $s$-element circuit of $M$;

\item either $\{e_1, e_2, \ldots, e_t\}$ or $\{e_2, e_3, \ldots, e_{t+1}\}$ is a $t$-element cocircuit of $M$;

\item if $\{e_i, e_{i+1}, \ldots, e_{i+s-1}\}$ is an $s$-element circuit for some $i\in \{1, 2, \ldots, n\}$, then $\{e_{i+2}, e_{i+3}, \ldots, e_{i+s+1}\}$ is also an $s$-element circuit of $M$; and

\item if $\{e_i, e_{i+1}, \ldots, e_{i+t-1}\}$ is a $t$-element cocircuit for some $i~\in~\left\{1, 2, \ldots, n\right\}$, then $\{e_{i+2}, e_{i+3}, \ldots, e_{i+t+1}\}$ is also a $t$-element cocircuit of $M$.
\end{enumerate}
A cyclic ordering satisfying (i)--(iv) is called an {\em $(s, t)$-cyclic ordering} of $E(M)$. \bloo{Note that our terminology differs from ~\cite{bre19b}; what we call a nearly $(t,t)$-cyclic ordering of a matroid $M$ was previously called a cyclic $(t-1,t)$-ordering of $M$, and what we call a $(t,t)$-cyclic ordering of $M$ was previously called a $t$-cyclic ordering of $M$.}

If $M$ is nearly $(2, 2)$-cyclic, then, as noted in~\cite{bre19b}, $M$ is obtained by taking direct sums of copies of $U_{1, 2}$, and so $M$ is $(2, 2)$-cyclic. Brettell et al.~\cite[Theorem~1.1]{bre19b} showed that, for all $s\ge 3$, if $\sigma$ is a nearly $(s, s)$-cyclic ordering of a matroid $M$ on $n$ elements and $n\ge 6s-10$, then $\sigma$ is an $(s, s)$-cyclic ordering of $M$. The first main result of this paper generalises that theorem.

\begin{theorem}
Let $M$ be a matroid on $n$ elements, and suppose that $\sigma$ is a nearly $(s, t)$-cyclic ordering of $M$, where $s, t\ge 3$. Let $t_1=\min\{s, t\}$ and $t_2=\max\{s, t\}$. If $n\ge 3t_1+t_2-5$ and $n\ge t_1+2t_2-1$, then $\sigma$ is an $(s, t)$-cyclic ordering of $M$.
\label{par_to_tot}
\end{theorem}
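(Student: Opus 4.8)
\emph{Proof idea.} The plan is to show that, once $n$ is large, the $s$-element circuits and $t$-element cocircuits witnessing the nearly $(s,t)$-cyclic property of $\sigma$ must consist of consecutive elements of $\sigma$, and then to deduce conditions (i)--(iv) from this by a short orthogonality computation. By matroid duality we may assume $s \le t$, so that $t_1 = s$ and $t_2 = t$ and the two hypotheses read $n \ge 3s+t-5$ and $n \ge s+2t-1$. Write $\sigma = (e_1, e_2, \ldots, e_n)$ with subscripts modulo $n$, and for each $i$ set $A_i = \{e_i, e_{i+1}, \ldots, e_{i+s-2}\}$ and $B_i = \{e_i, e_{i+1}, \ldots, e_{i+t-2}\}$. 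By hypothesis there is an $s$-element circuit $C_i \supseteq A_i$ and a $t$-element cocircuit $D_i^* \supseteq B_i$; fix such a choice for each $i$, and since $|C_i \setminus A_i| = 1 = |D_i^* \setminus B_i|$ write $C_i = A_i \cup \{x_i\}$ and $D_i^* = B_i \cup \{y_i\}$. The heart of the proof, and the step I expect to be the main obstacle, is the claim that for every $i$ one has $x_i \in \{e_{i-1}, e_{i+s-1}\}$ and $y_i \in \{e_{i-1}, e_{i+t-1}\}$; equivalently, every $C_i$ is a set of $s$ consecutive elements of $\sigma$ and every $D_i^*$ is a set of $t$ consecutive elements of $\sigma$.

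I would prove this claim first for the circuits. Suppose instead $x_i = e_k$ with $e_k$ outside the window $\{e_{i-1}, e_i, \ldots, e_{i+s-1}\}$, so that $e_k$ is separated from $A_i$ by a nonempty run of elements of $\sigma$ on each side. The aim is to produce a $t$-element cocircuit meeting $C_i$ in exactly one element, contradicting orthogonality. The candidates are the fixed cocircuits $D_j^*$ whose defining block $B_j$ either starts at an end element of $A_i$ or runs just inside one of the two gaps, chosen so that $|B_j \cap C_i| = 1$; since $|D_j^* \cap C_i| \ne 1$, orthogonality then forces the extra element $y_j$ into $C_i \setminus B_j = \{x_i\} \cup (A_i \setminus B_j)$, which is a severe restriction, and after a further case split (according to which gap is used and whether $y_j$ is $x_i$ or lies in $A_i$) one reaches a contradiction. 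The real work is in organising these cases; making room, inside the cyclic order, for a length-$(t-1)$ cocircuit block on each side of $A_i$ together with $A_i$ itself and the displaced element $e_k$, with a little slack, is exactly what consumes the two numerical hypotheses. Once every $C_i$ is known to be an arc, the statement for the cocircuits $D_i^*$ follows by the symmetric argument, now playing the (now known to be arc) length-$s$ circuit blocks against $D_i^*$ via orthogonality. It is probably cleanest to establish the claim in two stages: first confine $x_i$ and $y_i$ to windows of bounded length, then tighten to the pairs above.

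Granting the claim, the remaining conditions come out quickly. For (i): $A_2$ lies in the $s$-element circuit $C_2$, which by the claim equals $\{e_1, e_2, \ldots, e_s\}$ or $\{e_2, e_3, \ldots, e_{s+1}\}$; (ii) is identical, with $B_2$ and $D_2^*$ in place of $A_2$ and $C_2$. For (iii), suppose $\{e_i, e_{i+1}, \ldots, e_{i+s-1}\}$ is an $s$-element circuit but $\{e_{i+2}, e_{i+3}, \ldots, e_{i+s+1}\}$ is not. Applying the claim to $A_{i+2}$ forces $F := \{e_{i+1}, e_{i+2}, \ldots, e_{i+s}\}$ to be an $s$-element circuit. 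Let $D^*$ be the fixed $t$-element cocircuit containing $B_{i+s}$; by the claim it is $\{e_{i+s-1}, e_{i+s}, \ldots, e_{i+s+t-2}\}$ or $\{e_{i+s}, e_{i+s+1}, \ldots, e_{i+s+t-1}\}$. Since $n > s+t$ under the hypotheses there is no wrap-around overlap, so in the first case $D^*$ meets $\{e_i, e_{i+1}, \ldots, e_{i+s-1}\}$ in the single element $e_{i+s-1}$ and in the second $D^*$ meets $F$ in the single element $e_{i+s}$; either way orthogonality is violated. Condition (iv) is the dual statement and is proved the same way. This completes the proof, modulo the case analysis in the claim.
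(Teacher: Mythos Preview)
Your reduction after the claim is fine: once every chosen $C_i$ and $D_j^*$ is known to be an arc of $\sigma$, your derivation of (i)--(iv) via the cocircuit through $B_{i+s}$ is correct and clean. The gap is in the claim itself.

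The sketch you give for the claim does not go through as stated. Picking $j$ so that $|B_j\cap C_i|=1$ and using orthogonality indeed forces $y_j\in C_i$, but this is a constraint on $y_j$, not on $x_i$; with $x_i$ fixed and $j$ varying you learn where several extra cocircuit elements must land, but there is no contradiction without a further mechanism linking these constraints. You write ``after a further case split \ldots\ one reaches a contradiction'' without saying what conflicts with what, and I do not believe a purely local argument of this shape closes. The difficulty is that at this stage the cocircuits are not yet arcs, so their extra elements $y_j$ can absorb each orthogonality constraint one at a time.

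The paper's proof is organised quite differently and this difference is essential. It does not show directly that every $C_i$ is an arc. Instead it (a) proves that the $s$-circuit containing $A_i$ is \emph{unique} (so $c_i$ is well defined), (b) proves a ``marching'' lemma: $c_i$ and $c_{i+1}$ are cyclically adjacent in $\sigma$ once $c_i$ leaves a small window around $A_i$, together with $c_i\neq c_{i+1}$ and $c_i\neq c_{i+2}$, and (c) deduces that either some $\sigma(j,j+s-1)$ is a circuit, or there is a constant offset $k$ with $C_i=\sigma(i,i+s-2)\cup\{e_{i+s-1+k}\}$ for all $i$. The constant-offset situation is then killed by a global orthogonality argument that uses circuits $C_{i'}$ at several different indices $i'$ simultaneously; and once some arc is a circuit, a separate lemma (itself requiring the marching machinery) propagates to the full $(s,t)$-cyclic structure. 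The point is that the contradiction is global, coming from the behaviour of the whole family $\{c_i\}$, not from any single $C_i$; your plan to handle one $i$ at a time is what fails.

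If you want to salvage your outline, the missing ingredient is precisely this marching step: show that as $i$ increases by $1$, the displaced element $c_i$ moves by exactly $1$ along $\sigma$, and that its direction cannot reverse. That is what consumes the hypotheses $n\ge 3s+t-5$ and $n\ge s+2t-1$, and it requires comparing $C_i$, $C_{i+1}$, $C_{i+2}$ against several cocircuit blocks, not just one.
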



\noindent The proof of Theorem~\ref{par_to_tot} takes a different approach to that used in~\cite{bre19b}. Equating $s$ and $t$ in Theorem~\ref{par_to_tot}, we have the following corollary, improving the lower bound in~\cite[Theorem~1.1]{bre19b}.

\begin{corollary}
Let $M$ be a matroid on $n$ elements, and suppose that $\sigma$ is a nearly $(s, s)$-cyclic ordering of $M$ \blue{for $s\ge 3$. If $n \geq \max\{8, 4s-5\}$, then} $\sigma$ is an $(s, s)$-cyclic ordering of $M$.
\end{corollary}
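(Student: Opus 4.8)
The plan is to obtain this corollary as an immediate specialisation of \Cref{par_to_tot}, taking $t=s$. With $s=t\ge 3$ we have $t_1=t_2=s$, so the two numerical hypotheses of \Cref{par_to_tot} become $n\ge 3t_1+t_2-5=4s-5$ and $n\ge t_1+2t_2-1=3s-1$. Hence \Cref{par_to_tot} already guarantees that $\sigma$ is an $(s,s)$-cyclic ordering of $M$ whenever $n\ge\max\{4s-5,\,3s-1\}$, and all that remains is to verify that, for every integer $s\ge 3$, $\max\{4s-5,\,3s-1\}=\max\{8,\,4s-5\}$.

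For this last step I would split into the cases $s=3$ and $s\ge 4$. When $s\ge 4$, we have $4s-5\ge 3s-1$ (equivalently $s\ge 4$) and also $4s-5\ge 11>8$, so both maxima equal $4s-5$. When $s=3$, we have $4s-5=7$ and $3s-1=8$, so both maxima equal $8$. Thus the two bounds coincide for all $s\ge 3$, and the corollary follows directly.

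Since all of the structural content is already carried out in the proof of \Cref{par_to_tot}, there is essentially no obstacle here. The only point worth flagging is that, unlike the case $s\ge 4$, when $s=3$ the binding constraint is $n\ge 3s-1=8$ rather than $n\ge 4s-5=7$; this is exactly why the term $8$ appears in $\max\{8,\,4s-5\}$. One can also record, as a remark rather than part of the proof, that $6s-10\ge 4s-5$ for all $s\ge 3$ with equality only at $s=3$, so this bound indeed improves \cite[Theorem~1.1]{bre19b} for every $s\ge 4$ and matches it when $s=3$.
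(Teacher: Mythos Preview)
Your proof is correct and follows exactly the approach the paper intends: the corollary is stated immediately after \Cref{par_to_tot} with the remark ``Equating $s$ and $t$ in Theorem~\ref{par_to_tot}, we have the following corollary,'' and your case analysis simply makes explicit why the two hypotheses collapse to $n\ge\max\{8,4s-5\}$.

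One small inaccuracy in your closing remark: you write ``$6s-10\ge 4s-5$ for all $s\ge 3$ with equality only at $s=3$,'' but in fact $6s-10>4s-5$ strictly for every integer $s\ge 3$ (at $s=3$ the values are $8$ and $7$). What you mean is that $6s-10\ge\max\{8,4s-5\}$ with equality only at $s=3$, which is indeed the correct comparison with the bound of \cite[Theorem~1.1]{bre19b}.
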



For all positive integers $s$ and $t$ exceeding one, we will show that if a matroid on $n$ elements is nearly $(s, t)$-cyclic, then $n\ge s+t-2$. Observe that, for all such $s$ and $t$, the uniform matroid $U_{s-1, s+t-2}$ is nearly $(s, t)$-cyclic with \blue{$s+t-2$} elements. Thus this lower bound is sharp. Furthermore, if a matroid on $n$ elements is $(s, t)$-cyclic and $n>s+t-2$, then we will also show that $n$ is even and $s\equiv t\bmod{2}$. Hence, if a matroid $M$ is $(s, t)$-cyclic and $s\not\equiv t\bmod{2}$, then $M$ has exactly $s+t-2$ elements. Lastly, we suspect the inequalities $n\ge 3t_1+t_2-5$ and $n\ge t_1+2t_2-1$ in Theorem~\ref{par_to_tot} are
not tight, and leave it as an open problem to determine, for all positive integers $s, t\ge 2$, tight lower bounds on the size of the ground set of a matroid $M$ having the property that if $\sigma$ is a nearly $(s, t)$-cyclic ordering of $E(M)$, then $\sigma$ is an $(s, t)$-cyclic ordering of $E(M)$.

The second main result of this paper, Theorem~\ref{free}, shows that $(s, t)$-cyclic matroids are not wild. In particular, this result shows that, given positive integers $s$ and $t$ exceeding one, such that $t\ge s$, an $(s, t)$-cyclic matroid $M$ on $n$ elements, where \bloo{$n > s+t-2$}, is a weak-map image of the $\left(\frac{t-s}{2}\right)$-th truncation of a certain $(s, s)$-cyclic matroid. To formally state Theorem~\ref{free}, let $M_1$ and $M_2$ be matroids on ground sets $E_1$ and $E_2$, respectively, and suppose that $|E_1|=|E_2|$. Let $\varphi:E_1\rightarrow E_2$ be a bijection. We say $\varphi$ is a {\em weak map} from $M_1$ to $M_2$ if, for all independent sets $I$ in $M_2$, the set $\varphi^{-1}(I)$ is independent in $M_1$. Equivalently, $\varphi$ is a weak map from $M_1$ to $M_2$ if, for all circuits $C$ of $M_1$, the set $\varphi(C)$ contains a circuit of $M_2$. If $\varphi$ is such a map, $M_2$ is a {\em weak-map image} of $M_1$, and $M_1$ is said to be {\em freer} than $M_2$.

\begin{figure}
	\centering
	\begin{tikzpicture}
	\coordinate (a) at (30:1);
	\coordinate (b) at (90:1);
	\coordinate (c) at (150:1);
	\coordinate (d) at (210:1);
	\coordinate (e) at (270:1);
	\coordinate (f) at (330:1);
	
	\coordinate (a1) at (-10:2.5);
	\coordinate (a2) at (10:2.5);
	\coordinate (b1) at (50:2.5);
	\coordinate (b2) at (70:2.5);
	\coordinate (c1) at (110:2.5);
	\coordinate (c2) at (130:2.5);
	\coordinate (d1) at (170:2.5);
	\coordinate (d2) at (190:2.5);
	\coordinate (e1) at (230:2.5);
	\coordinate (e2) at (250:2.5);
	\coordinate (f1) at (290:2.5);
	\coordinate (f2) at (310:2.5);
	
	\draw (a1) -- (a) -- (a2);
	\draw (b1) -- (a) -- (b2);
	\draw (b1) -- (b) -- (b2);
	\draw (c1) -- (b) -- (c2);
	\draw (c1) -- (c) -- (c2);
	\draw (d1) -- (c) -- (d2);
	\draw (d1) -- (d) -- (d2);
	\draw (e1) -- (d) -- (e2);
	\draw (e1) -- (e) -- (e2);
	\draw (f1) -- (e) -- (f2);
	\draw (f1) -- (f) -- (f2);
	\draw (a1) -- (f) -- (a2);
	
	\draw[fill=white] (a) circle (2pt) node [label=left:$2$, xshift=4] {};
	\draw[fill=white] (a1) circle (2pt) node [label=right:$e_6$, xshift=-4] {};
	\draw[fill=white] (a2) circle (2pt) node [label=right:$e_5$, xshift=-4] {};
	\draw[fill=white] (b) circle (2pt) node [label=below:$1$, yshift=3] {};
	\draw[fill=white] (b1) circle (2pt) node [label=above right:$e_4$, xshift=-5, yshift=-5] {};
	\draw[fill=white] (b2) circle (2pt) node [label=above right:$e_3$, xshift=-5, yshift=-5] {};
	\draw[fill=white] (c) circle (2pt) node [label=right:$6$, xshift=-4] {};
	\draw[fill=white] (c1) circle (2pt) node [label=above left:$e_2$, xshift=5, yshift=-5] {};
	\draw[fill=white] (c2) circle (2pt) node [label=above left:$e_1$, xshift=5, yshift=-5] {};
	\draw[fill=white] (d) circle (2pt) node [label=right:$5$, xshift=-4] {};
	\draw[fill=white] (d1) circle (2pt) node [label=left:$e_{12}$, xshift=4] {};
	\draw[fill=white] (d2) circle (2pt) node [label=left:$e_{11}$, xshift=4] {};
	\draw[fill=white] (e) circle (2pt) node [label=above:$4$, yshift=-3] {};
	\draw[fill=white] (e1) circle (2pt) node [label=below left:$e_{10}$, xshift=5, yshift=5] {};
	\draw[fill=white] (e2) circle (2pt) node [label=below left:$e_9$, xshift=5, yshift=5] {};
	\draw[fill=white] (f) circle (2pt) node [label=left:$3$, xshift=4] {};
	\draw[fill=white] (f1) circle (2pt) node [label=below right:$e_8$, xshift=-5, yshift=5] {};
	\draw[fill=white] (f2) circle (2pt) node [label=below right:$e_7$, xshift=-5, yshift=5] {};
	\end{tikzpicture}
	\caption{The bipartite graph $G^{12}_4$.}
	\label{mostfreecycliceg}
\end{figure}
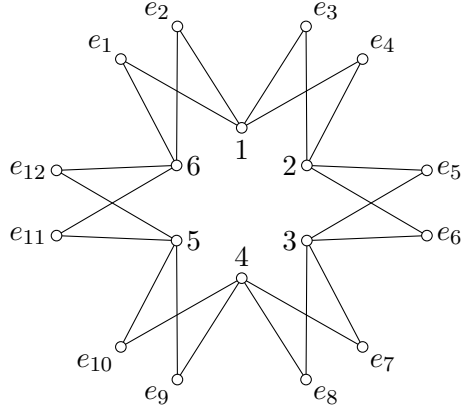

For vertices $u$ and $v$ of a graph, $u$ is a {\em neighbour} of $v$ if $u$ is adjacent to $v$, \bloo{and we let $N(v)$ denote the set of neighbours of $v$. Note that here, as well as elsewhere in the paper, we adopt the convention of writing singletons without set braces provided there is no ambiguity.}

Now let $s$ be an integer exceeding one and let $n$ be a positive even integer. We next define a certain matroid with parameters $s$ and $n$ \blue{that is transversal and co-transversal}. Let $G^n_s$ be the bipartite graph with vertex parts $E=\{e_1, e_2, \ldots, e_n\}$ and $\{1, 2, \ldots, \frac{n}{2}\}$ and, for all $i\in \{1, 2, \ldots, \frac{n}{2}\}$, the set of neighbours of $i$ is
$$N(i)=\{e_{2i-1}, e_{2i}, \ldots, e_{2i+s-2}\},$$
where subscripts are interpreted modulo $n$. For example, if $n = 12$ and $s = 4$, then $G^{12}_4$ is the bipartite graph shown in~\Cref{mostfreecycliceg}. The transversal matroid on $E$ in which
\bloo{$$(N(1), N(2), \ldots, N(\textstyle{\frac{n}{2}}))$$}
is a presentation is an example of a multi-path matroid~\cite{bon07}. Denote the dual of this transversal matroid by $\Psi^n_s$. \bloo{Multi-path matroids have the property that their duals are transversal~\cite[Theorem~3.8]{bon07}, so $\Psi^n_s$ is a transversal matroid. In fact, we shall show that $\Psi^n_s$ is a self-dual matroid.} If $s=2$, then $\Psi^n_s$ is isomorphic to the rank-$\frac{n}{2}$ matroid obtained by taking direct sums of copies of $U_{1, 2}$; while if $s=3$ \blue{or} $s=4$, then $\Psi^n_s$ is isomorphic to the rank-$\frac{n}{2}$ whirl \blue{or} rank-$\frac{n}{2}$ free swirl, respectively. For example, the dual of the transversal matroid realised by $G^{12}_4$ is the rank-$6$ free swirl. More generally, it turns out that, for all $s\ge 2$, the matroid $\Psi^n_s$ is $(s, s)$-cyclic.

Let $M$ be a matroid. If $r(M)>0$, then the matroid obtained from $M$ by freely adding an element $f$ and then contracting $f$ is called the {\em truncation} of $M$ and is denoted by $T(M)$. If $r(M)=0$, we set $T(M)=M$. For all positive integers $i$, the {\em $i$-th truncation} of $M$, denoted $T^i(M)$, is defined iteratively as $T^i(M)=T(T^{i-1}(M))$, where $T^0(M)=M$. The second main result of this paper is the following theorem.

\begin{theorem}
Let $M$ be an $(s, t)$-cyclic matroid on $n$ elements, where \blue{$n~\geq~s+t-1$}. If $t\ge s$, then $M$ is a weak-map image of the $\left(\frac{t-s}{2}\right)$-th truncation of $\Psi^n_s$, an $(s, t)$-cyclic matroid.
\label{free}
\end{theorem}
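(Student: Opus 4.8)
Throughout, set $k=(t-s)/2$. Since $n\ge s+t-1>s+t-2$, the facts recorded before the theorem give that $n$ is even and $s\equiv t\pmod 2$, so $k$ is a non-negative integer and $n\ge s+t$; also $r(\Psi^n_s)=n/2$ and $r(T^k(\Psi^n_s))=n/2-k=(n-t+s)/2$. The plan is to establish separately: (I) $N:=T^k(\Psi^n_s)$ is $(s,t)$-cyclic; and (II) $M$ is a weak-map image of $N$. For (I) I would work with the cyclic ordering $\sigma=(e_1,\dots,e_n)$ inherited from the $(s,s)$-cyclic ordering of $\Psi^n_s$. The translation $e_i\mapsto e_{i+2}$ is an automorphism of $\Psi^n_s$ (it cyclically permutes the neighbourhoods $N(i)$), hence an automorphism of $N$, and this gives conditions (iii) and (iv) immediately. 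Condition (i) holds because $\Psi^n_s$ has an $s$-element circuit that is a block of $s$ consecutive elements of $\sigma$, and since $s\le n/2-k$ this circuit is untouched by the $k$ truncations and remains an $s$-element circuit of $N$.

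For condition (ii) I would use the general fact that the hyperplanes of $T^k(P)$ are exactly the rank-$(r(P)-1-k)$ flats of $P$; hence the cocircuits of $N$ are the complements of the rank-$(n/2-1-k)$ flats of $\Psi^n_s$. Starting from an $s$-element cocircuit $\{e_i,\dots,e_{i+s-1}\}$ of $\Psi^n_s$, the shift automorphism shows each $H_j:=E\setminus\{e_{i+2j},\dots,e_{i+2j+s-1}\}$ is a hyperplane of $\Psi^n_s$, and since $s+2k=t\le n$ we get $\bigcap_{j=0}^{k}H_j=E\setminus\{e_i,\dots,e_{i+t-1}\}$, a flat. At each step the element $e_{i+2j+s}$ lies in $H_0\cap\cdots\cap H_j$ but not in $H_{j+1}$, so by submodularity the rank drops by exactly one at each step; thus this flat has rank $n/2-1-k$, and $\{e_i,\dots,e_{i+t-1}\}$ is a $t$-element cocircuit of $N$. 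Applying the shift automorphism, these $t$-element cocircuits cover every block of $t-1$ consecutive elements of $\sigma$, finishing (I).

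For (II) I would take $\varphi$ to be the identity, after first cyclically shifting the $(s,t)$-cyclic ordering of $M$ so that its $s$-element circuits and $t$-element cocircuits occupy the same positions of $\sigma$ as those of $N$. This alignment is possible because a cyclic shift of an $(s,t)$-cyclic ordering is again one, and because orthogonality forces the $s$-element circuits and the $t$-element cocircuits of any $(s,t)$-cyclic matroid on $n\ge s+t-1$ elements to lie at parities whose offset is congruent to $s$ modulo $2$---the same offset as in $N$. It then suffices to show $r_M(X)\le r_N(X)=\min\{r_{\Psi^n_s}(X),\,(n-t+s)/2\}$ for all $X\subseteq E$, which reduces to: (a) $r(M)\le(n-t+s)/2$; and (b) $r_M(X)\le r_{\Psi^n_s}(X)$ for every $X$. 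Claim (a) I would obtain from the cocircuit structure: if $D=\{e_i,\dots,e_{i+t-1}\}$ is a $t$-element cocircuit of $M$, then $E\setminus D$ is a hyperplane of $M$ and a block of $n-t$ consecutive elements of $\sigma$, which is exactly covered (via the shift-by-two pattern, the interior $s$-element circuits of $M$ starting at the right parity and $n-t-s$ being even) by $(n-t-s)/2+1$ of the $s$-element circuits of $M$; following these from one to the next increases the rank by at most one each time, so $r_M(E\setminus D)\le(s-1)+(n-t-s)/2=(n-t+s)/2-1$, and $r(M)=r_M(E\setminus D)+1\le(n-t+s)/2$.

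Claim (b)---equivalently, that every circuit of $\Psi^n_s$ contains a circuit of $M$---is the heart of the matter and the step I expect to be the main obstacle; note that even the case $k=0$, where $N=\Psi^n_s$, already contains the whole difficulty and is not available from earlier results. Here I would need an explicit grip on the circuits of $\Psi^n_s$, obtained by duality: $(\Psi^n_s)^*$ is the multi-path (hence transversal) matroid with presentation $(N(1),\dots,N(n/2))$, so the circuits of $\Psi^n_s$ are the complements of the hyperplanes of this transversal matroid, and these can be described via the K\"onig / defect-Hall rank formula for transversal matroids together with the regular interval structure of the $N(i)$. With such a description I would argue by contradiction: if $C$ is a circuit of $\Psi^n_s$ containing no circuit of $M$, then $C$ is independent in $M$, i.e.\ $r_M(C)=|C|$; but the way $C$ must meet the blocks of $\sigma$, combined with orthogonality against the $s$-element circuits of $M$ (and, where needed, against its $t$-element cocircuits), forces a dependency inside $C$---and this is precisely where the hypothesis $n\ge s+t-1$ is used. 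Finally, combining (a) and (b): for every $X$, $r_N(X)=\min\{r_{\Psi^n_s}(X),(n-t+s)/2\}\ge\min\{r_M(X),r(M)\}=r_M(X)$, so $\varphi$ is a weak map from $N$ onto $M$, and $M$ is a weak-map image of $T^k(\Psi^n_s)$.
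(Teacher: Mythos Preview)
Your treatment of part (I) and of claim (a) is sound, though it differs from the paper: the paper establishes that $T^{(t-s)/2}(\Psi^n_s)$ is $(s,t)$-cyclic by induction on $t$ (\Cref{truncation2}) rather than via your automorphism-and-hyperplane-intersection argument, and it obtains $r(M)=(n+s-t)/2$ exactly from an interval rank formula (\Cref{set_rank,mat_rank}) rather than from your circuit-chaining bound. These are acceptable alternatives.

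The real gap is claim (b), and your sketch does not supply the missing idea. You propose to describe the circuits of $\Psi^n_s$ via a Hall-defect computation and then force a dependency in $M$ by orthogonality with the prescribed $s$-circuits and $t$-cocircuits of $M$; but orthogonality only constrains how a set meets those particular (co)circuits, and there is no evident route from that to $r_M(C)<|C|$ for a general circuit $C$ of $\Psi^n_s$. The paper's mechanism is different and is exactly what you are missing. It proves (\Cref{circuits}) that every non-spanning circuit $C$ of $\Psi^n_s$ is contained in some interval $\sigma(x_i,y_j)$ and, crucially, \emph{spans} that interval in $\Psi^n_s$, so $r_{\Psi^n_s}(\sigma(x_i,y_j))=|C|-1$. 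Separately, \Cref{set_rank} shows that $r_M(\sigma(x_i,y_j))\le r_{\Psi^n_s}(\sigma(x_i,y_j))$ for every such interval: the rank of a length-$\ell$ interval starting at a circuit position is $\lfloor(s+\ell-1)/2\rfloor$ in both matroids whenever $\ell\le n-t+1$, and for longer intervals $r_M$ is capped at $r(M)\le r(\Psi^n_s)$ anyway. Chaining these gives $r_M(C)\le r_M(\sigma(x_i,y_j))\le|C|-1$, so $C$ is dependent in $M$. This is packaged as \Cref{weak_map}, whose hypothesis requires the rank comparison only on intervals, not on arbitrary subsets. The key insight absent from your proposal is that circuits of $\Psi^n_s$ span intervals whose ranks depend only on $s$ and the interval length---hence agree in every $(s,t)$-cyclic matroid with the same circuit alignment---which is what reduces ``all $X$'' to ``intervals only''.
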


In addition to this paper and \cite{bre19b}, \blue{there have been several recent studies into matroids with particular prescribed circuits and cocircuits}. Miller~\cite{mil14} investigated the matroids in which every pair of elements is contained in a $4$-element circuit and a $4$-element cocircuit, while Oxley et al.~\cite{oxl19} considered the $3$-connected matroids in which every pair of elements is in a $4$-element circuit and every element is in a $3$-element cocircuit, and the $4$-connected matroids in which every pair of elements is contained in a $4$-element circuit and a $4$-element cocircuit. Furthermore, Brettell et al.~\cite{bre19a} studied matroids in which every $t$-element subset of the ground set is contained in an $\ell$-element circuit and an $\ell$-element cocircuit. Relevant to this paper, \blue{their results imply} that if a matroid $M$ has the property that every $t$-element subset of $E(M)$ is contained in a $2t$-element circuit and a $2t$-element cocircuit, then, provided $|E(M)|$ is sufficiently large, $M$ is $(2t, 2t)$-cyclic. Further results concerning $(3, t)$-cyclic and $(4, t)$-cyclic matroids, including a characterisation of the $(4, 4)$-cyclic matroids on at least $8$ elements, will be found in Gerry Toft's PhD thesis.


The paper is organised as follows. The next section contains some preliminaries, while Section~\ref{props} establishes some basic properties of cyclic matroids. These properties are used in the proofs of Theorems~\ref{par_to_tot} and~\ref{free} which are given in Sections~\ref{cyclicthm} and~\ref{freethm}, respectively. The proof of Theorem~\ref{free} follows from a more general result concerning the duals of multi-path matroids. Lastly, in Section~\ref{counterexample}, we give a counterexample to a conjecture concerning $(s, s)$-cyclic matroids, given in~\cite{bre19b}. This conjecture says that if $s$ is an integer exceeding two \blue{and $M$ is an $(s,s)$-cyclic matroid, then $M$ can} be obtained from either a wheel or a \bloo{whirl} (if $s$ is odd), or either a spike or a swirl (if $s$ is even) by a sequence of elementary quotients and elementary lifts. Unless otherwise specified, notation and terminology follows~\cite{ox11}.

\section{Preliminaries}
\label{prelims}

Throughout the paper, we say two sets $X$ and $Y$ {\em intersect} if $X\cap Y$ is non-empty; otherwise, $X$ and $Y$ {\em do not intersect}. For a positive integer $m$, we let $[m]$ denote the set $\{1, 2, \ldots, m\}$. Furthermore, \bloo{for} $i, j\in [m]$, we let $[i, j]$ denote the set $\{i, i+1, \ldots, j\}$ if $i\le j$ and the set $\{i, i+1, \ldots, m, 1, 2, \ldots, j\}$ if \bloo{$i > j$}. Now let $\sigma=(e_1, e_2, \ldots, e_n)$ be a cyclic ordering of $\{e_1, e_2, \ldots, e_n\}$. For all $i, j\in [n]$, the notation \bloo{$\sigma(i, j)$} denotes the set of elements $\{e_i, e_{i+1}, \ldots, e_j\}$, where subscripts are interpreted modulo $n$.

The following well-known lemma is used frequently in the paper. \bloo{The phrase {\em by orthogonality} signals an application of this lemma.}

\begin{lemma}
Let $M$ be a matroid. If $C$ is a circuit and $C^*$ is a cocircuit of $M$, then $|C\cap C^*|\neq 1$.
\end{lemma}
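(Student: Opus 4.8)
The plan is to prove the contrapositive-style statement directly: assuming $C$ is a circuit and $C^*$ is a cocircuit of a matroid $M$, show that $|C \cap C^*|$ cannot equal $1$. Suppose for contradiction that $C \cap C^* = \{e\}$ for some element $e$. The cleanest route is to work in the dual and use the fact that a cocircuit of $M$ is a circuit of $M^*$, then invoke the circuit elimination axiom, but it is more transparent to argue via rank/closure: a cocircuit $C^*$ is a minimal set whose complement $E(M) \setminus C^*$ is a hyperplane, i.e.\ a flat of rank $r(M)-1$.

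First I would set $H = E(M) \setminus C^*$, which is a hyperplane of $M$, so $r(H) = r(M) - 1$ and $H$ is closed. Since $C \cap C^* = \{e\}$, every element of $C$ other than $e$ lies in $H$; thus $C \setminus \{e\} \subseteq H$, and thereby $\operatorname{cl}(C \setminus \{e\}) \subseteq \operatorname{cl}(H) = H$. Now, because $C$ is a circuit, $e \in \operatorname{cl}(C \setminus \{e\})$ (the defining property that every element of a circuit lies in the closure of the rest). Hence $e \in H$. But $e \in C^* = E(M) \setminus H$, a contradiction. Therefore $|C \cap C^*| \neq 1$.

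The main obstacle — really the only thing to be careful about — is the degenerate possibility that $C \cap C^* = \emptyset$, which is of course permitted (the lemma only excludes intersection of size exactly one), so the argument must genuinely use $|C \cap C^*| = 1$ to locate the element $e$ and derive the contradiction; it does, via $e \in C^*$ forcing $e \notin H$. One should also note the standard conventions that circuits are nonempty (so $C \setminus \{e\}$ makes sense and $e$ is well-defined) and that $E(M) \setminus C^*$ being a hyperplane is exactly the definition of a cocircuit in~\cite{ox11}; with these in hand the proof is complete in a few lines.
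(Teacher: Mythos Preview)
Your proof is correct. The paper itself does not give a proof of this lemma; it is simply stated as a ``well-known'' fact (with the phrase \emph{by orthogonality} used throughout to signal its application), so there is nothing to compare against. Your argument via the hyperplane complement of a cocircuit and the closure property of circuits is one of the standard proofs and is entirely sound.
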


The next lemma concerns the independent sets of the $i$-th truncation of a matroid (see, for example, \cite[Proposition~7.3.10]{ox11}).

\begin{lemma}
Let $M$ be a matroid with $r(M)\ge 1$, and let $i$ be a non-negative integer such that $i\le r(M)$. Then
$$\mathcal I(T^i(M))=\{X\in \mathcal I(M): |X|\le r(M)-i\}.$$
\label{truncation1}
\end{lemma}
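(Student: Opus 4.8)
The statement to prove is the formula $\mathcal I(T^i(M)) = \{X \in \mathcal I(M) : |X| \le r(M) - i\}$ for a matroid $M$ with $r(M) \ge 1$ and $0 \le i \le r(M)$. Although this is cited from \cite[Proposition~7.3.10]{ox11}, I will sketch a self-contained induction on $i$. The base case $i = 0$ is immediate since $T^0(M) = M$ and the condition $|X| \le r(M)$ holds for every independent set. For the inductive step, assume the formula holds for $T^{i-1}(M)$, so that $\mathcal I(T^{i-1}(M)) = \{X \in \mathcal I(M) : |X| \le r(M) - (i-1)\}$; in particular $r(T^{i-1}(M)) = r(M) - (i-1) = r(M) - i + 1 \ge 1$, using $i \le r(M)$. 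It then suffices to understand a single truncation $T(N)$ of a matroid $N = T^{i-1}(M)$ with $r(N) \ge 1$, and show $\mathcal I(T(N)) = \{X \in \mathcal I(N) : |X| \le r(N) - 1\}$; composing with the inductive hypothesis gives the result, since $r(N) - 1 = r(M) - i$ and an independent set of $M$ of size at most $r(M) - i$ is automatically in $\mathcal I(N)$.

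\textbf{The single-truncation step.} Recall $T(N)$ is obtained by freely adding an element $f$ to $N$ — forming a matroid $N' = N + f$ on $E(N) \cup \{f\}$ — and then contracting $f$. Freely adding $f$ means $f$ is placed so that the only circuits of $N'$ containing $f$ are sets $\{f\} \cup C$ where $C$ is a minimal dependent-in-$N'$ subset of a hyperplane-sized flat; more concretely, a set $Y \subseteq E(N)$ together with $f$ is independent in $N'$ if and only if $Y$ is independent in $N$ and $|Y| \le r(N) - 1$ (so that $Y$ does not span $N$), while subsets of $E(N)$ keep their rank. Then for $X \subseteq E(N)$, we have $X \in \mathcal I(T(N)) = \mathcal I(N' / f)$ if and only if $X \cup \{f\} \in \mathcal I(N')$, which by the previous sentence holds exactly when $X \in \mathcal I(N)$ and $|X| \le r(N) - 1$. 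This is precisely the claimed description, and it also records that $r(T(N)) = r(N) - 1$. The one routine point to verify carefully is the characterisation of independence in the free extension $N'$: this follows from the standard fact that the free extension is the unique (loopless, coloop-free placement) extension of largest possible rank-spanning flexibility, i.e. $f$ lies on no flat of $N$ of rank less than $r(N)$, so adding $f$ to $Y$ increases rank iff $Y$ fails to span $N$.

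\textbf{Main obstacle and bookkeeping.} The only real subtlety is the hypothesis $i \le r(M)$: the formula can fail to make sense otherwise, and it is needed so that each intermediate matroid $T^{j}(M)$ for $j < i$ still has positive rank, allowing the truncation operation (as defined in the excerpt, which sets $T(M) = M$ only when $r(M) = 0$) to genuinely drop the rank by one at each stage. I would state explicitly at the start of the induction that $r(T^j(M)) = r(M) - j$ for $0 \le j \le i$, prove this alongside the independence formula, and note that because $i \le r(M)$ we have $r(T^{j}(M)) \ge r(M) - i + 1 \ge 1$ for every $j \le i - 1$, so the free extension in the definition of $T$ is always performed on a matroid of rank at least one. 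With that caveat handled, the rest is the short two-step argument above: reduce to a single truncation by induction, then compute $\mathcal I(N'/f)$ from the independence rule in the free extension $N'$. No deeper machinery is required.
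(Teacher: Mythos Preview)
Your proof is correct. The paper itself does not prove this lemma; it is stated as a standard fact with a reference to \cite[Proposition~7.3.10]{ox11} and no argument is given. Your self-contained induction on $i$, reducing to the single-truncation formula $\mathcal I(T(N)) = \{X \in \mathcal I(N) : |X| \le r(N)-1\}$ via the rank function of the free extension, is the natural way to verify it and matches the standard textbook treatment.
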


\section{Properties of Cyclic Matroids}
\label{props}

In this section, we establish various properties of nearly $(s, t)$-cyclic and $(s, t)$-cyclic matroids on $n$ elements. \bloo{The first lemma is used frequently in this section.}

\begin{lemma}
\label{adj_cocircuits}
\bloo{Let $M$ be an $(s, t)$-cyclic matroid on $n$ elements, where $n~>~s+t-2$, and let $\sigma = (e_1,e_2,\ldots,e_n)$ be an $(s,t)$-cyclic ordering of $M$. Then,
\begin{enumerate}[{\rm (i)}]
	\item if $\{e_i,e_{i+1},\ldots,e_{i+s-1}\}$ is a circuit, then $\{e_{i-t},e_{i-t+1},\ldots,e_{i-1}\}$ and $\{e_{i+s},e_{i+s+1},\ldots,e_{i+s+t-1}\}$ are cocircuits, and
	\item if $\{e_i,e_{i+1},\ldots,e_{i+t-1}\}$ is a cocircuit, then $\{e_{i-s},e_{i-s+1},\ldots,e_{i-1}\}$ and $\{e_{i+t},e_{i+t+1},\ldots,e_{i+s+t-1}\}$ are circuits.
\end{enumerate}}
\end{lemma}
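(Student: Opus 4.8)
The plan is to prove part (i) directly from the defining axioms (i)--(iv) of an $(s,t)$-cyclic ordering together with orthogonality (Lemma~2.1), and then obtain part (ii) by duality, since the dual of an $(s,t)$-cyclic matroid with ordering $\sigma$ is $(t,s)$-cyclic with ordering $\sigma$ (the roles of circuits and cocircuits swap). So it suffices to establish (i). First I would fix the circuit $C = \{e_i, e_{i+1}, \ldots, e_{i+s-1}\}$ and use axiom (ii) together with repeated application of axiom (iv) to locate the $t$-element cocircuits relative to $\sigma$: the cocircuits of the form $\sigma(j,j+t-1)$ occur at a fixed residue of $j$ modulo $2$, and every such window of that parity is a cocircuit. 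The key point is then to pin down which parity it is, using the interaction with the known circuit $C$.

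The main work is the following. Consider the window $D = \sigma(i-t, i-1)$, which ends immediately before $C$ begins, and the window $D' = \sigma(i+s, i+s+t-1)$, which begins immediately after $C$ ends; note that since $n > s+t-2$ these two windows, together with $C$, are genuinely ``spread out'' and in particular $D$ and $D'$ each meet $C$ in at most the boundary, and actually are disjoint from $C$. I want to show $D$ is a cocircuit (and symmetrically $D'$). Suppose instead that the cocircuit windows have the ``wrong'' parity, i.e.\ the cocircuits consecutive to $C$ are the shifted windows $\sigma(i-t+1, i)$ and $\sigma(i+s-1, i+s+t-2)$ (these are the two windows of the other parity adjacent to $C$). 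Each of these meets $C$ in exactly one element --- $\sigma(i-t+1,i)$ contains $e_i$ but none of $e_{i+1},\ldots,e_{i+s-1}$ (using $n > s+t-2$ to guarantee $e_{i-t+1},\ldots,e_{i-1}$ avoid $C$), and similarly $\sigma(i+s-1,i+s+t-2)$ contains only $e_{i+s-1}$ of $C$ --- contradicting orthogonality with the circuit $C$. Hence the cocircuit windows adjacent to $C$ must be exactly $D$ and $D'$, which is the claim. One has to be slightly careful that there actually is a cocircuit window in each of these two adjacent positions: this follows by applying axiom (iv) to step around $\sigma$ in increments of $2$ starting from the cocircuit guaranteed by axiom (ii), and checking the residues line up --- here the hypothesis $n > s+t-2$ (and the implicit parity facts noted in the introduction) ensures the windows $D$ and $D'$ are among those reached.

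I expect the main obstacle to be the bookkeeping with indices modulo $n$: verifying carefully that under the hypothesis $n > s+t-2$ the windows $\sigma(i-t,i-1)$, $C$, and $\sigma(i+s,i+s+t-1)$ occupy disjoint sets of positions (so the orthogonality argument against the ``shifted'' alternative really does produce an intersection of size exactly $1$, not $0$ or something larger that wraps around), and that the parity-stepping via axiom (iv) actually visits the relevant positions rather than skipping past them. Once the disjointness and the reachability are nailed down, the orthogonality contradiction is immediate and short. The duality step for (ii) is then a one-line remark, observing that $\sigma(i-s,i-1)$ and $\sigma(i+t,i+s+t-1)$ are exactly the images under this duality of the windows in (i).
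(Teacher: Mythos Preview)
Your approach is correct and essentially identical to the paper's: prove (i) by noting that axioms (ii) and (iv) force one of $\sigma(i-t,i-1)$ or $\sigma(i-t+1,i)$ to be a cocircuit, rule out the latter by orthogonality with $C$ (using $n>s+t-2$ to get a single-element intersection), argue symmetrically for $\sigma(i+s,i+s+t-1)$, and obtain (ii) by duality. One caution: do not lean on ``the implicit parity facts noted in the introduction'' ($n$ even, $s\equiv t\bmod 2$), since in the paper those are Lemma~3.3, which is proved \emph{after} and \emph{using} the present lemma; you do not actually need them, because stepping by $2$ via axiom (iv) from the cocircuit guaranteed by axiom (ii) visits at least one of any two consecutive starting positions regardless of the parity of $n$.
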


\begin{proof}
\bloo{We will prove (i). The proof of (ii) follows by duality as $M^*$ is a $(t,s)$-cyclic matroid. Since $\sigma$ is an $(s,t)$-cyclic ordering of $M$, it follows that one of $\{e_{i-t},e_{i-t+1},\ldots,e_{i-1}\}$ and $\{e_{i-t+1},e_{i-t+2},\ldots,e_i\}$ is a $t$-element cocircuit of $M$. But, as $n > s+t-2$, the set $\{e_{i-t+1},e_{i-t+2},\ldots,e_i\}$ intersects the circuit $\{e_i,e_{i+1},\ldots,e_{i+s-1}\}$ in one element, and so $\{e_{i-t+1},e_{i-t+2},\ldots,e_i\}$ is not a cocircuit. Therefore $\{e_{i-t},e_{i-t+1},\ldots,e_{i-1}\}$ is a cocircuit of $M$.} Similarly, $\{e_{i+s-1},e_{i+s},\ldots,e_{i+s+t-2}\}$ is not a cocircuit as it intersects $\{e_i,e_{i+1},\ldots,e_{i+s-1}\}$ in one element, and so $\{e_{i+s},e_{i+s+1},\ldots,e_{i+s+t-1}\}$ is a cocircuit.
\end{proof}

The next two lemmas consider the relationships amongst $s$, $t$, and $n$.

\begin{lemma}
\label{lower_bound}
Let $M$ be a nearly $(s, t)$-cyclic matroid on $n$ elements. Then $n\geq s + t - 2$.
\end{lemma}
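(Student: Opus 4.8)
The plan is to work directly with a nearly $(s,t)$-cyclic ordering $\sigma = (e_1, e_2, \ldots, e_n)$ of $M$ and derive a contradiction from the assumption that $n \le s+t-3$. By the implicit assumption recorded in the introduction, $n \ge \max\{s,t\}-1$, so there is at least one $s$-element circuit and at least one $t$-element cocircuit to work with; without loss of generality we may take $C = \sigma(1, s) = \{e_1, e_2, \ldots, e_s\}$ to be an $s$-element circuit of $M$ (after relabelling, any $s-1$ consecutive elements lie in such a circuit, and that circuit, being an $s$-set, is itself a set of $s$ consecutive elements up to a shift of the starting index — this uses $n > s-1$). The key idea is then to locate a $t$-element cocircuit $C^*$ consisting of $t$ consecutive elements whose intersection with $C$ has size exactly one, which contradicts orthogonality (\Cref{prelims}'s first lemma).

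First I would pin down which sets of $t$ consecutive elements are forced to be cocircuits. Since $\sigma$ is nearly $(s,t)$-cyclic, the $(t-1)$-set $\{e_{s}, e_{s+1}, \ldots, e_{s+t-2}\}$ (indices mod $n$) is contained in a $t$-element cocircuit $C^*$, which therefore consists of these $t-1$ elements together with one more element $e_j$. The main point is a counting argument: because $n \le s+t-3$, the "gap" in the cyclic order not covered by $\{e_s, \ldots, e_{s+t-2}\}$ has size $n - (t-1) \le s-2$, so this gap is a proper subset of $\{e_1, \ldots, e_{s-1}\} \subseteq C$ — in fact the gap consists of $e_{s+t-1}, e_{s+t}, \ldots, e_n$ read cyclically, and one checks these indices all lie in $\{1, 2, \ldots, s-1\}$ modulo $n$. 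Hence the extra element $e_j$ of $C^*$ lies in $C$. Now I examine $|C \cap C^*|$: the elements $e_s, e_{s+1}, \ldots$ that also lie in $C = \{e_1, \ldots, e_s\}$ are exactly those whose index (mod $n$) is in $[1,s]$; since $n \le s+t-3$, wrapping around, the set $\{e_s, \ldots, e_{s+t-2}\}$ meets $C$ in $e_s$ together with possibly $e_{s+1}, \ldots$ that have wrapped past $e_n$ back into $\{e_1, \ldots, e_{s-1}\}$. I would compute $|C \cap C^*|$ precisely in terms of $n, s, t$ and show it is forced to equal $1$ for at least one admissible choice — the flexibility being in the choice of which $(t-1)$-block I start from (any shift $\{e_k, e_{k+1}, \ldots, e_{k+t-2}\}$ works) and the unknown position of the extra element $e_j$.

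The delicate part — and the step I expect to be the main obstacle — is handling the wrap-around bookkeeping so that the intersection size is controlled to be exactly $1$ rather than $0$ or $\ge 2$, uniformly over the edge cases where $n$ is close to $s+t-3$ or where $s$ and $t$ are small. A clean way to organise this: pick the $(t-1)$-block to be $B = \{e_{s+1}, e_{s+2}, \ldots, e_{s+t-1}\}$ and let $C^* \supseteq B$ be a $t$-element cocircuit, $C^* = B \cup \{e_j\}$. Since $n \le s+t-3 < s+t-1$, the indices $s+1, \ldots, s+t-1$ reduced mod $n$ sweep past $n$ and re-enter at $1$; I claim $B \cap C = \{e_1, e_2, \ldots, e_{(s+t-1) - n}\}$, a set of size $s+t-1-n \ge 2$, which together with $e_j$ (if $e_j \in C$, which as above is forced when the complementary gap is small) would give $|C \cap C^*| \ge 2$ — consistent with orthogonality, so this block is the wrong choice. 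Instead one should choose the $(t-1)$-block so that it meets $C$ in exactly one element or in exactly $t-1$ elements with the extra element outside; a case analysis on the residue of $n$ against $s$ and $t$ will isolate a block $\{e_k, \ldots, e_{k+t-2}\}$ for which the containing $t$-cocircuit is forced to meet $C$ in a single element. The cases are: (a) $t-1 \ge s$, where a block can be chosen with $C \subsetneq C^*$ impossible by sizes yet meeting in one element by placing $e_k = e_2$; and (b) $t - 1 < s$, handled symmetrically or by passing to the dual (note $M^*$ is nearly $(t,s)$-cyclic via the same $\sigma$, so we may assume $t \le s$ if convenient). I would carry out this two- or three-case split, in each case exhibiting a consecutive $(t-1)$-block whose $t$-cocircuit-extension necessarily intersects the fixed $s$-circuit $C$ in exactly one element, contradicting orthogonality and completing the proof.
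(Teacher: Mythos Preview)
Your approach has a genuine gap right at the outset, and it also misses a much simpler route. The paper's proof is three lines: since $M$ has an $s$-element circuit, $r(M)\ge s-1$; since $M$ has a $t$-element cocircuit, $r^*(M)\ge t-1$; hence $n=r(M)+r^*(M)\ge s+t-2$. No cyclic structure, no orthogonality, no case analysis.

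The error in your argument is the claim that the $s$-element circuit $C$ containing the $s-1$ consecutive elements $\{e_1,\ldots,e_{s-1}\}$ is ``itself a set of $s$ consecutive elements up to a shift of the starting index.'' This is false in general: the definition of \emph{nearly} $(s,t)$-cyclic only guarantees that $s-1$ consecutive elements lie in some $s$-circuit, and the remaining element of that circuit can be anywhere in $E(M)$. Indeed, forcing these circuits to be consecutive is precisely the content of \Cref{par_to_tot}, which requires $n$ to be much larger than $s+t-2$ and whose proof occupies all of Section~\ref{cyclicthm}. So you cannot assume $C=\sigma(1,s)$; you only know $C=\{e_1,\ldots,e_{s-1}\}\cup\{c\}$ for some unknown $c$. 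Once $c$ is uncontrolled, the wrap-around bookkeeping you describe for locating a cocircuit meeting $C$ in exactly one element does not go through, and the case split you sketch (which you have not actually carried out) would need to track the position of $c$ as well --- at which point the argument, if salvageable at all, is vastly more work than the rank inequality.
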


\begin{proof}
Since $M$ contains an $s$-element circuit, we have that $r(M) \geq s-1$. Similarly, as $M$ contains a $t$-element cocircuit, $r^*(M) \geq t-1$. Therefore, as $n = r(M) + r^*(M)$, we also have that $n \geq s+t-2$.
\end{proof}

Note that the bound in \Cref{lower_bound} is tight. In particular, for any positive integers $s, t\ge 2$, the uniform matroid $U_{s-1, s+t-2}$ is nearly $(s, t)$-cyclic. In fact, $U_{s-1, s+t-2}$ is $(s, t)$-cyclic.

\begin{lemma}
\label{even_n}
Let $M$ be an $(s, t)$-cyclic matroid on $n$ elements. If $n\left.>s+t-2\right.$, then
\begin{enumerate}[{\rm (i)}]
\item $n$ is even, and

\item $s\equiv t \bmod{2}$.
\end{enumerate}
\end{lemma}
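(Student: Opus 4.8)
The plan is to exploit the ``shift by two'' structure in parts (iii) and (iv) of the definition of an $(s,t)$-cyclic ordering together with \Cref{adj_cocircuits}. First I would fix an $(s,t)$-cyclic ordering $\sigma=(e_1,e_2,\ldots,e_n)$ of $M$ and, without loss of generality using (i), assume that $C_0=\{e_1,e_2,\ldots,e_s\}=\sigma(1,s)$ is an $s$-element circuit. Iterating (iii), every set of the form $\sigma(1+2k,\,s+2k)$ is an $s$-element circuit, with subscripts modulo $n$. The key observation is that this family of circuits returns to $C_0$ after a number of shifts governed by $\gcd(n,2)$: the shift operation $e_i\mapsto e_{i+2}$ generates a cyclic group acting on $\mathbb{Z}/n\mathbb{Z}$ whose orbit through $1$ has size $n/\gcd(n,2)$, so the indices appearing as the ``first element'' of one of these circuits are exactly the residues in $\{1,3,5,\ldots\}$ if $n$ is even, or \emph{all} of $\mathbb{Z}/n\mathbb{Z}$ if $n$ is odd.

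The heart of the argument is to show $n$ must be even; part (ii) should then follow quickly. For (i), suppose for contradiction that $n$ is odd. Then, by the orbit computation above, \emph{every} translate $\sigma(i,i+s-1)$ for $i\in[n]$ is an $s$-element circuit. In particular both $\sigma(1,s)$ and $\sigma(2,s+1)$ are circuits, and more generally $\sigma(i,i+s-1)$ and $\sigma(i+1,i+s)$ are circuits for all $i$. Applying \Cref{adj_cocircuits}(i) to the circuit $\sigma(i,i+s-1)$ tells us $\sigma(i-t,i-1)$ is a cocircuit; applying it to $\sigma(i+1,i+s)$ tells us $\sigma(i+1-t,i)$ is a cocircuit. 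These two cocircuits differ by a shift of one position, so by the analogous orbit argument for cocircuits (using (iv)), \emph{every} translate $\sigma(j,j+t-1)$ is a $t$-element cocircuit as well. Now I would derive a contradiction with orthogonality: pick a circuit $\sigma(1,s)$ and a cocircuit $\sigma(j,j+t-1)$ that meet in exactly one element. Since $n>s+t-2$, there is enough room on the cycle to position a length-$t$ arc so that it overlaps the length-$s$ arc $\{e_1,\ldots,e_s\}$ in precisely one endpoint (for instance the cocircuit ending at $e_1$, namely $\sigma(2-t,1)$, meets $\sigma(1,s)$ only in $e_1$ because $n>s+t-2$ prevents wraparound overlap at the other end). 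This contradicts \Cref{prelims}'s orthogonality lemma, so $n$ is even.

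For (ii), now knowing $n$ is even, the circuits obtained by iterating (iii) from $C_0=\sigma(1,s)$ are exactly $\{\sigma(2k-1,\,2k+s-2):k\in[n/2]\}$, i.e.\ those starting at an odd index. By \Cref{adj_cocircuits}(i), the cocircuit $\sigma(i-t,i-1)$ lies immediately before each such circuit $\sigma(i,i+s-1)$; taking $i=1$ we get the cocircuit $\sigma(1-t,-t+s)$... more cleanly, \Cref{adj_cocircuits}(i) gives that $\sigma(i+s,\,i+s+t-1)$ is a cocircuit whenever $\sigma(i,i+s-1)$ is a circuit. Starting from $i=1$, the cocircuit $\sigma(s+1,\,s+t)$ appears, and iterating, cocircuits start at indices $s+1,s+3,s+5,\ldots$, all congruent to $s+1\pmod 2$. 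On the other hand, by (ii) of the definition, one of $\sigma(1,t)$ or $\sigma(2,t+1)$ is a cocircuit, so \emph{some} cocircuit starts at index $1$ or $2$, hence at a specified parity; iterating (iv) from it, the cocircuit-start indices form one parity class mod $2$ (again using $n$ even). Comparing the two descriptions of the set of cocircuit-start indices forces $s+1\equiv$ (the parity of $1$ or $2$) appropriately; chasing this through, and using that the circuit-start indices are all odd while the cocircuit-start indices must be consistent with both the $+(s+t)$ shift from circuits and the independent $\pm 1$-parity class, pins down $s\equiv t\pmod 2$. The main obstacle I anticipate is the bookkeeping in this last parity comparison: one has to be careful that the cocircuits coming from \Cref{adj_cocircuits} and the cocircuits guaranteed directly by (ii)/(iv) are the \emph{same} family (not two interleaved families), which is exactly where orthogonality with the circuits—and the hypothesis $n>s+t-2$—is needed to rule out a length-$t$ cocircuit sitting at ``the wrong parity'' relative to the circuits.
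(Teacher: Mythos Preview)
Your argument for (i) is correct and is the paper's argument: when $n$ is odd the shift-by-two orbit covers every index, so every consecutive $s$-set is a circuit, and then \Cref{adj_cocircuits} produces a consecutive $t$-cocircuit meeting one of those circuits in a single element, violating orthogonality. The paper is just more economical: it never shows that \emph{every} consecutive $t$-set is a cocircuit, only the single cocircuit $\sigma(i-t,i-1)$ coming from \Cref{adj_cocircuits}, which already meets the shifted circuit $\sigma(i-1,i+s-2)$ in exactly $\{e_{i-1}\}$.

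For (ii) there is a real gap. From ``cocircuit-start indices lie in the parity class of $s+1$'' you cannot conclude anything about $t$; the parameter $t$ only enters if you invoke \Cref{adj_cocircuits}(ii) in the \emph{reverse} direction (a cocircuit at $j$ forces a circuit at $j+t$), and you never actually write that step. You would also need to verify that the circuit-start indices form a \emph{single} parity class (not both), since otherwise the comparison says nothing; this follows from orthogonality but has to be checked. The paper bypasses all of this bookkeeping with a one-line computation. Assume $s\not\equiv t\pmod 2$ and fix a circuit $\sigma(i,i+s-1)$. By \Cref{adj_cocircuits}(i), $\sigma(i-t,i-1)$ is a cocircuit. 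Since $s+t-1$ is even, iterating condition~(iv) of the definition shifts this cocircuit forward by $s+t-1$ to give the cocircuit
\[
\sigma\bigl((i-t)+(s+t-1),\,(i-1)+(s+t-1)\bigr)=\sigma(i+s-1,\,i+s+t-2).
\]
Because $n>s+t-2$, this cocircuit meets the original circuit $\sigma(i,i+s-1)$ in the single element $e_{i+s-1}$, contradicting orthogonality. One explicit even shift by $s+t-1$ replaces your entire parity-class comparison and does not even require part~(i).
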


\begin{proof}
Suppose $n > s+t-2$. To prove (i), assume that $n$ is odd. Let $\left.\sigma = (e_1, e_2, \ldots, e_n)\right.$ be an $(s, t)$-cyclic ordering of $M$, and let $\{e_i, e_{i+1}, \ldots, e_{i+s-1}\}$ be a circuit of $M$. Then, for all even $k$, the set $\{e_{i+k}, e_{i+1+k}, \ldots, e_{i+s-1+k}\}$ is a circuit of $M$. \bloo{In particular, taking $k=n-1$, the set $\{e_{i-1}, e_i, \ldots, e_{i+s-2}\}$ is a circuit of $M$. But, by \Cref{adj_cocircuits}, the set $\{e_{i-t},e_{i-t+1},\ldots,e_{i-1}\}$ is a cocircuit of $M$, and, since $n > s+t-2$, this cocircuit intersects the circuit $\{e_{i-1},e_i,\ldots,e_{i+s-2}\}$ in one element. This contradiction implies $n$ is even.}

For the proof of (ii), assume that $s \not\equiv t \bmod{2}$. Let $\sigma=(e_1, e_2, \ldots, e_n)$ be an $(s, t)$-cyclic ordering of $M$, and let $\{e_i, e_{i+1}, \ldots, e_{i+s-1}\}$ be a circuit of $M$. \bloo{By \Cref{adj_cocircuits}, the set $\{e_{i-t},e_{i-t+1},\ldots,e_{i-1}\}$ is a cocircuit of $M$. By the assumption, $s+t-1$ is even and so, as $(i-t)+(s+t-1)=i+s-1$, the set $\{e_{i+s-1},e_{i+s},\ldots,e_{i+s+t-2}\}$ is a cocircuit. But this cocircuit intersects $\{e_i, e_{i+1}, \ldots, e_{i+s-1}\}$ in precisely one element, contradicting orthogonality.} Therefore, $s \equiv t \bmod{2}$, completing the proof of (ii).
\end{proof}

The bound in \Cref{even_n} is tight. For example, choosing one of $s$ and $t$ to be even and the other to be odd, the uniform matroid $U_{s-1, s+t-2}$ is an $(s, t)$-cyclic matroid on $s+t-2$ elements. However, \Cref{even_n} shows that there is no $(s, t)$-cyclic matroid with more elements.

Generalising~\cite[Lemma~4.3, Lemma~5.1, Lemma~5.3]{bre19b}, the next four lemmas concern the independent sets, closure operator, and rank function of $(s, t)$-cyclic matroids. A consequence of the first of these lemmas is that if $s=t$ and $s$ is even, then the $s$-element circuits and $s$-element cocircuits in an $(s, s)$-cyclic ordering of a matroid coincide. \bloo{On the other hand}, if $s=t$ and $s$ is odd, then the $s$-element circuits and $s$-element cocircuits in an $(s, s)$-cyclic ordering of a matroid behave like the $3$-element circuits and $3$-element cocircuits in $(3, 3)$-cyclic orderings of whirls.

\begin{lemma}
\label{structure}
Let $M$ be an $(s, t)$-cyclic matroid on $n$ elements, where $\left.n > s+t-2\right.$, and let $\sigma=(e_1, e_2, \ldots, e_n)$ be an $(s, t)$-cyclic ordering of $M$. Suppose that $\{e_i, e_{i+1}, \ldots, e_{i+s-1}\}$ is a circuit of $M$. If $s$ and $t$ are even, then
\begin{enumerate}[{\rm (i)}]
\item $\{e_i, e_{i+1}, \ldots, e_{i+t-1}\}$ is a cocircuit,

\item $\{e_{i+1}, e_{i+2}, \ldots, e_{i+s}\}$ is independent, and

\item $\{e_{i+1}, e_{i+2}, \ldots, e_{i+t}\}$ is coindependent. 
\end{enumerate}
Furthermore, if $s$ and $t$ are odd, then
\begin{enumerate}
\item[{\rm (iv)}] $\{e_{i+1},e_{i+2}, \ldots, e_{i+t}\}$ is a cocircuit,

\item[{\rm (v)}] $\{e_{i+1}, e_{i+2}, \ldots, e_{i+s}\}$ is independent, and

\item[{\rm (vi)}] $\{e_i,e_{i+1},\ldots,e_{i+t-1}\}$ is coindependent.
\end{enumerate}
\end{lemma}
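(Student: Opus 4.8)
The plan is to prove the six parts in three stages, treating the even and odd cases in parallel. Since $n>s+t-2$, \Cref{even_n} gives that $n$ is even and $s\equiv t\pmod 2$, so exactly one of the two cases in the statement applies; I will also use that $M^*$ is a $(t,s)$-cyclic matroid with $(t,s)$-cyclic ordering $\sigma$. First I would handle the cocircuit statements (i) and (iv). Because $\{e_i,\ldots,e_{i+s-1}\}$ is a circuit, \Cref{adj_cocircuits} gives that $\{e_{i-t},\ldots,e_{i-1}\}$ is a cocircuit, and applying condition~(iv) of the definition of an $(s,t)$-cyclic ordering $m$ times shows that $\{e_{i-t+2m},\ldots,e_{i-t+2m+t-1}\}$ is a cocircuit for every integer $m\ge 0$. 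When $s$ and $t$ are even, $m=t/2$ is an integer and the resulting cocircuit is $\{e_i,\ldots,e_{i+t-1}\}$, which is (i); when $s$ and $t$ are odd, $m=(t+1)/2$ is an integer and the resulting cocircuit is $\{e_{i+1},\ldots,e_{i+t}\}$, which is (iv).

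The independence statements (ii) and (v) coincide, and I would prove the following claim, which also yields (iii) and (vi) by duality: \emph{if $N$ is an $(s,t)$-cyclic matroid on $n>s+t-2$ elements with $(s,t)$-cyclic ordering $(f_1,\ldots,f_n)$ and $\{f_j,\ldots,f_{j+s-1}\}$ is a circuit of $N$, then $\{f_{j+1},\ldots,f_{j+s}\}$ is independent.} Suppose not. By condition~(iii) of the definition, $\{f_{j+2},\ldots,f_{j+s+1}\}$ is also a circuit, so both $\{f_{j+1},\ldots,f_{j+s-1}\}$ and $\{f_{j+2},\ldots,f_{j+s}\}$ are independent of size $s-1$, being proper subsets of circuits. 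As $\{f_{j+1},\ldots,f_{j+s}\}$ is dependent, $f_{j+1}\in\cl(\{f_{j+2},\ldots,f_{j+s}\})$, hence $\cl(\{f_{j+1},\ldots,f_{j+s}\})=\cl(\{f_{j+2},\ldots,f_{j+s}\})$; and since $\{f_{j+1},\ldots,f_{j+s-1}\}$ spans the circuit $\{f_j,\ldots,f_{j+s-1}\}$, the element $f_j$ lies in $\cl(\{f_{j+1},\ldots,f_{j+s-1}\})\subseteq\cl(\{f_{j+2},\ldots,f_{j+s}\})$. So there is a circuit $C$ of $N$ with $f_j\in C\subseteq\{f_j,f_{j+2},f_{j+3},\ldots,f_{j+s}\}$. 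On the other hand, \Cref{adj_cocircuits} together with one application of condition~(iv) shows that $C^*:=\{f_{j-t+2},\ldots,f_{j+1}\}$ is a cocircuit, and since $n>s+t-2$ the index sets of $C$ and of $C^*$ lie in cyclic intervals meeting only in $\{j,j+1\}$; as $f_{j+1}\notin C$, this forces $C\cap C^*=\{f_j\}$, contradicting orthogonality. Applying the claim to $M$ gives (ii) and (v).

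Finally, for (iii) and (vi) I would apply the claim to $M^*$, with the roles of $s$ and $t$ interchanged: this says that whenever $\{e_j,\ldots,e_{j+t-1}\}$ is a $t$-element cocircuit of $M$ (equivalently, a $t$-element circuit of $M^*$), the set $\{e_{j+1},\ldots,e_{j+t}\}$ is independent in $M^*$, that is, coindependent in $M$. In the even case, statement~(i) provides the cocircuit $\{e_i,\ldots,e_{i+t-1}\}$, giving (iii); in the odd case, the cocircuit $\{e_{i-1},\ldots,e_{i+t-2}\}$ (the $m=(t-1)/2$ instance from the first stage) gives (vi). I expect the one genuinely fiddly point to be the parity and index bookkeeping: checking that each shift of a cocircuit lands precisely on the set named in the statement, and that the hypothesis $n>s+t-2$ is exactly what keeps the circuit $C$ and the cocircuit $C^*$ from sharing a second element of the cyclic order. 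The remaining steps are routine manipulation of closure and orthogonality.
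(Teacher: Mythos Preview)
Your proposal is correct and follows the same overall architecture as the paper: use \Cref{adj_cocircuits} and shifts by~2 for the cocircuit statements (i) and (iv), prove (ii)/(v) directly, and obtain (iii)/(vi) by applying the independence claim to $M^*$.

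The one place you diverge from the paper is the argument for (ii)/(v). The paper's proof is shorter: if $\{e_{i+1},\ldots,e_{i+s}\}$ contained a circuit~$C$, then either $e_{i+s}\notin C$, in which case $C$ is a proper subset of the circuit $\{e_i,\ldots,e_{i+s-1}\}$, or $e_{i+s}\in C$, in which case $C$ meets the cocircuit $\{e_{i+s},\ldots,e_{i+s+t-1}\}$ (supplied by \Cref{adj_cocircuits}) in the single element $e_{i+s}$. Your argument instead invokes the second circuit $\{f_{j+2},\ldots,f_{j+s+1}\}$, performs a closure manipulation to produce a circuit through $f_j$ avoiding $f_{j+1}$, and then uses the cocircuit $\{f_{j-t+2},\ldots,f_{j+1}\}$ on the other side. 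Both work, but the paper's version avoids the closure detour and the need to name the shifted circuit and cocircuit; your version, on the other hand, makes explicit the ``alternating'' structure around the given circuit, which is a pleasant byproduct even if not needed here.
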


\begin{proof}
\bloo{By \Cref{adj_cocircuits}, the set $\{e_{i-t},e_{i-t+1},\ldots,e_{i-1}\}$ is a cocircuit of $M$.} If $t$ is even, this implies $\{e_i,e_{i+1},\ldots,e_{i+t-1}\}$ is a cocircuit; otherwise, $t$ is odd and $\{e_{i+1},e_{i+2},\ldots,e_{i+t}\}$ is a cocircuit. 	
	
We next show that $\{e_{i+1},e_{i+2},\ldots,e_{i+s}\}$ is independent. Suppose this is not the case. Then $\{e_{i+1}, e_{i+2}, \ldots, e_{i+s}\}$ contains a circuit, call it $C$. \bloo{By \Cref{adj_cocircuits}, the set $\{e_{i+s}, e_{i+s+1}, \ldots, e_{i+s+t-1}\}$ is a cocircuit of $M$}. Therefore, if $e_{i+s} \in C$, then $C$ intersects $\{e_{i+s},e_{i+s+1},\ldots,e_{i+s+t-1}\}$ in exactly one element, a contradiction. But if $e_{i+s} \notin C$, then $C$ is properly contained in the circuit $\{e_i,e_{i+1},\ldots,e_{i+s-1}\}$, another contradiction. Thus, no such circuit $C$ exists, and so $\{e_{i+1},e_{i+2},\ldots,e_{i+s}\}$ is independent. \bloo{We have shown that, if $\{e_i,e_{i+1},\ldots,e_{i+s-1}\}$ is a circuit, then $\{e_{i+1},e_{i+2},\ldots,e_{i+s}\}$ is independent. Since $M^*$ is a $(t,s)$-cyclic matroid, this implies that if $\{e_j,e_{j+1},\ldots,e_{j+t-1}\}$ is a cocircuit, then $\{e_{j+1},e_{j+2},\ldots,e_{j+1}\}$ is coindependent. This is sufficient to show (iii) and (vi) and complete the proof.}
\end{proof}

\begin{lemma}
\label{closure}
Let $M$ be an $(s, t)$-cyclic matroid on $n$ elements, where $n > s+t-2$, and let $\sigma = (e_1, e_2, \ldots, e_n)$ be an $(s, t)$-cyclic ordering of $M$. Then, for all $i \in [n]$ and \blue{$s-1 \leq k \leq n-t$},
\begin{enumerate}[{\rm (i)}]
\item $e_{i+k} \in \cl(\{e_i, e_{i+1}, \ldots, e_{i+k-1}\})$ if and only if
\[\{e_{i+k-s+1}, e_{i+k-s+2}, \ldots, e_{i+k}\}\]
is a circuit, and
\item $e_{i-1} \in \cl(\{e_i, e_{i+1}, \ldots, e_{i+k-1}\})$ if and only if 
\[\{e_{i-1}, e_i, \ldots, e_{i+s-2}\}\]
 is a circuit.
\end{enumerate}
\end{lemma}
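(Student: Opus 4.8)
The plan is to reduce both parts to a single orthogonality argument, after first recording which $s$-element windows of $\sigma$ are circuits. Call this the \emph{parity structure}: since $n$ is even (by \Cref{even_n}) and conditions~(i) and~(iii) in the definition of an $(s,t)$-cyclic ordering propagate a single $s$-element circuit around the whole cycle in steps of two, there is a residue $c$ modulo~$2$ such that $\sigma(j,j+s-1)$ is a circuit for every $j\equiv c$; moreover, \Cref{structure}(ii) (when $s$ and $t$ are even) or \Cref{structure}(v) (when they are odd) shows that $\sigma(j+1,j+s)$ is then independent, so the same propagation makes every $s$-element window with starting index $\not\equiv c\pmod 2$ independent. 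Dualising — $M^{*}$ is $(t,s)$-cyclic — every $t$-element window of $\sigma$ is either a cocircuit of $M$ or coindependent, and by \Cref{adj_cocircuits}(i) the cocircuit windows begin exactly $s$ steps after the circuit windows end; that is, $\sigma(m,m+t-1)$ is a cocircuit of $M$ precisely when $m\equiv c+s\pmod 2$.

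Granting this, part~(i) is short. If $\sigma(i+k-s+1,i+k)$ is a circuit, then, because $k\ge s-1$, its first $s-1$ elements lie in $\{e_i,\ldots,e_{i+k-1}\}$, and hence $e_{i+k}\in\cl(\{e_i,\ldots,e_{i+k-1}\})$. Conversely, suppose $\sigma(i+k-s+1,i+k)$ is not a circuit. By the parity structure it is an independent $s$-element window, so $i+k-s+1\not\equiv c$, whence $i+k\equiv c+s\pmod 2$ and $C^{*}:=\sigma(i+k,i+k+t-1)$ is a cocircuit of $M$. If $e_{i+k}$ were in $\cl(\{e_i,\ldots,e_{i+k-1}\})$, there would be a circuit $C$ with $e_{i+k}\in C\subseteq\sigma(i,i+k)$; but $k\le n-t$ forces $\sigma(i,i+k)\cap C^{*}=\{e_{i+k}\}$, so $C\cap C^{*}=\{e_{i+k}\}$, contradicting orthogonality. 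Part~(ii) is the mirror image (equivalently, part~(i) for the reversal of $\sigma$, which is again an $(s,t)$-cyclic ordering): the ``if'' direction is as above, and for the ``only if'' direction, if $\sigma(i-1,i+s-2)$ is not a circuit it is an independent $s$-element window, so $i\equiv c$, hence $i-t\equiv c+s\pmod 2$ and $\sigma(i-t,i-1)$ is a cocircuit, which meets any circuit through $e_{i-1}$ that is contained in $\sigma(i-1,i+k-1)$ only in the element $e_{i-1}$.

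I expect the real work to be in setting up the parity structure carefully: verifying that the step-of-two propagation in~(iii) (and in~(iv) for cocircuits) genuinely covers a full residue class modulo~$2$ once $n$ is even, and that \Cref{structure} forbids an $s$-element window from being a circuit at both parities. After that, both directions of both parts are immediate. The hypothesis $k\le n-t$ is used exactly to make the two relevant windows overlap in a single element, so that orthogonality applies; the hypothesis $k\ge s-1$ is needed only to place the short window $\sigma(i+k-s+1,i+k-1)$ (respectively $\sigma(i,i+s-2)$) inside $\{e_i,\ldots,e_{i+k-1}\}$ in the easy direction.
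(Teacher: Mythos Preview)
Your argument is correct and is essentially the paper's proof: both locate a cocircuit $\sigma(i+k,i+k+t-1)$ whenever $\sigma(i+k-s+1,i+k)$ fails to be a circuit, then apply orthogonality to any circuit through $e_{i+k}$ contained in $\sigma(i,i+k)$, and obtain (ii) from (i) by reversing $\sigma$. The only difference is presentational---you package the dichotomy ``circuit at one parity, independent at the other (and dually for cocircuits)'' as an explicit preliminary, whereas the paper derives the needed cocircuit on the fly from \Cref{structure} via an even/odd case split; one small slip: the cocircuit window begins one step after the circuit window ends (equivalently $s$ steps after it begins), not ``$s$ steps after the circuit windows end,'' though your formal conclusion $m\equiv c+s\pmod 2$ is correct.
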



\begin{proof}
We will prove (i). \bloo{Then (ii) follows from the fact that reversing the order of $\sigma$ gives another $(s,t)$-cyclic ordering of $M$}. Since $k \geq s-1$, if $\{e_{i+k-s+1},e_{i+k-s+2},\ldots,e_{i+k}\}$ is a circuit, then $e_{i+k} \in \cl(\{e_i,e_{i+1},\ldots,e_{i+k-1}\})$. Conversely, suppose $e_{i+k} \in \cl(\{e_i,e_{i+1},\ldots,e_{i+k-1}\})$. Then there exists a circuit $C$ contained in $\{e_i,e_{i+1},\ldots,e_{i+k}\}$ such that $C$ contains $e_{i+k}$. Assume $\{e_{i+k-s+1},e_{i+k-s+2},\ldots,e_{i+k}\}$ is not a circuit. If $s$ and $t$ are even, then, by \Cref{structure}, the set $\{e_{i+k-s},e_{i+k-s+1},\ldots,e_{i+k-s+t-1}\}$ is a cocircuit and \bloo{so}, as $s$ is even, the set $\{e_{i+k},e_{i+k+1},\ldots,e_{i+k+t-1}\}$ is also a cocircuit. Since $k \leq n-t$, this last cocircuit intersects $C$ only in the element $e_{i+k}$, a contradiction. Therefore, $\{e_{i+k-s+1},e_{i+k-s+2},\ldots,e_{i+k}\}$ is a circuit. Similarly, if $s$ and $t$ are odd, then, by \Cref{structure}, $\{e_{i+k-s+1}, e_{i+k-s+2}, \ldots, e_{i+k-s+t}\}$ is a cocircuit, which means $\{e_{i+k},e_{i+k+1},\ldots,e_{i+k+t-1}\}$ is also a cocircuit. Again, this contradicts orthogonality with $C$, showing that $\{e_{i+k-s+1},e_{i+k-s+2},\ldots,e_{i+k}\}$ is a circuit, and completing the proof of the lemma.
\end{proof}

\begin{lemma}
\label{set_rank}
Let $M$ be an $(s, t)$-cyclic matroid on $n$ elements, where $\left.n > s+t-2\right.$, and let $\sigma = (e_1, e_2, \ldots, e_n)$ be an $(s, t)$-cyclic ordering of $M$. Then, for all $i \in [n]$ and $1\le k \leq n-t+1$,
\begin{align*}
r(\{e_i, e_{i+1}, \ldots, & e_{i+k-1}\}) =
\begin{cases}
k, & \mbox{if $k < s$}; \\
\floor*{\frac{s+k-1}{2}}, & \mbox{if $k\geq s$ and $\{e_i, e_{i+1}, \ldots, e_{i+s-1}\}$} \\
& \qquad \mbox{is a circuit}; \\
\ceil*{\frac{s+k-1}{2}}, & \mbox{if $k\geq s$ and $\{e_i, e_{i+1}, \ldots, e_{i+s-1}\}$} \\
& \qquad \mbox{is not a circuit}.
\end{cases}
\end{align*}
\end{lemma}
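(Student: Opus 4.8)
The plan is to prove the rank formula in \Cref{set_rank} by induction on $k$. The base cases $1 \le k < s$ are immediate: since the smallest circuit of $M$ among consecutive elements has $s$ elements (by \Cref{structure}(ii)/(v), the set $\sigma(i, i+s-1)$ when it is not a circuit is independent, and any shorter consecutive set is contained in an independent consecutive set of size $s$), every set of fewer than $s$ consecutive elements is independent, so its rank equals its cardinality. The case $k = s$ splits according to whether $\sigma(i, i+s-1)$ is a circuit: if it is, the rank is $s - 1 = \floor{\frac{s + s - 1}{2}}$ when $s$ is... wait, one must check the floor/ceiling arithmetic, but $\floor{\frac{2s-1}{2}} = s-1$ and $\ceil{\frac{2s-1}{2}} = s$, matching the circuit and non-circuit cases respectively. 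So the base cases are routine.

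\textbf{Inductive step.} Assume the formula holds for $k - 1$ (with $k \ge s + 1$, so $k - 1 \ge s$), and consider $X = \sigma(i, i+k-1)$. Write $X = X' \cup \{e_{i+k-1}\}$ where $X' = \sigma(i, i+k-2)$. The key tool is \Cref{closure}(i): with the index shift, $e_{i+k-1} \in \cl(X')$ if and only if $\sigma(i+k-s, i+k-1)$ is a circuit. Now I would use the $(s,t)$-cyclic structure to track which of the length-$s$ consecutive windows ending near position $i+k-1$ are circuits: by properties (iii) of the definition of $(s,t)$-cyclic, the circuits among consecutive $s$-sets, once one is fixed, occur exactly at every second starting position. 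Thus as the window $\sigma(\cdot, i+k-1)$ slides by one element, whether it is a circuit alternates, so $r(X) = r(X') + 1$ if $\sigma(i+k-s, i+k-1)$ is not a circuit, and $r(X) = r(X')$ if it is a circuit. Combining this with the inductive value of $r(X')$ and checking that $\floor{\frac{s+k-1}{2}}$ and $\ceil{\frac{s+k-1}{2}}$ increment correctly as $k$ increases (the floor increments on every second step, synchronised with the non-circuit windows) yields the claimed formula. One subtlety: the parity bookkeeping must be anchored to whether $\sigma(i, i+s-1)$ itself is a circuit, since that determines the parity class of all the circuit-windows, and hence whether we land in the $\floor{\cdot}$ branch or the $\ceil{\cdot}$ branch; this needs the hypothesis $k \le n - t + 1$ so that \Cref{closure} applies (its range is $s - 1 \le k \le n - t$, applied with parameter $k - 1$).

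\textbf{The main obstacle} I expect is the parity synchronisation: making precise that the windows $\sigma(i+j, i+j+s-1)$ that are circuits are exactly those with $j$ of a fixed parity (determined by whether $\sigma(i,i+s-1)$ is a circuit), and that this parity is exactly the one making $\floor{\frac{s+k-1}{2}}$ jump at the right steps. Property (iii) of the $(s,t)$-cyclic definition only tells us a circuit at position $j$ forces a circuit at position $j+2$; one must argue there is no circuit at position $j+1$ — this follows from orthogonality with the cocircuits supplied by \Cref{adj_cocircuits}, exactly as in the proof of \Cref{closure}. So the real work is an organised induction where, at each step, \Cref{closure} decides whether rank goes up, and a separate lemma-like observation (essentially already inside the proofs of \Cref{structure} and \Cref{closure}) pins down the alternation pattern; after that, verifying that $k, \lfloor\tfrac{s+k-1}{2}\rfloor$, and $\lceil\tfrac{s+k-1}{2}\rceil$ behave as required is a routine parity check that I would not write out in full.
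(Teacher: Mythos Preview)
Your proposal is correct and follows essentially the same approach as the paper's proof: induction on $k$, with the base cases $k < s$ and $k = s$ handled exactly as you describe, and the inductive step driven by \Cref{closure}(i) to decide whether $e_{i+k-1}$ lies in the closure of the shorter interval. The only cosmetic difference is that where you plan to argue the alternation pattern (no circuit at position $j+1$) directly from orthogonality, the paper simply cites \Cref{structure}(ii)/(v) (which encapsulates that orthogonality argument) to conclude that the window $\{e_{i+k-s},\ldots,e_{i+k-1}\}$ is independent when $k-s$ is odd; the paper then splits into four short subcases according to whether $\{e_i,\ldots,e_{i+s-1}\}$ is a circuit and the parity of $s+k$, checking the floor/ceiling arithmetic in each.
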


\begin{proof}
The proof is by induction on $k$. If $k < s$, then $\{e_i, e_{i+1}, \ldots, e_{i+k-1}\}$ is \bloo{a proper subset of an $s$-element circuit, so it is independent. Therefore,} $r(\{e_i, e_{i+1}, \ldots, e_{i+k-1}\}) = k$. \bloo{Now} suppose $k=s$. If $\{e_i, e_{i+1}, \ldots, e_{i+s-1}\}$ is a circuit, then
$$r(\{e_i, e_{i+1}, \ldots, e_{i+s-1}\})=s-1=\floor*{{\textstyle \frac{s+s-1}{2}}},$$
while if $\{e_i, e_{i+1}, \ldots, e_{i+s-1}\}$ is not a circuit, then, by \Cref{closure},
$$r(\{e_i, e_{i+1}, \ldots, e_{i+s-1}\})=s=\ceil*{{\textstyle \frac{s+s-1}{2}}}.$$
Thus the lemma holds for all $1\le k\le s$.

Now suppose that $s+1\le k\le n-t+1$, and the lemma holds for the set $\{e_i, e_{i+1}, \ldots, e_{i+k-2}\}$. Consider $\{e_i, e_{i+1}, \ldots, e_{i+k-1}\}$. First assume that $\{e_i, e_{i+1}, \ldots, e_{i+s-1}\}$ is a circuit. If $s+k$ is odd, then $k-s$ is odd, and it follows by Lemma~\ref{structure}(ii) and (v) that $\{e_{i+k-s}, e_{i+k-s+1}, \ldots, e_{i+k-1}\}$ is not a circuit. Therefore, by Lemma~\ref{closure}, $e_{i+k-1}\not\in \cl(\{e_i, e_{i+1}, \ldots, e_{i+k-2}\})$, and so, by the induction assumption,
\begin{align*}
r(\{e_i, e_{i+1}, \ldots, e_{i+k-1}\}) & = r(\{e_i, e_{i+1}, \ldots, e_{i+k-2}\}) + 1 \\
& = \floor*{{\textstyle \frac{s+k-2}{2}}}+1 = \floor*{{\textstyle \frac{s+k}{2}}} = \floor*{{\textstyle \frac{s+k-1}{2}}}
\end{align*}
as $s+k$ is odd. If $s+k$ is even, then $\{e_{i+k-s}, e_{i+k-s+1}, \ldots, e_{i+k-1}\}$ is a circuit, and so $e_{i+k-1}\in \cl(\{e_i, e_{i+1}, \ldots, e_{i+k-2}\})$. Therefore
\begin{align*}
r(\{e_i, e_{i+1}, \ldots, e_{i+k-1}\}) & = r(\{e_i, e_{i+1}, \ldots, e_{i+k-2}\}) \\
& = \floor*{{\textstyle \frac{s+k-2}{2}}} = \floor*{{\textstyle \frac{s+k-1}{2}}}
\end{align*}
as $s+k$ is even.

Now assume that $\{e_i, e_{i+1}, \ldots, e_{i+s-1}\}$ is not a circuit. If $s+k$ is odd, then $\{e_{i+k-s}, e_{i+k-s+1}, \ldots,e_{i+k-1}\}$ is a circuit, and so, by the induction assumption \blue{and \Cref{closure},}
\begin{align*}
r(\{e_i, e_{i+1}, \ldots, e_{i+k-1}\}) & = r(\{e_i, e_{i+1}, \ldots, e_{i+k-2}\}) \\
& = \ceil*{{\textstyle \frac{s+k-2}{2}}} =  \ceil*{{\textstyle \frac{s+k-1}{2}}}
\end{align*}
as $s+k$ is odd. If $s+k$ is even, then $\{e_{i+k-s}, e_{i+k-s+1}, \ldots, e_{i+k-1}\}$ is not a circuit, and so, by \Cref{closure} and the induction assumption,
\begin{align*}
r(\{e_i, e_{i+1}, \ldots, e_{i+k-1}\}) & = r(\{e_i, e_{i+1}, \ldots, e_{i+k-2}\}) + 1 \\
& = \ceil*{{\textstyle \frac{s+k-2}{2}}} + 1 = \ceil*{{\textstyle \frac{s+k}{2}}} = \ceil*{{\textstyle \frac{s+k-1}{2}}}
\end{align*}
as $s+k$ is even. This completes the proof of the lemma.
\end{proof}

The next lemma shows that the rank of an $(s, t)$-cyclic matroid on $n$ elements is invariant under $s$, $t$, and $n$.

\begin{lemma}
\label{mat_rank}
Let $M$ be an $(s, t)$-cyclic matroid on $n$ elements. Then $\left.r(M) = \frac{n+s-t}{2}\right.$ and $r^*(M) = \frac{n-s+t}{2}$.
\end{lemma}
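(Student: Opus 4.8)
The plan is to reduce everything to proving the single lower bound $r(M)\geq\frac{n+s-t}{2}$ for an arbitrary $(s,t)$-cyclic matroid $M$ on $n$ elements, and then obtain the matching upper bound by duality. Indeed, $M^*$ is a $(t,s)$-cyclic matroid on $n$ elements (any $(s,t)$-cyclic ordering of $M$ is a $(t,s)$-cyclic ordering of $M^*$, as used in the proof of \Cref{adj_cocircuits}), so the bound applied to $M^*$ reads $r(M^*)\geq\frac{n+t-s}{2}$; since $r^*(M)=r(M^*)=n-r(M)$, this rearranges to $r(M)\leq n-\frac{n+t-s}{2}=\frac{n+s-t}{2}$. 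Hence $r(M)=\frac{n+s-t}{2}$, and then $r^*(M)=n-r(M)=\frac{n-s+t}{2}$, as claimed.

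To prove $r(M)\geq\frac{n+s-t}{2}$, I would split into two cases. If $n=s+t-2$, then $\frac{n+s-t}{2}=s-1$, and since condition~(i) in the definition of an $(s,t)$-cyclic ordering guarantees that $M$ has an $s$-element circuit, we get $r(M)\geq s-1$ immediately. If instead $n>s+t-2$, then \Cref{even_n} gives that $n$ is even and $s\equiv t\bmod 2$, so $s+n-t$ is even. Using condition~(i) again, choose $i$ so that $\{e_i,e_{i+1},\ldots,e_{i+s-1}\}$ is a circuit of $M$, and apply \Cref{set_rank} with $k=n-t+1$; this is in range because $n-t+1\geq 1$ and, since $n>s+t-2$, also $n-t+1\geq s$. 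We conclude
\[
r(\{e_i,e_{i+1},\ldots,e_{i+n-t}\})=\floor*{{\textstyle\frac{s+n-t}{2}}}=\frac{n+s-t}{2},
\]
the second equality holding because $s+n-t$ is even. Thus $r(M)\geq\frac{n+s-t}{2}$, finishing the argument.

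This is essentially a routine consequence of the lemmas already established, and I do not anticipate a real obstacle. The points that warrant care are: recording that $M^*$ is $(t,s)$-cyclic, so the upper bound on $r(M)$ is free by duality and one avoids the more awkward route of showing that the remaining $t-1$ elements lie in the closure of the $(n-t+1)$-element consecutive set (which would require pushing \Cref{closure} past its stated range $s-1\leq k\leq n-t$); verifying that $k=n-t+1$ lies in the admissible range of \Cref{set_rank} and satisfies $k\geq s$ exactly when $n>s+t-2$; and invoking \Cref{even_n} so that the floor in \Cref{set_rank} evaluates to $\frac{n+s-t}{2}$ rather than $\frac{n+s-t-1}{2}$. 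The separate treatment of $n=s+t-2$ is needed only because \Cref{set_rank} assumes $n>s+t-2$.
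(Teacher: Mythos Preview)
Your proposal is correct and follows essentially the same approach as the paper: split into the cases $n=s+t-2$ and $n>s+t-2$, in the latter case invoke \Cref{set_rank} with $k=n-t+1$ together with \Cref{even_n} to get $r(M)\ge\frac{n+s-t}{2}$, and obtain the reverse inequality by duality. The only cosmetic differences are that the paper applies \Cref{set_rank} at $i=1$ and observes that both the floor and ceiling cases collapse to $\frac{n+s-t}{2}$, whereas you select $i$ so that $\{e_i,\ldots,e_{i+s-1}\}$ is a circuit and land directly in the floor case; and the paper writes out the dual lower bound as $r^*(\{e_1,\ldots,e_{n-s+1}\})=\frac{n-s+t}{2}$ rather than phrasing it as the same inequality applied to the $(t,s)$-cyclic matroid $M^*$.
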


\begin{proof}
\bloo{By \Cref{lower_bound}, the matroid $M$ has at least $s+t-2$ elements. Since $M$ has an $s$-element circuit and a $t$-element cocircuit, $r(M) \geq s-1$ and $r^*(M) \geq t-1$. Therefore, if $n=s+t-2$, then}
$$r(M) = s-1 = {\textstyle \frac{(s+t-2) + s-t}{2}}$$
and
$$r^*(M) = t-1 = {\textstyle \frac{(s+t-2)-s+t}{2}}.$$
\bloo{Otherwise, by \Cref{set_rank}, the set $\{e_1,e_2,\ldots,e_{n-t+1}\}$ either has rank $\floor*{\frac{n+s-t}{2}}$ or rank $\ceil*{\frac{n+s-t}{2}}$. By \Cref{even_n}, we have that $\frac{n+s-t}{2}$ is even, so
\[
r(\{e_1,e_2,\ldots,e_{n-t+1}\}) = \frac{n+s-t}{2}.
\]
Therefore, $r(M) \geq \frac{n+s-t}{2}$. Similarly, by Lemmas~\ref{even_n} and~\ref{set_rank}, we get that
\[
r^*(\{e_1,e_2,\ldots,e_{n-s+1}\}) = \frac{n-s+t}{2},
\]
and so $r^*(M) \geq \frac{n-s+t}{2}$. Since $\frac{n+s-t}{2} + \frac{n-s+t}{2} = n$, it follows that $\left.r(M) = \frac{n+s-t}{2}\right.$ and $r^*(M) = \frac{n-s+t}{2}$.}
\end{proof}

The last lemma in this section \blue{will be used to prove \Cref{par_to_tot} in the next section; we include it here as it may be of independent interest.}

\begin{lemma}
\label{total_circuits}
Let $s$ and $t$ be positive integers exceeding one, and let $\sigma = (e_1, e_2, \ldots, e_n)$ be a nearly $(s, t)$-cyclic ordering of a matroid $M$, where $n \geq s+t$. If $\{e_i, e_{i+1}, \ldots, e_{i+s-1}\}$ is a circuit for all odd $i \in [n]$, then $\sigma$ is an $(s, t)$-cyclic ordering of $M$.
\end{lemma}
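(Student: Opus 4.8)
The plan is to verify that the given ordering $\sigma$ satisfies conditions (i)--(iv) in the definition of an $(s,t)$-cyclic ordering. Condition (i) is immediate, since $\sigma(1,s)$ is a circuit (as $1$ is odd). Everything else reduces to understanding the $t$-element cocircuits, and the engine of the argument is the following claim, which I would prove first: \emph{every $t$-element cocircuit of $M$ that contains a set of $t-1$ consecutive elements of $\sigma$ is itself a set of $t$ consecutive elements of $\sigma$}, that is, has the form $\sigma(j,j+t-1)$. To prove it, write such a cocircuit as $C^{*}=\sigma(j,j+t-2)\cup\{e_{x}\}$ and suppose $e_{x}$ is not adjacent to this block, so $x\in[j+t,j-2]$. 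Using $n\ge s+t$, one chooses an odd $k\in[n]$ for which the $s$-element circuit $\sigma(k,k+s-1)$ contains $e_{x}$ but misses every element of $\sigma(j,j+t-2)$; then $|C^{*}\cap\sigma(k,k+s-1)|=1$, contradicting orthogonality.

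Granting the claim, I would argue as follows. Because $\sigma$ is a nearly $(s,t)$-cyclic ordering, each set $\sigma(j,j+t-2)$ of $t-1$ consecutive elements lies in some $t$-element cocircuit, which by the claim is either $\sigma(j-1,j+t-2)$ or $\sigma(j,j+t-1)$; reading this around the cycle, no two cyclically consecutive members of the family $\{\sigma(j,j+t-1):j\in[n]\}$ can both fail to be cocircuits. Next I would invoke orthogonality again, now between the odd $s$-element circuits and the cocircuits in this family: a routine check using $n\ge s+t$ shows that $\sigma(k,k+s-1)$ and $\sigma(m,m+t-1)$ meet in exactly one element precisely when $m=k+s-1$ or $m=k-t+1$, so $\sigma(m,m+t-1)$ is not a cocircuit whenever $m$ lies in $\{k+s-1:k\text{ odd in }[n]\}\cup\{k-t+1:k\text{ odd in }[n]\}$. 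This forbidden set is therefore contained in an independent set of the cycle on vertex set $[n]$, so has at most $\lfloor n/2\rfloor$ elements; but the $\lceil n/2\rceil$ residues $k+s-1$ with $k$ odd in $[n]$ are already that many, so $n$ must be even. It follows that the forbidden set is exactly the parity class $\{m:m\equiv s\bmod 2\}$; since it also contains the residues $k-t+1$ with $k$ odd, namely $\{m:m\equiv t\bmod 2\}$, we get $s\equiv t\bmod 2$, and the cocircuits in the family are precisely the sets $\sigma(j,j+t-1)$ with $j\not\equiv s\bmod 2$. Conditions (ii) (take $j\in\{1,2\}$) and (iv) ($j$ and $j+2$ have the same parity) follow immediately.

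It remains to establish condition (iii), for which I would show that $\sigma(i,i+s-1)$ is a circuit if and only if $i$ is odd. One direction is the hypothesis; for the other, if $\sigma(i,i+s-1)$ were a circuit with $i$ even, then $i+s-1\not\equiv s\bmod 2$, so $\sigma(i+s-1,i+s+t-2)$ is a cocircuit by the previous paragraph, and (using $n\ge s+t$ to rule out overlap at the far end) it meets $\sigma(i,i+s-1)$ only in $e_{i+s-1}$, contradicting orthogonality. Since $n$ is even, $i$ odd implies $i+2$ odd, so whenever $\sigma(i,i+s-1)$ is a circuit so is $\sigma(i+2,i+s+1)$; this is condition (iii), and the proof is complete.

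The step I expect to be the main obstacle is the proof of the claim, specifically locating the odd index $k$ when $e_{x}$ is close to (but not adjacent to) the block $\sigma(j,j+t-2)$: there the window of $s$-element circuits disjoint from the block is narrow, and one must check it always contains an odd index, handling in particular the boundary case $n=s+t$ and the subtlety that ``odd'' is not preserved modulo $n$ when $n$ is odd. The later orthogonality computations are routine once the description of when two consecutive blocks meet in a single element is in hand.
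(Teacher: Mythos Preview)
Your approach is correct, but it is considerably more elaborate than the paper's. The paper's proof is essentially three lines: rather than proving your general claim about arbitrary $(t-1)$-element consecutive blocks, it anchors the block at a well-chosen position. For each odd $i$, it considers the specific block $\sigma(i-t+2,i)$; the extra element $e_j$ of the cocircuit through this block must lie in $\sigma(i+1,i+s-1)$ by orthogonality with the circuit $\sigma(i,i+s-1)$, and then must equal $e_{i+1}$ by orthogonality with the circuit $\sigma(i+2,i+s+1)$ (using $n\ge s+t$ to ensure a one-element intersection). This pins down the cocircuit as $\sigma(i-t+2,i+1)$ directly, with no need for your general claim, the characterisation of all consecutive cocircuits, or the cycle-independent-set counting.

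What your approach buys is explicitness: you prove $n$ is even and $s\equiv t\bmod 2$ inside the argument, and you verify condition~(iii) by ruling out consecutive circuits at even indices. The paper simply asserts that exhibiting the cocircuits $\sigma(i-t+2,i+1)$ for all odd $i$ is ``sufficient'', leaving the reader to fill in (iii) and (iv), and to note that when $n$ is odd and $i=n$ the index $i+2$ reduces to an even position (so that step needs a small patch). Your route is longer but more self-contained; the paper's route is slicker but relies on these checks being routine. Regarding the obstacle you flag: after reducing to $e_x\in\sigma(j+t,j-2)$, the admissible indices $k$ form a cyclic interval of length at least two, and since the even positions in $[n]$ never occupy two cyclically adjacent slots (even when $n$ is odd), that interval always contains an odd $k$---so your worry is resolvable.
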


\begin{proof}
It is sufficient to prove that, for all odd $i \in [n]$, the set $\{e_{i-t+2}, e_{i-t+3}, \ldots, e_{i+1}\}$ is a cocircuit. Consider the set $\{e_{i-t+2}, e_{i-t+3}, \ldots, e_i\}$. This set contains $t-1$ consecutive elements of $\sigma$, so must be contained in a $t$-element cocircuit $C^*$. Let $e_j$ be the unique element of $C^*$ not contained in $\{e_{i-t+2}, e_{i-t+3}, \ldots, e_i\}$. If $e_j \notin \{e_{i+1}, e_{i+2}, \ldots, e_{i+s-1}\}$, then $C^*$ intersects the circuit $\{e_i, e_{i+1}, \ldots, e_{i+s-1}\}$ in exactly one element, contradicting orthogonality. Furthermore, if $e_j \in \{e_{i+2}, e_{i+3}, \ldots, e_{i+s-1}\}$, then, as $n \geq s+t$, the cocircuit $C^*$ intersects the circuit $\{e_{i+2}, e_{i+3}, \ldots, e_{i+s+1}\}$ in exactly one element. This last contradiction implies that $e_j = e_{i+1}$, completing the proof of the lemma.
\end{proof}

\section{Proof of Theorem $1.1$}
\label{cyclicthm}

This section consists of the proof of Theorem~\ref{par_to_tot}. Throughout the section, let $M$ be a nearly $(s, t)$-cyclic matroid, where $s, t\ge 3$, and let $\sigma = (e_1, e_2, \ldots, e_n)$ be a nearly $(s, t)$-cyclic ordering of $M$. We shall prove that, provided $n$ is sufficiently large, $\sigma$ is an $(s, t)$-cyclic ordering of $M$.

Recall that, for all $i, j\in [n]$, we define $\sigma(i, j)$ to be the set $\{e_i, e_{i+1}, \ldots, e_j\}$. Additionally, for all $i\in [n]$, let $C_i$ be an arbitrarily chosen circuit of size $s$ containing $\sigma(i, i+s-2)$ and let $C^*_i$ be an arbitrarily chosen cocircuit of size $t$ containing $\sigma(i, i+t-2)$. There is a unique element of $C_i$ not contained in $\sigma(i, i+s-2)$; call this element $c_i$. Likewise, let $c^*_i$ be the unique element of $C^*_i$ not contained in $\sigma(i, i+t-2)$.

\begin{lemma}
\label{consec_neq}
If $n \geq s+2t-4$, then $c_i \neq c_{i+1}$ for all $i \in [n]$.
\end{lemma}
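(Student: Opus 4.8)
The plan is to argue by contradiction: suppose $c_i = c_{i+1}$ for some $i \in [n]$. Without loss of generality (re-indexing), take $i=1$, so $c_1 = c_2 = f$ for some element $f$. Then $C_1 = \sigma(1,s-1) \cup \{f\}$ and $C_2 = \sigma(2,s) \cup \{f\}$ are both $s$-element circuits. First I would locate $f$ more precisely. Since $f \notin \sigma(1,s-1)$ and $f \notin \sigma(2,s)$, we have $f \notin \sigma(1,s)$. Now I want to pin down where $f$ sits in the cyclic order, and for that I would use a cocircuit through the ``far side'' of the order together with orthogonality.

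The key step is to apply circuit elimination (or a direct orthogonality argument) to $C_1$ and $C_2$. Their symmetric difference-type relation is useful: $C_1 \cap C_2 \supseteq \sigma(2,s-1) \cup \{f\}$, and $e_1 \in C_1 \setminus C_2$ while $e_s \in C_2 \setminus C_1$. By circuit elimination on the element $f$ (which lies in both), there is a circuit $C \subseteq (C_1 \cup C_2) \setminus \{f\} = \sigma(1,s)$ of size at most $s$; but $\sigma(1,s)$ has $s$ elements and every $(s-1)$-subset of consecutive elements is a proper subset of an $s$-element circuit, hence independent, so in fact $C = \sigma(1,s)$ is itself an $s$-element circuit. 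This is the crucial structural consequence: $\sigma(1,s)$ is a circuit. Now I would play this against a cocircuit. By the nearly $(s,t)$-cyclic property, $\sigma(s+1, s+t-1)$ — a set of $t-1$ consecutive elements — lies in a $t$-element cocircuit $C^*_{s+1}$, and its single extra element $c^*_{s+1}$ must, by orthogonality with the circuit $\sigma(1,s)$, either avoid $\sigma(1,s)$ entirely or... actually I need two such elements in the intersection, so $c^*_{s+1}$ avoiding $\sigma(1,s)$ is forced — but then I need a second circuit to create a contradiction. The cleaner route: $C_1 = \sigma(1,s-1)\cup\{f\}$ is a circuit, so by orthogonality $f$ cannot meet $C^*_{s+1}$ in exactly one element; combined with the constraint that $C^*_{s+1}$ meets the circuit $\sigma(1,s)$ in either zero or at least two elements, and the fact that $C^*_{s+1}$ contains $\sigma(s+1,s+t-1)$ which is disjoint from $\sigma(1,s)$ when $n \geq s+t$, we get $C^*_{s+1} \cap \sigma(1,s) = \emptyset$, i.e. $c^*_{s+1} \notin \sigma(1,s)$.

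The way to finish, and where the size hypothesis $n \geq s+2t-4$ enters, is to produce a second $s$-element circuit that is a translate of $\sigma(1,s)$ blocking $C^*_{s+1}$ on the other side, or to produce a second cocircuit. Here is the approach I'd use: having shown $\sigma(1,s)$ is a circuit, I'd also show (by the same circuit-elimination argument applied after observing that $c_1 = c_2 = f$ with $f$ now known to lie outside $\sigma(1,s)$) that actually $f$ can be located in a window of size roughly $t$ past $e_s$, using orthogonality between $C_1$ and the cocircuits $C^*_{j}$ for $j$ ranging over $[s+1, \ldots]$: $C_1$ must meet each such $t$-element cocircuit in zero or $\geq 2$ elements, and since $C_1 \cap \sigma(s+1, n) = \{f\}$ possibly, this forces $f$ into the same window as $e_1$ or $e_s$ — i.e., $f \in \sigma(s+1, s+t-2)$ or $f \in \sigma(n-t+3, n)$. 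Symmetrically, using cocircuits on the other side forces $f$ into the ``mirror'' window. When $n \geq s+2t-4$ these two windows are disjoint, giving the contradiction. The main obstacle I anticipate is the careful bookkeeping of exactly which consecutive windows the cocircuits occupy and verifying the disjointness of windows requires precisely $n \geq s+2t-4$ — getting the index arithmetic right (and handling the wrap-around via the $[i,j]$ notation) is the delicate part, but it is routine once the circuit $\sigma(1,s)$ is established and orthogonality is invoked against a well-chosen family of $t$-element cocircuits on each side.
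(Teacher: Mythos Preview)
Your overall strategy---circuit elimination on $f$ to produce a circuit $C\subseteq\sigma(1,s)$, then orthogonality with cocircuits on each side of $\sigma(1,s)$ to trap $f$ in two windows that are disjoint when $n\ge s+2t-4$---is exactly the paper's approach, and your final paragraph correctly identifies the two windows $\sigma(s+1,s+t-2)$ and $\sigma(n-t+3,n)$.

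There is, however, a genuine gap in your argument that $C=\sigma(1,s)$. You write that ``every $(s-1)$-subset of consecutive elements is a proper subset of an $s$-element circuit, hence independent, so in fact $C=\sigma(1,s)$.'' But a proper subset of $\sigma(1,s)$ need not be a set of \emph{consecutive} elements: for instance $\sigma(1,s)\setminus\{e_2\}$ contains both $e_1$ and $e_s$ and is not a subset of either $\sigma(1,s-1)$ or $\sigma(2,s)$, so you have no reason to declare it independent. The paper does not claim $C=\sigma(1,s)$; it only observes that $e_1\in C$ (else $C\subsetneq C_2$) and $e_s\in C$ (else $C\subsetneq C_1$), which is all that is needed. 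This matters downstream: your use of $C^*_{s+1}$ to conclude $c^*_{s+1}\notin\sigma(1,s)$ relies on $\sigma(1,s)$ being a circuit, which you have not established. The fix is to use $C^*_s$ instead (the cocircuit containing $\sigma(s,s+t-2)$): since $e_s\in C$ and $C\subseteq\sigma(1,s)$, orthogonality forces $c^*_s\in C\setminus\{e_s\}\subseteq\sigma(1,s-1)$; then orthogonality between $C^*_s$ and $C_1$ forces $c_1\in\sigma(s,s+t-2)$, and hence $f\in\sigma(s+1,s+t-2)$ since $f\notin\sigma(1,s)$. The symmetric argument on the other side uses $C^*_{3-t}$ (the cocircuit containing $\sigma(3-t,1)$) together with $e_1\in C$ and orthogonality with $C_2$. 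With this correction your argument goes through and coincides with the paper's.
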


\begin{proof}
Suppose \blue{$n \geq s+2t-4$ and} $c_i = c_{i+1}$ for some $i\in [n]$. Then $C_i = \sigma(i, i+s-2) \cup \{c_i\}$ and $C_{i+1} = \sigma(i+1, i+s-1) \cup \{c_i\}$. By circuit elimination, there is a circuit, say $C$, of $M$ contained in $\sigma(i, i+s-1)$. If $C$ does not contain $e_i$, then $C$ is properly contained in the circuit $C_{i+1}$, a contradiction. Similarly, if $C$ does not contain $e_{i+s-1}$, then $C$ is properly contained in $C_i$. Therefore, $C$ contains both $e_i$ and $e_{i+s-1}$.

\bloo{Since $t \geq 2$, we have that $n \geq s+t-2$. Therefore, } the $(t-1)$-element set $\sigma\left(i+s-1, i+s+t-3\right)$ intersects $C$ in only the element $e_{i+s-1}$. Therefore, by orthogonality, \bloo{$c^*_{i+s-1} \in C - \{e_{i+s-1}\} \subseteq \sigma(i, i+s-2)$}. This means that $C^*_{i+s-1}$ and $\sigma\left(i,i+s-2\right)$ also intersect in exactly one element. Therefore, by orthogonality, \bloo{$c_i \in \sigma(i+s-1,i+s+t-3)$}.

Similarly, the $(t-1)$-element set $\sigma(i-t+2,i)$ intersects $C$ in only the element $e_i$. Therefore, orthogonality between $C^*_{i-t+2}$ and $C$ implies that \bloo{$c^*_{i-t+2} \in C-\{e_i\} \subseteq \sigma(i+1,i+s-1)$}. Applying orthogonality again, this time between $C^*_{i-t+2}$ and $C_{i+1}$, shows that \bloo{$c_{i+1} \in \sigma(i-t+2,i)$}. But $c_i = c_{i+1}$, and so $c_i$ is contained in both $\sigma(i-t+2,i)$ and $\sigma(i+s-1,i+s+t-3)$, two sets which are disjoint since $n \geq s+2t-4$. This contradiction implies that $c_i \neq c_{i+1}$ and completes the proof.
\end{proof}

\bloo{The next lemma is used several times in the proof of \Cref{circ_unique}.}

\begin{lemma}
\label{circ_unique_helper}
\bloo{Suppose there exists $d_i \neq c_i$ such that $D_i = \sigma(i,i+s-2) \cup \{d_i\}$ is a circuit of $M$. Let $j \in [n]$ such that $|\sigma(j,j+t-2) \cap \{c_i,d_i\}| = 1$. Then $\sigma(j,j+t-2)$ intersects $\sigma(i,i+s-2)$.}
\end{lemma}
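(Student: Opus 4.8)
The plan is to argue by orthogonality, using the two circuits $C_i = \sigma(i,i+s-2)\cup\{c_i\}$ and $D_i = \sigma(i,i+s-2)\cup\{d_i\}$ together with the $t$-element cocircuit guaranteed to contain the $(t-1)$-element window $\sigma(j,j+t-2)$. Write $C^*$ for a $t$-element cocircuit of $M$ containing $\sigma(j,j+t-2)$; this exists because $\sigma$ is a nearly $(s,t)$-cyclic ordering. The hypothesis $|\sigma(j,j+t-2)\cap\{c_i,d_i\}|=1$ means that exactly one of $c_i,d_i$ lies in the window, and hence in $C^*$; say without loss of generality that $c_i\in C^*$ and $d_i\notin\sigma(j,j+t-2)$ (the roles of $c_i$ and $d_i$ are symmetric since both $C_i$ and $D_i$ are circuits on the same base window).

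The first step is to suppose for a contradiction that $\sigma(j,j+t-2)$ does \emph{not} intersect $\sigma(i,i+s-2)$. Then $C^*\cap C_i$ is contained in $(C^*\cap\sigma(i,i+s-2))\cup(C^*\cap\{c_i\}) = \varnothing \cup \{c_i\}$, so $|C^*\cap C_i|\le 1$. Since $c_i\in C^*\cap C_i$, in fact $|C^*\cap C_i| = 1$, which contradicts orthogonality (\Cref{orthog}) between the circuit $C_i$ and the cocircuit $C^*$. The only subtlety is making sure $d_i$ cannot sneak into $C^*$ and rescue the count: but we chose labels so that $d_i\notin\sigma(j,j+t-2)$, and $C^*$ might in principle still contain $d_i$ as its one extra element $c^*_j$ — however, that does not affect the intersection $C^*\cap C_i$, since $d_i\notin C_i$. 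So the contradiction stands regardless.

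I expect no real obstacle here; the main point requiring a little care is the reduction "without loss of generality $c_i\in C^*$." This is justified because the statement to be proved is symmetric in $c_i$ and $d_i$: $\sigma(i,i+s-2)$ is common to both $C_i$ and $D_i$, so whichever of $c_i,d_i$ the cocircuit window captures, we run the identical orthogonality argument with $C_i$ replaced by $D_i$ if necessary. One should also note in passing that $C^*$ need not be unique — we only need existence of one such cocircuit — and that the conclusion "$\sigma(j,j+t-2)$ intersects $\sigma(i,i+s-2)$" is exactly the negation of the assumption we refuted, completing the proof.
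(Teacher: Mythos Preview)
There is a genuine gap in your argument. When you write
\[
C^*\cap C_i \subseteq (C^*\cap\sigma(i,i+s-2))\cup(C^*\cap\{c_i\}) = \varnothing \cup \{c_i\},
\]
you are implicitly assuming $C^*\cap\sigma(i,i+s-2)=\varnothing$. But the assumption you made for contradiction is only that the \emph{window} $\sigma(j,j+t-2)$ misses $\sigma(i,i+s-2)$; the cocircuit $C^*$ has one further element $c^*_j\notin\sigma(j,j+t-2)$, and nothing prevents $c^*_j$ from landing in $\sigma(i,i+s-2)$. In that case $|C^*\cap C_i|=2$ and there is no contradiction with orthogonality using $C_i$ alone. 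You anticipated the possibility $c^*_j=d_i$ but overlooked the possibility $c^*_j\in\sigma(i,i+s-2)$.

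This is precisely why the paper's proof needs \emph{both} circuits $C_i$ and $D_i$. Orthogonality between $C^*_j$ and $C_i$ does not yield a contradiction; instead it forces $c^*_j\in C_i$, and since $c^*_j\neq c_i$ (as $c_i$ lies in the window while $c^*_j$ does not), we get $c^*_j\in\sigma(i,i+s-2)\subseteq D_i$. Now $C^*_j\cap D_i=\{c^*_j\}$, since $d_i\notin\sigma(j,j+t-2)$ and the window misses $\sigma(i,i+s-2)$, and \emph{that} is the orthogonality violation. Your argument can be repaired by inserting this second step.
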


\begin{proof}
\bloo{Without loss of generality, we may assume that \bloo{$c_i \in \sigma(j,j+t-2)$} and \bloo{$d_i \notin \sigma(j,j+t-2)$}. Suppose $\sigma(j,j+t-2)$ does not intersect $\sigma(i,i+s-2)$. Then $\sigma(j,j+t-2)$ intersects $C_i$ in one element. Therefore, by orthogonality, \bloo{$c_j^* \in \sigma(i,i+s-2)$}. But now $c_j^* \in D_i$, so $C_j^*$ and $D_i$ intersect in one element. This contradiction to orthogonality implies that $\sigma(j,j+t-2)$ intersects $\sigma(i,i+s-2)$, and completes the proof.}
\end{proof}

\begin{lemma}
\label{circ_unique}
If $n \geq s+2t-4$, then, for all $i \in [n]$, there is a unique circuit of size $s$ containing $\sigma(i, i+s-2)$.
\end{lemma}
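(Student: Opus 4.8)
The plan is to argue by contradiction, the engine being Lemma~\ref{circ_unique_helper}. Existence of an $s$-element circuit containing $\sigma(i,i+s-2)$ is immediate since $\sigma(i,i+s-2)$ consists of $s-1$ consecutive elements of a nearly $(s,t)$-cyclic ordering, so the content is uniqueness. Suppose then that, for some $i \in [n]$, the set $\sigma(i,i+s-2)$ is contained in two distinct $s$-element circuits; write these as $C_i = \sigma(i,i+s-2) \cup \{c_i\}$ (the circuit fixed at the start of the section) and $D_i = \sigma(i,i+s-2) \cup \{d_i\}$, where $c_i \neq d_i$. Both $c_i$ and $d_i$ lie in $\sigma(i+s-1,i-1)$, the complement of $\sigma(i,i+s-2)$, and this set has $n-s+1 \geq 2t-3$ elements by hypothesis. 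I claim it is enough to produce a set $W$ of $t-1$ consecutive elements of $\sigma$ that is contained in $\sigma(i+s-1,i-1)$ and contains exactly one of $c_i$ and $d_i$: such a $W$ is disjoint from $\sigma(i,i+s-2)$, so it contradicts Lemma~\ref{circ_unique_helper}. (As the roles of $c_i$ and $d_i$ are interchangeable in that lemma, it does not matter which of the two circuits we call $C_i$.)

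To produce $W$, write $c_i = e_{i+s-1+a}$ and $d_i = e_{i+s-1+b}$ with $0 \leq a,b \leq n-s$, and, after swapping $C_i$ and $D_i$ if necessary, assume $a<b$. First try $W = \sigma(i+s-1+g,\,i+s-1+g+t-2)$ with $g = \max\{0,\,a-t+2\}$. One checks that $g \geq 0$ and $g+t-2 = \max\{t-2,a\} \leq n-s$ (the inequality $t-2 \leq n-s$ holds since $n \geq s+2t-4$), so $W \subseteq \sigma(i+s-1,i-1)$; and $c_i \in W$ since $a \in \{g,\dots,g+t-2\}$. If $d_i \notin W$, we are done. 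Otherwise $b \leq \max\{t-2,a\}$, which, since $b>a$, is possible only if $a \leq t-3$ and $b \leq t-2$. In that case take instead $W = \sigma(i+s+a,\,i+s+a+t-2)$, the window of $t-1$ consecutive elements beginning immediately after $c_i$. Here $W \subseteq \sigma(i+s-1,i-1)$ because $a+t-1 \leq 2t-4 \leq n-s$, while $c_i \notin W$ and $d_i \in W$ because $a+1 \leq b \leq a+t-1$. Either way $W$ has the required properties, contradicting Lemma~\ref{circ_unique_helper}.

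The bulk of the write-up is just the bookkeeping verifying that each candidate window sits inside $\sigma(i+s-1,i-1)$. The only place the hypothesis $n \geq s+2t-4$ is genuinely consumed is the final case, where $c_i$ and $d_i$ are both near the ``far'' end of $\sigma(i+s-1,i-1)$ and close together — exactly the configuration in which no window of $t-1$ consecutive elements separating $c_i$ from $d_i$ can avoid meeting $\sigma(i,i+s-2)$ unless $\sigma(i+s-1,i-1)$ is long enough. I expect no other obstacle: once Lemma~\ref{circ_unique_helper} is in hand, this is a short orthogonality argument dressed up as interval arithmetic.
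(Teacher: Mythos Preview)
Your proof is correct and uses essentially the same engine as the paper---Lemma~\ref{circ_unique_helper}---with the same goal of exhibiting a $(t-1)$-window that meets $\{c_i,d_i\}$ in exactly one element while avoiding $\sigma(i,i+s-2)$. The paper, rather than parameterising positions and doing explicit interval arithmetic, applies Lemma~\ref{circ_unique_helper} three times: first in contrapositive form to force one of the two windows $\sigma(j-t+2,j)$, $\sigma(j,j+t-2)$ centred at $c_i=e_j$ to intersect $\sigma(i,i+s-2)$, which pins $c_i$ near the block; a second application forces $d_i$ into the other window; the third yields the contradiction via the shifted window $\sigma(j+1,j+t-1)$. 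Your route trades these extra invocations for direct casework on the relative positions $a<b$ of $c_i,d_i$ in the complementary arc, which makes the use of the bound $n\ge s+2t-4$ completely transparent (it is consumed exactly where you say, in the case $a\le t-3$, $b\le t-2$).
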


\begin{proof}
We know $C_i$ is an $s$-element circuit containing $\sigma(i, i+s-2)$. Suppose that there is a second such circuit. This means that there is an element $d_i$, distinct from $c_i$, such that $\sigma(i, i+s-2)\cup \{d_i\}$ is a circuit. Call this circuit $D_i$.

Now, for some \bloo{$j \in [n]$}, we have $c_i = e_j$. Consider the $(t-1)$-element subsets $\sigma(j-t+2, j)$ and $\sigma(j, j+t-2)$. \bloo{Since $c_i \neq d_i$, at least one of these sets does not contain $d_i$. Up to symmetry, we may assume that $d_i \notin \sigma(j-t+2,j)$. Now, $|\sigma(j-t+2,j) \cap \{c_i,d_i\}|=1$ and so, by \Cref{circ_unique_helper}, the set $\sigma(j-t+2,j)$ intersects $\sigma(i,i+s-2)$. Since $n \geq s+2t-5$, this implies that $\sigma(j,j+t-2)$ does not intersect $\sigma(i,i+s-2)$. Applying \Cref{circ_unique_helper} again, we see that $|\sigma(j,j+t-2) \cap \{c_i,d_i\}| \neq 1$, so $d_i \in \sigma(j,j+t-2)$. Therefore, $\sigma(j+1,j+t-1)$ contains $d_i$ but does not contain $c_i$. However, since $n \geq s+2t-4$ and $\sigma(j-t+2,j)$ intersects $\sigma(i,i+s-2)$, we also have that $\sigma(j+1,j+t-1)$ is disjoint from $\sigma(i,i+s-2)$. This contradiction to \Cref{circ_unique_helper} shows that no such $d_i$ exists, thereby completing the proof.}
\end{proof}

\begin{lemma}
\label{consec_nearby}
Let $i, j\in [n]$ such that $c_i\in \sigma(j+1, j+t-2)$, and suppose that $n\geq 2s+t-4$. Then each of the following holds:
\begin{enumerate}[{\rm (i)}]
\item If $\sigma(j, j+t-1)$ does not intersect $\sigma(i, i+s-1)$, then $c_{i+1}\in \sigma(j, j+t-1)$.

\item If $\sigma(j, j+t-1)$ does not intersect $\sigma(i-1, i+s-2)$, then $\left.c_{i-1}\in \sigma(j, j+t-1)\right.$.
\end{enumerate}
\end{lemma}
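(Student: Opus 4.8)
I would deduce part~(ii) from part~(i) by reversing the cyclic order. Reading $\sigma$ backwards gives another nearly $(s,t)$-cyclic ordering $\sigma'$ of $M$ in which the window $\sigma(i,i+s-2)$ of $C_i$ becomes a window of $\sigma'$ whose successor window is $\sigma(i-1,i+s-3)$, the window of $C_{i-1}$, and the associated extra elements $c_i$ and $c_{i-1}$ are unchanged. Under this correspondence, the statement of part~(i) applied to $\sigma'$ is exactly the statement of part~(ii) for $\sigma$, so it suffices to prove~(i).

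For~(i), suppose toward a contradiction that $c_{i+1}\notin\sigma(j,j+t-1)$. Since $c_i$ lies in the interior $\sigma(j+1,j+t-2)$ of $\sigma(j,j+t-1)$, it belongs to both $(t-1)$-element windows $\sigma(j,j+t-2)$ and $\sigma(j+1,j+t-1)$, hence to the cocircuits $C^*_j$ and $C^*_{j+1}$. The hypothesis makes the window parts $\sigma(i,i+s-2)$ of $C_i$ and $\sigma(i+1,i+s-1)$ of $C_{i+1}$ disjoint from each of $\sigma(j,j+t-2)$ and $\sigma(j+1,j+t-1)$. Orthogonality of $C^*_j$ with $C_i$ then shows that $C^*_j$ meets $C_i$ in $c_i$ together with one further element, which must be $c^*_j\in\sigma(i,i+s-2)$; similarly $c^*_{j+1}\in\sigma(i,i+s-2)$. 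On the other hand, because $c_{i+1}\notin\sigma(j,j+t-1)$, the circuit $C_{i+1}$ is disjoint from $\sigma(j,j+t-2)$ and from $\sigma(j+1,j+t-1)$, so orthogonality forces $c^*_j,c^*_{j+1}\notin C_{i+1}$; as $\sigma(i,i+s-2)\setminus\sigma(i+1,i+s-1)=\{e_i\}$, this yields $c^*_j=c^*_{j+1}=e_i$ and $e_i\neq c_{i+1}$. Thus $C^*_j=\sigma(j,j+t-2)\cup\{e_i\}$ and $C^*_{j+1}=\sigma(j+1,j+t-1)\cup\{e_i\}$, with $e_i\notin\sigma(j,j+t-1)$.

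It remains to derive a contradiction from this configuration, and this is the step I expect to be the main obstacle. First I would apply cocircuit elimination to $C^*_j$ and $C^*_{j+1}$, removing the common element $e_i$ and retaining $e_j\in C^*_j\setminus C^*_{j+1}$ (valid since $n$ is large enough that $e_j\notin\sigma(j+1,j+t-1)$); this produces a cocircuit $D^*$ with $e_j\in D^*\subseteq(C^*_j\cup C^*_{j+1})\setminus\{e_i\}=\sigma(j,j+t-1)$, and orthogonality of $D^*$ with $C_i$ then forces $c_i\notin D^*$, so $D^*\subseteq\sigma(j,j+t-1)\setminus\{c_i\}$ and hence $|D^*|\le t-1$. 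The plan is now to play this ``short'' cocircuit $D^*$, together with the now-explicit cocircuits $C^*_j$ and $C^*_{j+1}$, against circuits crossing into $\sigma(j,j+t-1)$: for instance the circuit on the window $\sigma(j-s+2,j)$, which meets $\sigma(j,j+t-1)$ only in $e_j$ and so (by orthogonality with $D^*$) has its extra element inside $D^*$, and the circuit obtained from $C_i$ and $C_{i+1}$ by strong circuit elimination, which contains both $e_i$ and $c_i$ and whose intersections with $C^*_j$ and $C^*_{j+1}$ are forced. Iterating such orthogonality arguments should either push more than $t-1$ elements into $D^*$ or exhibit a circuit meeting $D^*$ in the single element $e_j$, either of which is a contradiction. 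This is exactly the point at which the hypothesis $n\ge 2s+t-4$ is needed: it guarantees that none of these windows wrap around $\sigma$ (and it is the same bound under which $(t-1)$-element windows lie in unique $t$-element cocircuits, by the dual of \Cref{circ_unique}).
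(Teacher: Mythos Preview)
Your reduction of (ii) to (i) by reversing $\sigma$ is exactly what the paper does, and your orthogonality argument forcing $c^*_j=c^*_{j+1}=e_i$ is identical to the paper's. So the first two paragraphs of your proposal are correct and match the paper line for line.

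The divergence is in your third paragraph. Once you have $c^*_j=c^*_{j+1}$, the paper finishes in a single sentence: this contradicts \Cref{consec_neq} applied to the dual $M^*$. Indeed, $M^*$ is nearly $(t,s)$-cyclic with respect to the same ordering $\sigma$, and the hypothesis of \Cref{consec_neq} for $M^*$ is $n\ge t+2s-4=2s+t-4$, which is precisely the bound assumed here. So the lemma's bound is tailored exactly to make the dual of \Cref{consec_neq} available, and the proof ends immediately.

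Your plan to instead perform cocircuit elimination on $C^*_j$ and $C^*_{j+1}$ and then chase orthogonality with various circuits is, in effect, an attempt to reprove \Cref{consec_neq} dually from scratch. It can be completed along these lines (after obtaining $D^*\subseteq\sigma(j,j+t-1)$ one checks $e_j,e_{j+t-1}\in D^*$, then uses orthogonality with $C_{j-s+2}$ and $C_{j+t-1}$ to pin $c^*_j$ and $c^*_{j+1}$ into disjoint intervals of $\sigma$, disjoint because $n\ge 2s+t-4$), but as written your sketch is not a proof: the phrase ``iterating such orthogonality arguments should either\ldots'' does not establish either alternative. The cleaner and complete route is simply to invoke \Cref{consec_neq} for $M^*$.
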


\begin{proof}
We prove (i). \bloo{Then (ii) follows by reversing the order of $\sigma$}. Suppose that $\sigma(j, j+t-1)$ does not intersect $\sigma(i, i+s-1)$. Assume that \bloo{$c_{i+1}\notin \sigma(j, j+t-1)$}, and consider the $(t-1)$-element sets $\sigma(j, j+t-2)$ and $\sigma(j+1, j+t-1)$. Each of these sets contains $c_i$ and does not contain $c_{i+1}$. Furthermore, since $\sigma(j, j+t-1)$ and $\sigma(i, i+s-1)$ are disjoint, each of $\sigma(j, j+t-2)$ and $\sigma(j+1, j+t-1)$ intersects $C_i$ in exactly one element and does not intersect $C_{i+1}$. Therefore, by orthogonality, $c^*_j$ and $c^*_{j+1}$ are both contained in $C_i$, but not contained in $C_{i+1}$. The only possibility is $c^*_j = c^*_{j+1} = e_i$. However, this contradicts \Cref{consec_neq} when applied to $M^*$. Therefore, $c_{i+1} \in \sigma(j, j+t-1)$.
\end{proof}

\begin{lemma}
\label{consecutive}
Let $i\in [n]$, and suppose that $c_i = e_j$. If $n \geq s+2t-2$ and $n \geq 2s+t-4$, then at least one of the following holds:
\begin{enumerate} [{\rm (i)}]
\item $c_i$ and $c_{i+1}$ are both contained in $\sigma(i-1,i+s)$;

\item $c_{i+1} = e_{j+1}$; or

\item $c_{i+1} = e_{j-1}$.
\end{enumerate}
\end{lemma}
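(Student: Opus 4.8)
The goal is to show that, with $c_i = e_j$, either both $c_i$ and $c_{i+1}$ lie in the short window $\sigma(i-1,i+s)$, or $c_{i+1}$ is one of the two neighbours $e_{j-1}, e_{j+1}$ of $c_i = e_j$ in the cyclic order. The natural approach is to assume that $c_{i+1}\notin\{e_{j-1}, e_{j+1}\}$ and deduce (i). Under this assumption, there is a $(t-1)$-window straddling $e_j$ that contains $c_i$ but not $c_{i+1}$: indeed both $\sigma(j-t+2,j)$ and $\sigma(j,j+t-2)$ contain $c_i=e_j$, and since $c_{i+1}\notin\{e_{j-1},e_{j+1}\}$, at least one of the $t$-element windows $\sigma(j-1,j+t-2)$ and $\sigma(j-t+1,j)$ — equivalently, a window of the form $\sigma(j',j'+t-1)$ with $c_i$ in its interior $\sigma(j'+1,j'+t-2)$ — avoids $c_{i+1}$. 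This is precisely the setup needed to invoke \Cref{consec_nearby}: we have a window $\sigma(j',j'+t-1)$ with $c_i\in\sigma(j'+1,j'+t-2)$ but $c_{i+1}\notin\sigma(j',j'+t-1)$.

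**Key steps.** First I would fix notation: let $W = \sigma(j',j'+t-1)$ be a $t$-element window with $c_i=e_j$ in its interior and $c_{i+1}\notin W$; I will argue this can be chosen so that, moreover, $W$ does not intersect $\sigma(i,i+s-1)$ — using the hypothesis $n\geq s+2t-2$ to give enough room, provided $c_i$ is far from $\sigma(i,i+s-1)$, which is exactly the content of failing (i). Precisely, I would argue by contrapositive: if (i) fails, then at least one of $c_i, c_{i+1}$ lies outside $\sigma(i-1,i+s)$, and I would split into the cases according to which. Then the window $W$ centred appropriately on $c_i$ (or on $c_{i+1}$, using the symmetric part (ii) of \Cref{consec_neq} and \Cref{consec_nearby} applied after reversing/shifting) is disjoint from the relevant $s$-element interval. \Cref{consec_nearby}(i) then forces $c_{i+1}\in W$, contradicting the choice of $W$. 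The counting inequalities $n\geq s+2t-2$ and $n\geq 2s+t-4$ are there to guarantee both the disjointness of $W$ from the $s$-window and the hypothesis of \Cref{consec_nearby}.

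**Main obstacle.** The delicate point is the bookkeeping of \emph{which} $t$-window to use and ensuring it is simultaneously (a) interior-containing for $c_i$, (b) disjoint from the relevant $s$-element interval $\sigma(i,i+s-1)$ or $\sigma(i-1,i+s-2)$, and (c) missing $c_{i+1}$. When (i) fails, $c_i = e_j$ is at distance at least $s+1$ from $\sigma(i,i+s-1)$ (or a symmetric statement holds for $c_{i+1}$), so a $t$-window around $c_i$ has slack on at least one side; the inequality $n\geq s+2t-2$ is exactly what prevents such a window from wrapping around and hitting $\sigma(i,i+s-1)$ from the far side. I would handle the two sub-cases (the window extending "forward" from $e_{j-t+1}$ to $e_j$ versus "backward" from $e_j$ to $e_{j+t-1}$) separately, each time verifying the disjointness arithmetic and then quoting \Cref{consec_nearby}. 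The case where it is $c_{i+1}$ rather than $c_i$ that is far from the $s$-interval is symmetric: one shifts the index by one and applies \Cref{consec_nearby}(ii), or equivalently reverses $\sigma$, with \Cref{consec_neq} on $M^*$ again supplying the needed non-coincidence of cocircuit witnesses. Once the right window is produced, the contradiction is immediate, so essentially all the work is in the case analysis and the inequality checks.
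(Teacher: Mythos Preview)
Your overall strategy matches the paper's: assume (i) fails, let $e_k$ be whichever of $c_i,c_{i+1}$ lies outside $\sigma(i-1,i+s)$, and use \Cref{consec_nearby} with $t$-windows centred at $e_k$ to force the other element $e_{k'}$ into $\{e_{k-1},e_{k+1}\}$. The tools and the case split are right.

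However, there is a genuine gap in your handling of the ``close'' case. You need a $t$-window $W$ that simultaneously (a) has $e_k$ in its interior, (b) is disjoint from $\sigma(i,i+s-1)$, and (c) misses $e_{k'}$. When $e_k$ is far from $\sigma(i-1,i+s)$---specifically $e_k\notin\sigma(i-t+2,i+s+t-3)$---both of the extreme windows $\sigma(k-t+2,k+1)$ and $\sigma(k-1,k+t-2)$ satisfy (b), their intersection is $\{e_{k-1},e_k,e_{k+1}\}$, and so one of them satisfies (c); this is your argument and it works. But when $e_k$ is close to one end of $\sigma(i-1,i+s)$, say $e_k\in\sigma(i+s+1,i+s+t-3)$, only \emph{one} of the two extreme windows (namely $\sigma(k-1,k+t-2)$) satisfies (b), and there is no reason it should also satisfy (c). Your claim that ``$c_i=e_j$ is at distance at least $s+1$ from $\sigma(i,i+s-1)$'' is false in this regime (it can be distance~$2$), and the ``slack on at least one side'' heuristic does not give you (c).

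The paper closes this gap with a second application of \Cref{consec_nearby}, with the roles of $e_k$ and $e_{k'}$ swapped. Apply \Cref{consec_nearby} to the one available window $\sigma(k-1,k+t-2)$ to conclude $e_{k'}\in\sigma(k-1,k+t-2)$. If $e_{k'}\notin\{e_{k-1},e_{k+1}\}$ then (using \Cref{consec_neq}) $e_{k'}\in\sigma(k+2,k+t-2)$. Now shift the window by one: $\sigma(k+1,k+t)$ has $e_{k'}$ in its interior, is still disjoint from $\sigma(i,i+s-1)$ (this is where $n\ge s+2t-2$ is used), and crucially \emph{misses} $e_k$. Applying \Cref{consec_nearby} again, this time to force $e_k$ into $\sigma(k+1,k+t)$, gives the contradiction. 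This ``shift and swap'' move is the missing idea in your sketch.
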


\begin{proof}
Suppose (i) does not hold, that is, at least one of $c_i$ and $c_{i+1}$ is not contained in $\sigma(i-1, i+s)$. \bloo{Choose $k \in [n]$ such that $e_k \in \{c_i,c_{i+1}\}$ and $e_k \notin \sigma(i-1, i+s)$}. Let $e_{k'}$ be the other element of $c_i$ and $c_{i+1}$. We establish the lemma by proving that either $k' = k+1$ or $k' = k-1$, which we shall do using \Cref{consec_nearby}.

First assume that \bloo{$e_k \notin \sigma(i-t+2, i+s+t-3)$}. This means that neither $\sigma(k-1, k+t-2)$ nor $\sigma(k-t+2, k+1)$ intersect $\sigma(i, i+s-1)$. So, by \Cref{consec_nearby} (using part (i) if $e_k = c_i$ or part (ii) if $e_k = c_{i+1}$), \bloo{we have that $e_{k'} \in \sigma(k-1, k+t-2) \cap \sigma(k-t+2, k+1)$}. Now, 
\[\sigma(k-1, k+t-2) \cap \sigma(k-t+2, k+1) = \{e_{k-1}, e_k, e_{k+1}\}\]
 and, by \Cref{consec_neq}, $e_{k'}\neq e_k$. Therefore, either $e_{k'} = e_{k-1}$ or $e_{k'} = e_{k+1}$, the desired result.

Now assume that \bloo{$e_k \in \sigma(i-t+2,i+s+t-3)$}. Then, as \bloo{$e_k \notin \sigma(i-1,i+s)$}, either \bloo{$e_k \in \sigma(i+s+1, i+s+t-3)$} or \bloo{$e_k \in \sigma(i-t+2,i-2)$}. We consider only the former case; the analysis for the latter case is symmetrical. \bloo{Thus, suppose $e_k \in \sigma(i+s+1, i+s+t-3)$}. Now, $\sigma(k-1, k+t-2)$ does not intersect $\sigma(i, i+s-1)$, as $k$ is at most $i+s+t-3$ and $n \geq s+2t-2$. Therefore, by \Cref{consec_nearby}, we have that \bloo{$e_{k'} \in \sigma(k-1,k+t-2)$}. If $e_{k'} \neq e_{k-1}$ and $e_{k'} \neq e_{k+1}$, then \bloo{$e_{k'} \in \sigma(k+2, k+t-2)$}. Furthermore, since $n\geq s+2t-2$, the sets $\sigma(i, i+s-1)$ and $\sigma(k+1, k+t)$ do not intersect. However, $e_k \notin \sigma(k+1, k+t)$, contradicting \Cref{consec_nearby}. Thus either $e_{k'}=e_{k-1}$ or $e_{k'}=e_{k+1}$, thereby completing the proof of the lemma.
\end{proof}

\begin{lemma}
\label{2apart_neq}
If $n \geq s+2t-1$ and \blue{$n \geq 2s+t-4$}, then $c_i\neq c_{i+2}$ for all $i \in [n]$.
\end{lemma}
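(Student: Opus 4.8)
The plan is to assume, for a contradiction, that $c_i = c_{i+2} = e_j$ for some $i \in [n]$, and then to rule this out using orthogonality. First note that since $C_i = \sigma(i,i+s-2) \cup \{c_i\}$ and $C_{i+2} = \sigma(i+2,i+s) \cup \{c_{i+2}\}$ each have exactly $s$ elements while $\sigma(i,i+s-2) \cup \sigma(i+2,i+s) = \sigma(i,i+s)$, the element $e_j$ lies in neither $\sigma(i,i+s-2)$ nor $\sigma(i+2,i+s)$, hence $e_j \notin \sigma(i,i+s)$; equivalently $e_j \in \sigma(i+s+1,i-1)$. Next I would locate $c_{i+1}$. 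Our hypotheses imply those of \Cref{consecutive}, so it applies at indices $i$ and $i+1$. At index $i$, conclusion~(i) of \Cref{consecutive} would force $c_i = e_j \in \sigma(i-1,i+s)$, hence $j = i-1$; at index $i+1$, conclusion~(i) would force $c_{i+2} = e_j \in \sigma(i,i+s+1)$, hence $j = i+s+1$. These cannot both hold when $n \geq s+2t-1$, so at least one of the two applications yields conclusion~(ii) or~(iii), and in either situation $c_{i+1} \in \{e_{j-1},e_{j+1}\}$. Since reversing $\sigma$ gives another nearly $(s,t)$-cyclic ordering of $M$ (with the same bounds on $n$), we may assume $c_{i+1} = e_{j+1}$, so that $C_{i+1} = \sigma(i+1,i+s-1) \cup \{e_{j+1}\}$.

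With $C_i$, $C_{i+1}$, $C_{i+2}$ now described explicitly, I would finish by choosing a cocircuit anchored near $e_j$ that meets exactly one of these circuits in a single element. There are two cases. If $e_j \in \sigma(i+s+t-1,i-1)$, then the defining window $\sigma(j-t+2,j)$ of $C^*_{j-t+2}$ is disjoint from $\sigma(i,i+s)$ and does not contain $e_{j+1}$; since $C^*_{j-t+2}$ meets each of $C_i$ and $C_{i+2}$ in $e_j$, orthogonality forces $c^*_{j-t+2} \in \sigma(i,i+s-2) \cap \sigma(i+2,i+s) = \sigma(i+2,i+s-2)$ — which is already impossible when $s = 3$ — and for $s \geq 4$ this makes $C^*_{j-t+2}$ meet $C_{i+1}$ in the single element $c^*_{j-t+2}$, contradicting orthogonality. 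If instead $e_j \in \sigma(i+s+1,i+s+t-2)$, I would use $C^*_{j+1}$ instead: its window $\sigma(j+1,j+t-1)$ contains $e_{j+1}$ but is disjoint from $\sigma(i,i+s)$ and misses $e_j$, so orthogonality with $C_{i+1}$ (met in $e_{j+1}$) puts $c^*_{j+1} \in \sigma(i+1,i+s-1)$, orthogonality with $C_i$ then forces $c^*_{j+1}$ to be the unique element $e_{i+s-1}$ of $\sigma(i+1,i+s-1) \setminus \sigma(i,i+s-2)$, and finally $e_{i+s-1} \in C_{i+2}$ makes $C^*_{j+1}$ meet $C_{i+2}$ in the single element $e_{i+s-1}$, a final contradiction.

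The routine work is checking, using the bounds $n \geq s+2t-1$ and $n \geq 2s+t-4$, that all the relevant arcs are in general position: that $\sigma(j-t+2,j)$ (respectively $\sigma(j+1,j+t-1)$) is disjoint from $\sigma(i,i+s)$ and from $\sigma(i+1,i+s-1)$, and that no arc wraps around the cycle to create a spurious intersection. I expect the main difficulty to be organising the case split on where $e_j$ sits relative to $\sigma(i,i+s)$: the generic case is clean, but the boundary case in which $e_j$ lies just past $e_{i+s}$ is precisely why a second, slightly different cocircuit (namely $C^*_{j+1}$ rather than $C^*_{j-t+2}$) must be introduced.
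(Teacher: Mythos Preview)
Your proof is correct and follows a genuinely different route from the paper's.

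The paper begins with circuit elimination between $C_i$ and $C_{i+2}$ to obtain a circuit $C \subseteq \sigma(i,i+s)$, then runs a case analysis on which of $e_{i+s-1}, e_{i+s}$ lie in $C$, using orthogonality with the cocircuits $C^*_{i+s-1}$ and $C^*_{i+s}$ (and, in one sub-case, \Cref{consecutive}) to force $c_i \in \sigma(i+s+1,i+s+t-1)$. A symmetric argument then gives $c_{i+2} \in \sigma(i-t+1,i-1)$, and since $n \ge s+2t-1$ these two intervals are disjoint, contradicting $c_i=c_{i+2}$.

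Your approach bypasses circuit elimination entirely. You invoke \Cref{consecutive} twice (at indices $i$ and $i+1$) to pin down $c_{i+1}$ as a neighbour of $e_j$, then reduce to $c_{i+1}=e_{j+1}$ by the reversal symmetry of $\sigma$. With all three of $C_i$, $C_{i+1}$, $C_{i+2}$ described explicitly, a single well-chosen cocircuit ($C^*_{j-t+2}$ or $C^*_{j+1}$, depending on where $e_j$ sits) yields the contradiction via three successive orthogonality constraints. The ``routine work'' you flag---checking that $\sigma(j-t+2,j)$ or $\sigma(j+1,j+t-1)$ is disjoint from $\sigma(i,i+s)$ and avoids the relevant single elements---does go through under the stated bounds; in particular the boundary values $j=i-1$ and $j=i+s+1$ (arising when conclusion~(i) of \Cref{consecutive} holds at one index) are absorbed by your case split after the reversal WLOG.

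What each approach buys: the paper's argument localises $c_i$ to a specific short arc, a conclusion that is conceptually clean and could be reused elsewhere. Your argument is more economical in that it never constructs an auxiliary circuit by elimination and instead exploits $C_{i+1}$ as the ``third witness'' that the paper's proof only touches in one deep sub-case. The price is that your proof leans more heavily on \Cref{consecutive}, but since that lemma is already in hand, this is a reasonable trade.
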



\begin{proof}
Suppose $c_i = c_{i+2}$ for some $i\in [n]$. Then $C_i = \sigma(i, i+s-2) \cup \{c_i\}$ and $C_{i+2} = \sigma(i+2, i+s) \cup \{c_i\}$. By circuit elimination, there is also a circuit, say $C$, of $M$ contained in $\sigma(i, i+s)$. If $C$ contains neither $e_{i+s-1}$ nor $e_{i+s}$, then $C$ is contained in $\sigma(i, i+s-2)$, and thus properly contained in $C_i$, a contradiction. So $C$ contains at least one of $e_{i+s-1}$ and $e_{i+s}$. We next show that $c_i$ is contained in $\sigma(i+s+1, i+s+t-1)$.

First, if $e_{i+s}$ is not contained in $C$, then \bloo{$e_{i+s-1} \in C$}, in which case the $(t-1)$-element set $\sigma(i+s-1, i+s+t-3)$ intersects $C$ in one element. Therefore, by orthogonality, \bloo{$c^*_{i+s-1} \in \sigma(i,i+s-2)$}. Now, orthogonality between $C_i$ and $C^*_{i+s-1}$ implies \bloo{$c_i \in \sigma(i+s-1, i+s+t-3)$}. Furthermore, $c_i$ can be neither $e_{i+s-1}$ nor $e_{i+s}$ since these elements are contained in $\sigma\left(i+2, i+s\right)$ and $c_i=c_{i+2}$, so \bloo{$c_i \in \sigma(i+s+1, i+s+t-3)$}.

Now assume that $e_{i+s} \in C$. Orthogonality with $C^*_{i+s}$ implies that \bloo{$\left.c^*_{i+s} \in \sigma(i,i+s-1)\right.$}, so either $c^*_{i+s} = e_{i+s-1}$ or \bloo{$\left.c^*_{i+s} \in \sigma(i, i+s-2)\right.$}. In the latter case, orthogonality with $C_i$ implies that \bloo{$\left.c_i \in \sigma(i+s+1,i+s+t-2)\right.$}. Thus, we may assume that $c^*_{i+s} = e_{i+s-1}$. Now, $C^*_{i+s}$ intersects $\sigma(i+1,i+s-1)$ in one element, so \bloo{$c_{i+1} \in \sigma(i+s,i+s+t-2)$}. Either $c_{i+1} = e_{i+s}$, or \bloo{$c_{i+1} \in \sigma(i+s+1, i+s+t-2)$}. Say $c_{i+1} = e_{i+s}$. Then both $\sigma(i+1, i+s)$ and $\sigma(i+2, i+s)\cup \{c_i\}$ are circuits of $M$ (noting that $c_i \neq e_{i+1}$ because \bloo{$\left.e_{i+1} \in \sigma(i, i+s-2)\right.$}). This contradicts \Cref{circ_unique}, so \bloo{$c_{i+1} \in \sigma(i+s+1,i+s+t-2)$}. Since \bloo{$c_{i+1}\notin \sigma(i-1, i+s)$}, and $n\ge s+2t-1$ and $n\ge 2s+t-4$, it follows by \Cref{consecutive} that the elements $c_i$ and $c_{i+1}$ are consecutive, so $c_i \in \sigma(i+s+1, i+s+t-1)$.

We have now shown that, in all cases, \bloo{$c_i \in \sigma(i+s+1, i+s+t-1)$}. But, using a symmetrical argument and comparing $C$ and $C_{i+2}$, we can show that \bloo{$c_{i+2} \in \sigma(i-t+1, i-1)$}. Now, $c_{i+2} = c_i$, so \bloo{$c_i \in \sigma(i-t+1, i-1)$} and \bloo{$c_i \in \sigma(i+s+1, i+s+t-1)$}. But, since $n \geq s+2t-1$, these two sets are disjoint. This contradiction completes the proof of the lemma.
\end{proof}

\begin{lemma}
\label{gap0}
Let $n \geq s+2t-1$ and $t \geq s$. If there exists $i \in [n]$ such that $\sigma(i, i+s-1)$ is a circuit of $M$, then $M$ is $(s, t)$-cyclic.
\end{lemma}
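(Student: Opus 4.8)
After rotating $\sigma$ if necessary, we may assume $\sigma(1,s)$ is a circuit of $M$, equivalently $C_1=\sigma(1,s)$ and $c_1=e_s$. By \Cref{total_circuits} (whose hypothesis $n\geq s+t$ is implied by $n\geq s+2t-1$), it then suffices to show that $\sigma(k,k+s-1)$ is a circuit for every odd $k\in[n]$. The argument rests on a single propagation claim: \emph{if $\sigma(k,k+s-1)$ is a circuit, then so is $\sigma(k+2,k+s+1)$}. Granting this claim, an induction gives that $\sigma(1+2m,s+2m)$ is a circuit for all $m\geq 0$. Were $n$ odd, this would make $\sigma(1,s)$ and $\sigma(2,s+1)$ two distinct $s$-element circuits both containing $\sigma(2,s)$, contradicting \Cref{circ_unique}; so $n$ is even, the $s$-element circuits occur exactly at the odd positions, and the proof is complete.

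To prove the propagation claim, suppose $C_k=\sigma(k,k+s-1)$ is a circuit. Since $\sigma(k,k+s-1)$ is then an $s$-element circuit containing $\sigma(k+1,k+s-1)$, \Cref{circ_unique} forces $C_{k+1}=\sigma(k,k+s-1)$, and hence $c_{k+1}=e_k$. Now apply \Cref{consecutive} at index $k+1$ (its hypotheses follow from $n\geq s+2t-1$ together with $t\geq s$). Using in addition that $c_{k+2}\notin\sigma(k+2,k+s)$ and, by \Cref{2apart_neq}, that $c_{k+2}\neq c_k=e_{k+s-1}$, the conclusion of \Cref{consecutive} narrows the options to
\[
c_{k+2}\in\{e_{k-1},\;e_k,\;e_{k+1},\;e_{k+s+1}\}.
\]
The value $c_{k+2}=e_{k+1}$ is impossible: it would make $\sigma(k+1,k+s)$ an $s$-element circuit containing $\sigma(k+1,k+s-1)$, contradicting the uniqueness of $C_{k+1}=\sigma(k,k+s-1)$. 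The value $c_{k+2}=e_{k+s+1}$ is exactly the desired conclusion. So the crux is to rule out the two ``backward-reaching'' values $e_k$ and $e_{k-1}$.

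For $c_{k+2}=e_k$: circuit elimination applied to $C_{k+2}=\{e_k\}\cup\sigma(k+2,k+s)$ and $C_k=\sigma(k,k+s-1)$ at the common element $e_k$ produces a circuit $C'\subseteq\sigma(k+1,k+s)$; uniqueness of $C_{k+1}$ gives $|C'|<s$, and since $\sigma(k+1,k+s-1)$ and $\sigma(k+2,k+s)$ are independent, $C'$ must contain both $e_{k+1}$ and $e_{k+s}$. Orthogonality of $C'$ with the cocircuit $C^*_{k+s}$ then forces $c^*_{k+s}\in\sigma(k+1,k+s-1)\subseteq C_k$, so $C^*_{k+s}$ meets the circuit $C_k$ in exactly one element, a contradiction. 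For $c_{k+2}=e_{k-1}$, the same reasoning with $C^*_{k+s}$ first pins down $c^*_{k+s}=e_{k-1}$; one then reaches a contradiction by iterating this ``leftward drift'' along $\sigma$ --- or, equivalently, by running the argument in $M^*$ --- until $c_j$ (or $c^*_j$) for some $j$ is forced to lie in the consecutive block it is meant to avoid.

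I expect the exclusion of $c_{k+2}=e_{k-1}$ to be the real obstacle: \Cref{consecutive} isolates four candidates for $c_{k+2}$ but does not, on its own, rule out a circuit reaching two positions back past the element $e_k$ already ``claimed'' by $C_{k+1}$, and eliminating it seems to require a careful choice of cocircuits together with genuine use of both $t\geq s$ and the hypothesis $n\geq s+2t-1$. The remaining steps are routine applications of the lemmas already in place.
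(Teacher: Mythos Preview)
Your overall strategy matches the paper's: reduce to the propagation claim that if $\sigma(k,k+s-1)$ is a circuit then so is $\sigma(k+2,k+s+1)$, and use \Cref{consecutive} to reduce to the four candidates $c_{k+2}\in\{e_{k-1},e_k,e_{k+1},e_{k+s+1}\}$. However, the proposal is incomplete precisely where you suspect.

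First, a minor point: your elimination of $c_{k+2}=e_k$ via circuit elimination and orthogonality is correct but unnecessary. Since $c_{k+1}=e_k$, the equality $c_{k+2}=e_k$ would give $c_{k+1}=c_{k+2}$, immediately contradicting \Cref{consec_neq}. The paper disposes of this case in one line.

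The real gap is the case $c_{k+2}=e_{k-1}$. You correctly deduce $c^*_{k+s}=e_{k-1}$, but the phrase ``iterating this leftward drift\ldots\ or, equivalently, by running the argument in $M^*$'' is not a proof, and the dual suggestion does not work as stated: in $M^*$ the roles of $s$ and $t$ swap, so the hypothesis $t\ge s$ becomes $s\ge t$, and you cannot simply invoke symmetry. The paper's treatment of this case occupies roughly half the proof. One first shows inductively, via \Cref{consecutive} and \Cref{2apart_neq}, that $c_{k+m}=e_{k-m+1}$ for all $m\ge 2$ with $n\ge 2m+s-2$. If $t>s$, taking $m=s$ and considering the cocircuit through $\sigma(k-t+2,k)$ yields an orthogonality contradiction with the disjoint circuits $C_k$ and $C_{k+s}$. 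If $t=s$, one must additionally establish (via a separate argument pinning down $c_{k-1}=e_{k+s}$) a second inductive cascade $c_{k-m+1}=e_{k+s+m-2}$, and only then extract a contradiction from a carefully chosen cocircuit. None of this is routine, and your proposal does not supply it.
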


\begin{proof}
Let $i\in [n]$ such that $\sigma(i, i+s-1)$ is a circuit of $M$. We will show that $\sigma(i+2, i+s+1)$ is also a circuit. It then follows that $\sigma(i+2k, i+2k+s-1)$ is a circuit for all $k\geq 1$ and so, by \Cref{total_circuits}, $M$ is $(s, t)$-cyclic.

Since $\sigma(i, i+s-1)$ is a circuit, it follows by \Cref{circ_unique} that $c_i = e_{i+s-1}$ and $c_{i+1} = e_i$. By \Cref{consecutive}, either $c_{i+2} \in \sigma(i, i+s+1)$ or $c_{i+2} = e_{i-1}$ or $c_{i+2} = e_{i+1}$. Therefore, $c_{i+2} \in \{e_{i-1} ,e_i, e_{i+1}, e_{i+s+1}\}$. If $c_{i+2} = e_{i+s+1}$, then $\sigma(i+2, i+s+1)$ is a circuit, and we have the desired result. 
Furthermore, if $c_{i+2} = e_i$, then $c_{i+2} = c_{i+1}$, contradicting \Cref{consec_neq}. If $c_{i+2} = e_{i+1}$, then both $\sigma(i, i+s-1)$ and $\sigma(i+1, i+s)$ are circuits containing $\sigma(i+1, i+s-1)$, contradicting \Cref{circ_unique}. Therefore we may assume that $c_{i+2} = e_{i-1}$.

Now consider $c_{i+3}$. Since $c_{i+2}$ is not contained in $\sigma(i+1,i+s+2)$, it follows by \Cref{consecutive} that either $c_{i+3} = e_{i-2}$ or $c_{i+3} = e_i$. But $c_{i+1} = e_i$, so $c_{i+3} \neq e_i$ by \Cref{2apart_neq}. Therefore, $c_{i+3} = e_{i-2}$. More generally, suppose that $c_{i+k-2} = e_{i-k+3}$ and $c_{i+k-1} = e_{i-k+2}$, for some $k \geq 4$. If $n \geq 2k+s-2$, then $c_{i+k-1} \notin \sigma(i+k-2,i+k+s-1)$, and we can apply \Cref{consecutive} to show that $c_{i+k} \in \{e_{i-k+1},e_{i-k+3}\}$. But $c_{i+k-2} = e_{i-k+3}$, so $c_{i+k} = e_{i-k+1}$ \blue{by \Cref{2apart_neq}}.

\blue{By induction, we deduce}, for all $k \geq 2$ satisfying $n \geq 2k+s-2$, that $c_{i+k} = e_{i-k+1}$. \bloo{Suppose $t=s$. Taking $k=s$, we have that $n \geq 3s-2$, and so $c_{i+s} = e_{i-s+1}$. Therefore, assuming $t > s$, we have that $c_{i+s} = e_{i-s+1} \in \sigma(i-t+2,i)$.} This means that the $(t-1)$-element set $\sigma(i-t+2,i)$ intersects each of $C_i$ and $C_{i+s}$ in one element, and so $c^*_{i-t+2} \in C_i \cap C_{i+s}$. But $C_i$ and $C_{i+s}$ are disjoint, a contradiction. Thus, we may assume that $s=t$.

\blue{\bloo{We apply \Cref{consecutive} to $c_{i-1}$ with the aim of showing that $c_{i-1}=e_{i+s}$}. Suppose $c_{i-1} = e_j$. If $c_{i-1} \notin \sigma(i-2,i+s-1)$, then either $c_i = e_{j-1}$ or $c_i = e_{j+1}$. Since $c_i = e_{i+s-1}$, it follows that either $c_{i-1} \in \sigma(i-2,i+s-1)$ or $c_{i-1} = e_{i+s}$.} Now consider the $(t-1)$-element set $\sigma(i+s, i+s+t-2)$. This intersects $C_{i+2} = \sigma(i+2, i+s) \cup \{e_{i-1}\}$ in one element. So, either $c^*_{i+s} \in \sigma(i+2,i+s-1)$ or $c^*_{i+s} = e_{i-1}$. In the former case, $C^*_{i+s}$ intersects $\sigma(i, i+s-1)$ in one element, contradicting orthogonality. So $c^*_{i+s} = e_{i-1}$. But then $\sigma(i-1, i+s-3)$ intersects $C^*_{i+s}$ in one element, and so $c_{i-1} \in \sigma(i+s, i+s+t-2)$. Therefore, \bloo{$c_{i-1} \notin \sigma(i-2,i+s-1)$}, and so $c_{i-1} = e_{i+s}$.

Consider $c_{i-2}$. Since \bloo{$c_{i-1} \notin \sigma(i-3, i+s-2)$}, it follows by \Cref{consecutive} that either $c_{i-2}=e_{i+s-1}$ or $c_{i-2}=e_{i+s+1}$. But $c_i = e_{i+s-1}$ and so, by \Cref{2apart_neq}, $c_{i-2} = e_{i+s+1}$. More generally, suppose $c_{i-k+3} = e_{i+s+k-4}$ and $c_{i-k+2} = e_{i+s+k-3}$, for some $k \geq 4$. If $n \geq 2k+s-2$, then $c_{i-k+2} \notin \sigma(i-k, i-k+s+1)$, and we can apply Lemma~\ref{consecutive} to show that $c_{i-k+1} \in \{e_{i+s+k-4}, e_{i+s+k-2}\}$. But $c_{i-k+3} = e_{i+s+k-4}$, so $c_{i-k+1} = e_{i+s+k-2}$.

Therefore, by induction, for all $k \geq 2$ satisfying $n \geq 2k+s-2$, we have $c_{i+k} = e_{i-k+1}$ and $c_{i-k+1}=e_{i+s+k-2}$. If $s=t=3$, we have $c_{i+2}=e_{i-1}$ and $c_{i-1}=e_{i+3}$. By orthogonality between $C^*_i$ and $C_{i-1}$, we have \bloo{that either} $c^*_i=e_{i-1}$ or $c^*_i=e_{i+3}$. For either possibility, $C^*_i$ intersects $C_{i+2}$ in one element, a contradiction. Now assume that $s=t\ge 4$, and consider the $(t-1)$-element set $\sigma(i,i+t-2)$. This set intersects each of $\sigma(i-s+2, i)$ and $\sigma(i+t-2, i+s+t-4)$ in exactly one element. Now, since $n \geq 3s-4$, we have that $c_{i-s+2} = e_{i+2s-3}$ and, since $n \geq s+2t-6$, we have that $c_{i+t-2}=e_{i-t+3}=e_{i-s+3}$. Neither $c_{i-s+2}$ nor $c_{i+t-2}$ are contained in $\sigma(i,i+t-2)$, and so $c^*_{i} \in C_{i-s+2} \cap C_{i+t-2} = \{e_{i-s+3}\}$. But now, \blue{since $c_{i-s+1} = e_{i+2s-2}$, we have that} $C_{i-s+1} = \sigma(i-s+1,i-1) \cup \{e_{i+2s-2}\}$, \bloo{which} intersects $C^*_i$ in one element. This contradiction to orthogonality completes the proof of the lemma.
\end{proof}

\begin{lemma}
\label{gap1}
Let $n \geq s + 2t - 1$, and suppose that $t\ge s$. If \bloo{$c_{i}=e_{i+s}$, then $c_{i+1}=e_{i+s+1}$}.
\end{lemma}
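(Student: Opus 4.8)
The statement to prove is Lemma \ref{gap1}: if $n \geq s+2t-1$, $t \geq s$, and $c_i = e_{i+s}$, then $c_{i+1} = e_{i+s+1}$. First I would set up the situation: we have the $s$-element circuit $C_i = \sigma(i, i+s-2) \cup \{e_{i+s}\}$, so $C_i = \sigma(i, i+s-2) \cup \{e_{i+s}\} = \{e_i, e_{i+1}, \ldots, e_{i+s-2}, e_{i+s}\}$ — note $e_{i+s-1}$ is conspicuously absent. The goal is to pin down $c_{i+1}$, the unique element completing $\sigma(i+1, i+s-1)$ to the circuit $C_{i+1}$. The natural tool is \Cref{consecutive} applied with the role of ``$i$'' played by $i$: since $c_i = e_{i+s} \notin \sigma(i-1, i+s)$ (here I would check $e_{i+s}$ is genuinely outside $\sigma(i-1,i+s)$ — wait, $e_{i+s} \in \sigma(i-1,i+s)$, so this needs care; more precisely one wants to rule out case (i) of \Cref{consecutive}, or instead note that $c_i = e_{i+s}$ lies at the boundary), \Cref{consecutive} gives that either $c_i$ and $c_{i+1}$ both lie in $\sigma(i-1,i+s)$, or $c_{i+1} = e_{i+s+1}$, or $c_{i+1} = e_{i+s-1}$.

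The second step is to eliminate the two unwanted alternatives. If $c_{i+1} = e_{i+s-1}$, then $C_{i+1} = \sigma(i+1, i+s-1) \cup \{e_{i+s-1}\} = \sigma(i+1, i+s-1)$, i.e.\ $\{e_{i+1}, \ldots, e_{i+s-1}\}$ — but that is an $(s-1)$-element set being a circuit, which is impossible since $\sigma(i+1,i+s-1)$ is a proper subset of the $s$-element circuit $C_{i+1}$ (or simply: $c_{i+1}$ must not already lie in $\sigma(i+1,i+s-1)$, contradicting $e_{i+s-1} \in \sigma(i+1,i+s-1)$). So that case dies immediately. The remaining obstacle is ruling out case (i), namely that $c_{i+1} \in \sigma(i-1, i+s) \setminus \sigma(i+1, i+s-1) = \{e_{i-1}, e_i, e_{i+s}\}$ (again discarding those already in $\sigma(i+1,i+s-1)$). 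Here $c_{i+1} = e_{i+s} = c_i$ contradicts \Cref{consec_neq}; and $c_{i+1} = e_i$ also needs ruling out, as does $c_{i+1} = e_{i-1}$.

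For these last cases I expect to use orthogonality with cocircuits, mirroring the arguments in \Cref{gap0} and \Cref{2apart_neq}. The key point is that $e_{i+s-1}$ is missing from $C_i$, which constrains where cocircuits through $\sigma$-intervals near position $i+s-1$ can reach. Concretely, suppose $c_{i+1} = e_{i-1}$ or $c_{i+1} = e_i$; then $C_{i+1} \subseteq \sigma(i-1, i+s-1)$, so $C_{i+1}$ and $C_i$ are two circuits both living in $\sigma(i-1, i+s)$ (in fact within a window of $s+1$ consecutive elements) but with $C_i \ni e_{i+s}$ and $C_{i+1} \not\ni e_{i+s}$. Circuit elimination on $C_i \cup C_{i+1}$ should produce a circuit avoiding $e_{i+s}$, hence contained in $\sigma(i-1, i+s-1)$, and then I would play the cocircuit $\sigma(i+s-1, i+s+t-3)$ or $\sigma(i+s, i+s+t-2)$ against it: since $n \geq s+2t-1 > s+t-2$, such a $(t-1)$-interval meets the small circuit in at most one element, forcing a $c^*$ into the circuit and then into $C_i$ or $C_{i+1}$, yielding a one-element intersection with a circuit — a contradiction. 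The main obstacle, and where I would spend the most care, is organizing the case analysis in Step 2 cleanly and making sure the index ranges genuinely force the disjointness claims; the hypothesis $n \geq s+2t-1$ (together with $t \geq s$) should be exactly what is needed so that the relevant $(t-1)$-element cocircuit intervals are disjoint from the circuit windows when they need to be. Once cases (i) and the $e_{i+s-1}$ case are killed, the lemma follows.
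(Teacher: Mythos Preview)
Your overall structure is right and matches the paper: apply \Cref{consecutive} with $j = i+s$ to reduce to $c_{i+1} \in \{e_{i-1}, e_i, e_{i+s-1}, e_{i+s}, e_{i+s+1}\}$, then discard $e_{i+s-1}$ (it lies in $\sigma(i+1,i+s-1)$, so cannot be $c_{i+1}$) and $e_{i+s}$ (by \Cref{consec_neq}). What remains is to rule out $c_{i+1} \in \{e_{i-1}, e_i\}$.

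For $c_{i+1} = e_i$ you are overcomplicating things: then $C_{i+1} = \sigma(i, i+s-1)$ and $C_i = \sigma(i,i+s-2) \cup \{e_{i+s}\}$ are two distinct $s$-element circuits through $\sigma(i, i+s-2)$, contradicting \Cref{circ_unique} directly. More seriously, your plan for $c_{i+1} = e_{i-1}$ contains an error. Circuit elimination only guarantees a circuit avoiding a chosen element of the \emph{intersection} $C_i \cap C_{i+1}$; since $e_{i+s} \in C_i \setminus C_{i+1}$, there is no reason the eliminated circuit avoids $e_{i+s}$. Without a circuit confined to $\sigma(i-1, i+s-1)$, the orthogonality step you sketch does not go through, and even using $C_{i+1}$ itself against a single cocircuit interval like $C^*_{i+s-1}$ only pins $c^*_{i+s-1}$ down to $\sigma(i+1,i+s-2)$, which is not yet a contradiction.

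The paper handles $c_{i+1} = e_{i-1}$ by a direct orthogonality argument with two cocircuits, no circuit elimination. The interval $\sigma(i-t+1, i-1)$ meets $C_{i+1}$ in $\{e_{i-1}\}$ only and misses $C_i$ (here $n \ge s+2t-1$ is used), forcing $c^*_{i-t+1} \in C_{i+1} \setminus C_i$ and hence $c^*_{i-t+1} = e_{i+s-1}$. Symmetrically, $\sigma(i+s, i+s+t-2)$ meets $C_i$ in $\{e_{i+s}\}$ and misses $C_{i+1}$, giving $c^*_{i+s} = e_i$. Now the $(s-1)$-set $\sigma(i+2, i+s)$ meets each of $C^*_{i-t+1}$ and $C^*_{i+s}$ in exactly one element, so $c_{i+2}$ would have to lie in both cocircuits; but these are disjoint, a contradiction.
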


\begin{proof}
As $t\ge s$, it follows by \Cref{consecutive} that either $c_{i+1} \in \sigma(i-1,i+s)$, or $c_{i+1} = e_{i+s+1}$. Therefore, $c_{i+1} \in \{e_{i-1},e_i,e_{i+s},e_{i+s+1}\}$. By \Cref{consec_neq}, $c_{i+1} \neq e_{i+s}$. Also, if $c_{i+1} = e_i$, then both $\sigma(i, i+s-1)$ and $\sigma(i, i+s-2) \cup \{e_{i+s}\}$ are circuits containing $\sigma(i, i+s-2)$, contradicting \Cref{circ_unique}.

Suppose $c_{i+1} = e_{i-1}$, and consider the $(t-1)$-element set $\sigma(i-t+1, i-1)$. As $n\ge s+2t-1$, this set intersects $C_{i+1}$ in exactly one element, but does not intersect $C_i$. Therefore, \bloo{$c^*_{i-t+1} \in C_{i+1}$}, but not in \bloo{$c^*_{i-t+1} \notin C_i$}; the only possibility is $c^*_{i-t+1} = e_{i+s-1}$.

Now consider the $(t-1)$-element set $\sigma(i+s, i+s+t-2)$. As $n\ge s+2t-1$, this set intersects $C_i$ in exactly one element, and does not intersect $C_{i+1}$. Therefore, $c^*_{i+s}=e_i$. Finally, consider the $(s-1)$-element set $\sigma(i+2, i+s)$. This last set intersects each of $C^*_{i+s}$ and $C^*_{i-t+1}$ in exactly one element. But $C^*_{i+s}$ and $C^*_{i-t+1}$ are disjoint, a contradiction. Therefore, $c_{i+1} = e_{i+s+1}$.
\end{proof}

\begin{lemma}
\label{allgap}
Let $n \geq s+2t-1$ and $t \geq s$. \bloo{If $c_i=e_{i+s-1+k}$ for some $1 \leq k < n-s$, then $c_{i+1} = e_{i+s+k}$}.
\end{lemma}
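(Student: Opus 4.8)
The plan is to prove the statement by induction on $k$; the case $k=1$ is precisely \Cref{gap1}, so for the inductive step fix $k$ with $2 \le k < n-s$, assume the statement holds for every $k'$ with $1 \le k' < k$, and suppose $c_i = e_{i+s-1+k}$ for some $i \in [n]$.

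First I would apply \Cref{consecutive} with $c_i = e_j$, where $j = i+s-1+k$; its hypotheses $n \ge s+2t-2$ and $n \ge 2s+t-4$ both follow from $n \ge s+2t-1$ and $t \ge s$. Because $2 \le k < n-s$, the element $c_i = e_{i+s-1+k}$ does not lie in $\sigma(i-1, i+s)$, so the first alternative of \Cref{consecutive} cannot hold, and hence either $c_{i+1} = e_{j+1} = e_{i+s+k}$ --- which is the desired conclusion --- or $c_{i+1} = e_{j-1} = e_{i+s-2+k}$. It therefore remains to rule out the alternative $c_{i+1} = e_{i+s-2+k}$. The key observation is that, rewritten as $c_{i+1} = e_{(i+1)+s-1+(k-2)}$, this alternative places $c_{i+1}$ in exactly the configuration of the lemma at the index $i+1$, but with the parameter $k$ replaced by $k-2$.

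To finish I would split into the cases $k \ge 3$ and $k=2$. If $k \ge 3$, then $k-2$ is a positive integer strictly smaller than $k$ (and still smaller than $n-s$), so the induction hypothesis applied at the index $i+1$ with parameter $k-2$ gives $c_{i+2} = e_{(i+1)+s+(k-2)} = e_{i+s-1+k} = c_i$, contradicting \Cref{2apart_neq}. If $k=2$, then $c_{i+1} = e_{i+s}$, so $C_{i+1} = \sigma(i+1, i+s)$ and hence $\sigma(i+1, i+s)$ is a circuit of $s$ consecutive elements; by \Cref{gap0}, $M$ is $(s,t)$-cyclic and $\sigma$ is an $(s,t)$-cyclic ordering of $M$. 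Then $n$ is even by \Cref{even_n}, so iterating part~(iii) of the definition of an $(s,t)$-cyclic ordering around $\sigma$ shows that $\sigma(i-1, i+s-2)$ is also a circuit; as this is an $s$-element circuit containing $\sigma(i, i+s-2)$, \Cref{circ_unique} forces $C_i = \sigma(i-1, i+s-2)$ and so $c_i = e_{i-1}$. Since $0 < s+2 < n$, this contradicts $c_i = e_{i+s+1}$. In either case the alternative $c_{i+1} = e_{i+s-2+k}$ is impossible, so $c_{i+1} = e_{i+s+k}$, completing the induction.

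The only genuine care needed, exactly as in the proof of \Cref{gap1}, is to check that none of the index computations wrap around $\sigma$ in a way that would invalidate a claimed disjointness or single-element intersection; the hypotheses $n \ge s+2t-1$ and $1 \le k < n-s$ are precisely what guarantee this. The one ingredient beyond \Cref{gap1} is recognising that the alternative $c_{i+1} = e_{i+s-2+k}$ reproduces the lemma's configuration at $i+1$ with $k$ decreased by $2$, so that for $k \ge 3$ the induction hypothesis applies and collapses everything to \Cref{2apart_neq}, while the residual case $k=2$ is disposed of by \Cref{gap0}.
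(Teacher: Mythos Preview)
Your proof is correct and follows essentially the same approach as the paper: induction on $k$ with base case $k=1$ from \Cref{gap1}, the case $k\ge 3$ handled by applying the induction hypothesis at $k-2$ to force $c_{i+2}=c_i$ and invoke \Cref{2apart_neq}, and the case $k=2$ eliminated via \Cref{gap0} and \Cref{circ_unique}. Your treatment of the $k=2$ case is a little more explicit than the paper's---you spell out that $n$ is even and that iterating (iii) yields $\sigma(i-1,i+s-2)$ as a second $s$-element circuit through $\sigma(i,i+s-2)$---whereas the paper simply asserts that $(s,t)$-cyclicity ``contradicts the uniqueness of the circuit containing $\sigma(i,i+s-2)$''; the content is the same.
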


\begin{proof}
The proof is by induction on $k$. If $k=1$, then the result follows immediately from \Cref{gap1}. Suppose $k = 2$, \blue{so that, $c_i = e_{i+s+1}$}. By \Cref{consecutive}, either $c_{i+1} = e_{i+s}$ or $c_{i+1} = e_{i+s+2}$. If $c_{i+1} = e_{i+s}$, then $\sigma(i+1,i+s)$ is a circuit. But, by \Cref{gap0}, this implies $M$ is $(s, t)$-cyclic, which, by \Cref{circ_unique}, contradicts the uniqueness of the circuit containing $\sigma(i, i+s-2)$. So $c_{i+1} = e_{i+s+2}$, and the lemma holds for $k=2$.

Now let $k\ge 3$, and suppose that, \bloo{for all $i' \in [n]$, if $c_{i'} = e_{i'+s-1+(k-2)}$, then $c_{i'+1} = e_{i'+s+(k-2)}$.} We shall complete the proof by proving that the lemma holds for $k$. So, let $c_i=e_{i+s-1+k}$. Then, by \Cref{consecutive}, either $c_{i+1} = e_{i+s-2+k}$ or $c_{i+1} = e_{i+s+k}$. If $c_{i+1}=e_{i+s-2+k}$, then, by the induction assumption, $c_{i+2} = e_{i+s-1+k}$. But now $c_{i+2} = c_i$. This contradiction to \Cref{2apart_neq} shows that $c_{i+1} = e_{i+s+k}$, and completes the proof of the lemma.
\end{proof}

At last we are ready to prove \Cref{par_to_tot}.

\begin{proof}[Proof of \Cref{par_to_tot}]
Since $\sigma$ is an $(s, t)$-cyclic ordering of $M$ if and only if $\sigma$ is a $(t, s)$-cyclic ordering of $M^*$, we may assume, without loss of generality, that $t \geq s$. For the purposes of obtaining a contradiction, suppose there is no $j \in [n]$ such that $\sigma(j, j+s-1)$ is a circuit of $M$. Since $\sigma$ is a nearly $(s, t)$-cyclic ordering of $M$, it follows by \Cref{allgap} that there exists \blue{$1 \leq k < n-s$} such that, for all $i \in [n]$, the set $\sigma(i,i+s-2) \cup \{e_{i+s-1+k}\}$ is a circuit. In particular, by \Cref{circ_unique}, $C_i=\sigma(i, i+s-2)\cup \{e_{i+s-1+k}\}$. Take one such $i$, and consider the $(t-1)$-element set $\sigma(i, i+t-2)$. As $n\ge 2s+t-3$, the $(s-1)$-element sets $\sigma(i-s+1, i-1)$ and $\sigma(i+t-1, i+s+t-3)$ are disjoint, so at least one of these two sets does not contain $c^*_i$.
We will establish a contradiction for when $c^*_i\not\in \sigma(i-s+1, i-1)$. A symmetrical argument applies when $c^*_i\not\in \sigma(i+t-1, i+s+t-3)$. \blue{So suppose} $c^*_i \notin \sigma(i-s+1,i-1)$. Then $\sigma(i-s+2, i)$ intersects $C^*_i$ in exactly one element. Therefore, either $c_{i-s+2}=c^*_i$ or $c_{i-s+2} \in \sigma(i+1, i+t-2)$.

First assume that $c_{i-s+2} \in \sigma(i+1, i+t-2)$. We know that $c_{i-s+2} \neq e_{i+1}$, \blue{for otherwise} $\sigma(i-s+2, i+1)$ is a circuit. So $c_{i-s+2} \in \sigma(i+2,i+t-2)$. But now, by \Cref{allgap}, $c_{i-s+1} \in \sigma(i+1, i+t-3)$, and so $C_{i-s+1}$ and $C^*_i$ intersect in exactly one element, a contradiction.

Now assume that $c_{i-s+2} = c^*_i$. Consider the $(s-1)$-element set $\sigma\left(i+t-2, i+s+t-4\right)$. Suppose \bloo{$\left.c^*_i \notin \sigma(i+t-1, i+s+t-3)\right.$}. Then, by orthogonality, either $c_{i+t-2} = c^*_i$ or $c_{i+t-2} \in \sigma(i,i+t-3)$. But $c_{i+t-2} \neq e_{i+t-3}$, since then $\sigma(i+t-3, i+s+t-4)$ is a circuit, and $c_{i+t-2} \notin \sigma(i, i+t-4)$ since then $C_{i+t-1}$ and $C^*_i$ intersect in exactly one element, \blue{by \Cref{allgap}}. Furthermore, $c_{i+t-2} \neq c^*_i$, since then $c_{i+t-2} = c_{i-s+2}$, contradicting Lemmas~\ref{circ_unique} and~\ref{allgap}. Therefore, $c^*_i \in \sigma(i+t-1, i+s+t-3)$.

It now follows that \bloo{$c_{i-s+2} = e_{i+t-2+\ell}$ for some $1 \leq \ell \leq s-1$}. Therefore, by \Cref{allgap}, $c_{i-s+2-\ell} = e_{i+t-2}$. Furthermore, as $n \geq 3s+t-5$, the $(s-1)$-element set $\sigma(i-s+2-\ell, i-\ell)$ does not contain $c^*_i = e_{i+t-2+\ell}$ and does not intersect $\sigma(i, i+t-2)$. So $C_{i-s+2-\ell}$ and $C^*_i$ intersect in exactly one element. This contradiction to orthogonality establishes that $M$ must contain a circuit $\sigma(j, j+s-1)$ for some $j\in [n]$, and so, by \Cref{gap0}, $\sigma$ is an $(s, t)$-cyclic ordering of $M$. This completes the proof of the theorem.
\end{proof}

\section{Proof of Theorem~\ref{free}}
\label{freethm}

In this section, we prove Theorem~\ref{free}. We begin by defining a class of matroids that contains, for all positive integers $s$ exceeding one and all positive even integers $n$, the matroid $\Psi^n_s$. The proof of Theorem~\ref{free} is a consequence of a more general weak-map result, namely Theorem~\ref{weak_map}, that we establish for this class.

\blue{Recall that for} a vertex $v$ of a graph $G$, we denote the set of vertices of $G$ adjacent to $v$, that is, the {\em neighbours of $v$}, by $N(v)$. More generally, for a subset $U$ of vertices of $G$, the {\em neighbours of $U$}, denoted $N(U)$, is
$$\bigcup_{v\in U} N(v).$$

\bloo{We next define a multi-path matroid. Let $E$ be a set of $n$ elements, and suppose that $\sigma=(e_1,e_2,\ldots,e_n)$ is a cyclic ordering of $E$. Let $m$ be a positive integer \bloo{exceeding one}. Choose distinct elements $x_1, x_2, \ldots, x_m \in [n]$ and distinct elements $y_1,y_2,\ldots,y_m \in [n]$ such that $e_{x_i} \in \sigma(x_{i-1},x_{i+1})$ and $e_{y_i} \in \sigma(y_{i-1},y_{i+1})$ for all $i \in [m]$, where subscripts  \bloo{of $x$ and $y$} are interpreted modulo $m$, and, furthermore, the intervals $\sigma(x_i,y_i)$ form an anti-chain of $\sigma$, that is, there is no $i, i' \in [m]$ such that $\sigma(x_i,y_i) \subseteq \sigma(x_{i'},y_{i'})$.} Let $G$ denote the bipartite graph with parts $E$ and $[m]$, and whose set of edges satisfy $\bloo{N(i)} = \sigma(x_i, y_i)$ for all $i \in [m]$. The transversal matroid on ground set $E$ with presentation
$$\mathscr I=(N(1), N(2), \ldots, N(m))$$
is called a {\em multi-path matroid} and is denoted by $M[\mathscr I]$. \blue{Let $M^*[\mathscr I]$ denote the dual of $M[\mathscr I]$, and observe that, for all $i\in [m]$, the set $\sigma(x_i, y_i)$ is a circuit of $M^*[\mathscr I]$. Multi-path matroids were introduced in~\cite{bon07}.}

\blue{As an example, let $s$ be a positive integer exceeding one and let $n$ be a positive even integer, and suppose that $\sigma=(e_1, e_2, \ldots, e_n)$ is a cyclic ordering of $E$ and $m=\frac{n}{2}$. By choosing \bloo{$x_i=2i-1$ and $y_i=2i+s-2$} for all $i\in \left[\frac{n}{2}\right]$, we have that $G\cong G^n_s$, the bipartite graph defined in the introduction, and $M^*[\mathscr I] \cong \Psi^n_s$.}

The initial goal of this section is to establish \Cref{weak_map} which says that, up to isomorphism, $M^*[\mathscr I]$ is \blue{at least as free as} any matroid on the same ground set satisfying a certain rank condition; that is, up to isomorphism, every such matroid is a weak-map image of $M^*[\mathscr I]$.


A subset $X \subseteq E$ is independent in $M^*[\mathscr I]$ if and only if $E-X$ is cospanning. In other words, $X$ is independent in $M^*[\mathscr I]$ if and only if there is a complete matching from $[m]$ into $E-X$. By Hall's Theorem~\cite{hal35}, this is true precisely if, for all subsets $J$ of $[m]$, we have that $|N(J)-X| \geq |J|$. We repeatedly use this fact in the proofs in this section. To ease reading, in the statements of these lemmas and theorem, the multi-path matroid $M[\mathscr I]$ has ground set $E$ and is constructed as above.

\begin{lemma} \label{psi_rank}
\bloo{$r(M^*[\mathscr I]) = |E|-m$.}
\end{lemma}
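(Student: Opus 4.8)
The plan is to show that $r(M^*[\mathscr I]) = |E| - m$ by establishing that $M[\mathscr I]$ has rank exactly $m$, from which the dual rank formula $r(M^*[\mathscr I]) = |E| - r(M[\mathscr I])$ gives the result immediately. Since $M[\mathscr I]$ is the transversal matroid with presentation $\mathscr I = (N(1), N(2), \ldots, N(m))$, its rank is the maximum size of a partial transversal of this set system, which is at most $m$; so the content is to produce a system of distinct representatives (SDR) for $(N(1), \ldots, N(m))$, i.e.\ a matching in $G$ saturating all of $[m]$.

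First I would invoke Hall's Theorem: an SDR exists if and only if $|N(J)| \geq |J|$ for every $J \subseteq [m]$. To verify this, I would use the anti-chain structure of the intervals $\sigma(x_i, y_i)$ together with the fact that the left endpoints $x_1, \ldots, x_m$ are distinct and cyclically ordered (as are the right endpoints $y_1, \ldots, y_m$), encoded in the condition $e_{x_i} \in \sigma(x_{i-1}, x_{i+1})$. The key combinatorial observation is that for a cyclically ordered family of intervals whose endpoints interleave in this monotone fashion, the union $N(J) = \bigcup_{i \in J} \sigma(x_i, y_i)$ of any subfamily is itself a union of arcs, and one can count: because no interval contains another, each interval $\sigma(x_i, y_i)$ in the subfamily contributes at least one element not covered by the others near its "own" endpoint. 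More carefully, I would argue that for $J \subseteq [m]$ with $|J| = k$, the set $N(J)$ contains at least $k$ distinct elements by exhibiting, for each $i \in J$, an element that is "private" to $i$ — for instance, using the distinctness of the $x_i$'s and the anti-chain property to show the left endpoints $\{e_{x_i} : i \in J\}$ are $k$ distinct elements all lying in $N(J)$; one checks $e_{x_i} \in \sigma(x_i, y_i) \subseteq N(J)$ holds since $x_i \in \sigma(x_i, y_i)$ always, provided $\sigma(x_i, y_i)$ is nonempty, which it is as it contains $e_{x_i}$.

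Actually the cleanest route: since $x_1, x_2, \ldots, x_m$ are distinct elements of $[n]$ and $e_{x_i} \in N(i)$ for each $i$, the map $i \mapsto e_{x_i}$ is itself a system of distinct representatives for $(N(1), \ldots, N(m))$, so $\{e_{x_1}, \ldots, e_{x_m}\}$ is a partial transversal of size $m$ and hence an independent set of $M[\mathscr I]$ of size $m$. Combined with the trivial upper bound $r(M[\mathscr I]) \leq m$ (a transversal matroid with an $m$-set presentation has rank at most $m$), this gives $r(M[\mathscr I]) = m$. Then $r(M^*[\mathscr I]) = |E| - r(M[\mathscr I]) = |E| - m$, as claimed.

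The main obstacle — or rather the thing to be careful about — is whether the $e_{x_i}$ really are distinct as \emph{elements} of $E$, not merely whether the indices $x_i$ are distinct; but this is exactly guaranteed, since distinct indices in $[n]$ label distinct elements of the cyclically ordered set $\sigma = (e_1, \ldots, e_n)$. So in fact this lemma requires essentially no heavy lifting: the distinctness of the left endpoints $x_1, \ldots, x_m$, which is part of the definition of a multi-path matroid, directly yields the SDR. I would present the two-line argument: $\{e_{x_1}, \ldots, e_{x_m}\}$ is a partial transversal of $\mathscr I$, so $r(M[\mathscr I]) \geq m$; and $r(M[\mathscr I]) \leq m$ since $\mathscr I$ has $m$ sets; hence $r(M^*[\mathscr I]) = |E| - m$.
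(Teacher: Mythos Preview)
Your proposal is correct and, once you arrive at the ``cleanest route,'' it is essentially identical to the paper's proof: both show $r(M[\mathscr I])=m$ by noting the trivial upper bound and exhibiting the matching $i\mapsto e_{x_i}$ afforded by the distinctness of the $x_i$, then apply the dual rank formula. The initial Hall's-Theorem detour is unnecessary, as you yourself observe.
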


\begin{proof}
\bloo{It is sufficient to prove that $r(M[\mathscr I]) = m$. Let $X \subseteq E$ be a set of $m+1$ elements. Clearly there is no matching of $X$ into $[m]$, so $X$ is dependent. Therefore, $r(M[\mathscr I]) \leq m$. For all $i \in [m]$, we have that $\{i,e_{x_i}\}$ is an edge of the bipartite graph $G$. Therefore, $\{\{1,e_{x_1}\},\{2,e_{x_2}\},\ldots,\{m,e_{x_m}\}\}$ is a matching of $G$. Hence $r(M[\mathscr I]) \geq m$, so $r(M[\mathscr I]) = m$, completing the proof.}
\end{proof}

\begin{lemma}
\label{circuits1}
Let $C$ be a circuit of $M^*[\mathscr I]$. Let $J \subseteq [m]$ such that $\left.|N(J)-C| < |J|\right.$. Then $C$ is a subset of $N(J)$ containing $|N(J)|-|J|+1$ elements.
\end{lemma}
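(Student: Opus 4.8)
The plan is to argue entirely from the Hall-type criterion for independence in $M^*[\mathscr I]$ recorded immediately before the lemma: a set $X \subseteq E$ is independent in $M^*[\mathscr I]$ if and only if $|N(K)-X| \geq |K|$ for every subset $K$ of $[m]$. Since $C$ is a circuit, every proper subset $C-e$ is independent, so this inequality holds with $C-e$ in place of $X$ for all such $e$ and all $K$; on the other hand, the hypothesis $|N(J)-C| < |J|$ is precisely a Hall-failure witnessing that $C$ itself is dependent.

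First I would show $C \subseteq N(J)$. If some $e \in C$ lies outside $N(J)$, then $N(J)-(C-e) = N(J)-C$, so $|N(J)-(C-e)| = |N(J)-C| < |J|$, contradicting the independence of $C-e$. Hence $C \subseteq N(J)$, and therefore $|N(J)-C| = |N(J)|-|C|$.

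Next I would pin down $|C|$ by applying the Hall criterion twice. From $|N(J)|-|C| = |N(J)-C| < |J|$ we obtain $|C| \geq |N(J)|-|J|+1$. For the reverse inequality, choose any $e \in C$ (possible since a circuit is non-empty); as $C \subseteq N(J)$ and $e \notin C-e$, we have the disjoint-union identity $N(J)-(C-e) = (N(J)-C) \cup \{e\}$, so $|N(J)-(C-e)| = |N(J)|-|C|+1$. Independence of $C-e$ forces this to be at least $|J|$, giving $|C| \leq |N(J)|-|J|+1$. Combining the two bounds yields $|C| = |N(J)|-|J|+1$, as required.

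I do not anticipate a genuine obstacle: once one decides to exploit the Hall condition on proper subsets of $C$, the whole argument is a short counting computation. The one point needing care is that the disjoint-union identity $N(J)-(C-e) = (N(J)-C)\cup\{e\}$ uses $e \in N(J)$, which is only available after the containment $C \subseteq N(J)$ has been established; so the steps must be carried out in this order.
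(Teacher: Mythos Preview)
Your proof is correct and follows essentially the same approach as the paper's: first establish $C\subseteq N(J)$ by observing that an element of $C$ outside $N(J)$ would make $C-e$ violate Hall's condition, then use the Hall condition on $C-e$ (now with $e\in N(J)$) to pin down $|N(J)-C|=|J|-1$. The only cosmetic difference is that the paper phrases the second step as a single contradiction (supposing $|N(J)-C|<|J|-1$) rather than as two inequalities, but the computation is identical.
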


\begin{proof}
If $C$ is not a subset of $N(J)$, then there exists an element $e$ of $C$ such that $e\not\in N(J)$. Then
$$|N(J)-(C-\{e\})| = |N(J)-C| < |J|.$$
But this implies that $C-\{e\}$ is dependent, a contradiction. Thus $C$ is a subset of $N(J)$.

To see that $C$ contains $|N(J)|-|J|+1$ elements, suppose that $\left.|N(J)-C| < |J|-1\right.$, and let $e \in C$. Then, as $C$ is a subset of $N(J)$, we have
$$|N(J)-(C-\{e\})| = |N(J)-C|+1 < |J|.$$
Again, this implies $C-\{e\}$ is dependent, a contradiction. Thus
$$|N(J)|-|C| = |N(J)-C| = |J|-1.$$
Rearranging this last equation gives $|C| = |N(J)|-|J|+1$, thereby completing the proof of the lemma.
\end{proof}


\begin{lemma}
\label{circuits}
Let $C$ be a circuit of $M^*[\mathscr I]$. Then either $C$ has $|E|-m+1$ elements or there \bloo{exist} $i, j\in [m]$ such that each of the following hold:
\bloo{\begin{enumerate}[{\rm (i)}]
\item $N([i, j]) = \sigma(x_i, y_j)$,
\item $C$ is a subset of $N([i, j])$ containing $|N([i, j])|-|[i,j]|+1$ elements,
\item either $i = j$, or $N([i, j])-N([i+1, j]) \subseteq C$,
\item either $i = j$, or $N([i, j])-N([i, j-1]) \subseteq C$, and
\item $\sigma(x_i, y_j) \subseteq \cl(C)$,
\end{enumerate}}
\end{lemma}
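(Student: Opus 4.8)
The plan is to work entirely with the Hall-type description of independence in $M^*[\mathscr I]$: a set $X\subseteq E$ is independent precisely when $|N(J)-X|\ge|J|$ for every $J\subseteq[m]$. Since $C$ is a circuit it is dependent, so there is a set $J\subseteq[m]$ with $|N(J)-C|<|J|$; I will call such a set a \emph{witness}, and among all witnesses fix one, $J$, of minimum cardinality. By \Cref{circuits1}, $C\subseteq N(J)$ and $|C|=|N(J)|-|J|+1$, so $|N(J)-C|=|J|-1$; more generally a set $A$ is a witness exactly when $|N(A)-C|=|A|-1$. Because $C$ is a circuit, $C-\{e\}$ is independent for each $e\in C$, and feeding this into Hall's condition gives $|N(A)-C|\ge|A|-1$ for \emph{every} $A\subseteq[m]$: writing $g(A)=|N(A)-C|-|A|$, we have $g\ge-1$ everywhere and $g(A)=-1$ iff $A$ is a witness. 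Two consequences I would record at once. First, if $c\notin A$ and $N(c)\subseteq N(A)$ then $g(A\cup\{c\})=g(A)-1$, so no witness contains the neighbourhood of an index lying outside it. Second, if a witness is the disjoint union of nonempty sets $A$ and $B$ with $N(A)\cap N(B)=\emptyset$, then $g(A)+g(B)=g(A\cup B)=-1$ and $g\ge-1$ force one of $A,B$ to be a witness, necessarily of smaller cardinality. Finally, the first alternative of the lemma occurs when $N(J)=E$: then $|E|-|C|=|J|-1\le m-1$, so $|C|\ge|E|-m+1$, and since $C$ is a circuit and $r(M^*[\mathscr I])=|E|-m$ by \Cref{psi_rank}, $|C|=|E|-m+1$. (We may assume $M[\mathscr I]$ has no loops, i.e.\ $N([m])=E$, as loops of $M[\mathscr I]$ are coloops of $M^*[\mathscr I]$ and lie in no circuit; in particular $J=[m]$ lands in this case.)

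From now on assume $N(J)\ne E$, so $J\ne[m]$. The heart of the proof is to show $J$ is a cyclic interval $[i,j]$ of $[m]$ with $N([i,j])=\sigma(x_i,y_j)$, which is (i). By the second consequence above and the minimality of $J$, the ``overlap graph'' on $J$ (vertices $J$, an edge between $k,k'$ when $N(k)\cap N(k')\ne\emptyset$) is connected. A connected union of finitely many cyclic intervals is a cyclic interval or all of $E$; as $N(J)\ne E$ it is a cyclic interval, say $\sigma(p,q)$. Using that the endpoints $x_k$ and $y_k$ each advance monotonically around the cycle as $k$ runs through $[m]$, together with the anti-chain condition and connectedness (a connected run of intervals has consecutive members overlapping), the set $\{k\in[m]:N(k)\subseteq\sigma(p,q)\}$ is a cyclic interval $[i,j]$ with $N([i,j])=\sigma(p,q)$, $p=x_i$ and $q=y_j$. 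Finally $J=\{k:N(k)\subseteq\sigma(p,q)\}$: the inclusion $\subseteq$ is clear, and if some $k\notin J$ had $N(k)\subseteq N(J)$ this would contradict the first consequence above. Hence $J=[i,j]$ and $N([i,j])=\sigma(x_i,y_j)$, giving (i); and (ii) is then immediate from \Cref{circuits1}.

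The remaining items are short. For (iii), assume $i\ne j$ and set $J'=[i+1,j]=J-\{i\}$; since $J$ has minimum cardinality among witnesses, $J'$ is not a witness, so $g(J')\ge 0$, i.e.\ $|N(J')-C|\ge|J|-1=|N(J)-C|$; as $N(J')\subseteq N(J)$ this forces $N(J)-C=N(J')-C$, hence $N([i,j])-N([i+1,j])\subseteq C$, and (iv) follows symmetrically with $J''=[i,j-1]$. For (v), if $\sigma(x_i,y_j)=C$ there is nothing to prove; otherwise pick $e\in\sigma(x_i,y_j)-C$, and since $C\cup\{e\}\subseteq N([i,j])$ we get from (ii)
\[
|N([i,j])-(C\cup\{e\})|=|N([i,j])|-|C|-1=|[i,j]|-2<|[i,j]|,
\]
so $C\cup\{e\}$ is dependent and $e\in\cl(C)$; thus $\sigma(x_i,y_j)\subseteq\cl(C)$.

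The main obstacle is the middle paragraph, and within it the step that upgrades ``$J$ is a minimum-size witness'' to ``$J$ is a cyclic interval $[i,j]$ with $N([i,j])$ equal to $\sigma(x_i,y_j)$''. This is where the cyclic-ordering and anti-chain conditions in the definition of a multi-path matroid are genuinely used: one must track how the intervals $N(k)$ sit on the cycle as $k$ runs through $[m]$, exploiting the monotonicity of the endpoints $x_k$ and $y_k$, the fact that a connected family of intervals covers a single interval whose endpoints are among the $x_k$ and $y_k$, and that consecutive members of such a family overlap. Everything else reduces directly to \Cref{circuits1}, \Cref{psi_rank}, and the inequality $g\ge-1$.
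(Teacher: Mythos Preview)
Your proof is correct and takes a genuinely different route from the paper's. The key organisational difference is that you fix a \emph{minimum-cardinality} witness $J$ at the outset and work with the defect function $g(A)=|N(A)-C|-|A|$, whereas the paper starts from an arbitrary witness, partitions it into pieces with disjoint interval neighbourhoods, passes to one piece, enlarges that piece to the full interval $[i,j]$, and then \emph{iteratively adjusts} the endpoints: if (iii) fails for the current pair $i',j$, the paper either replaces $i'$ by $i'+1$ or derives a contradiction via a two-element case analysis on $N([i',j])-N([i'+1,j])$, and symmetrically for (iv). Your minimality device collapses this whole adjustment: since $J=[i,j]$ and $J'=[i+1,j]$ is strictly smaller, $J'$ is not a witness, and the single inequality $g(J')\ge 0$ immediately yields $N([i,j])-N([i+1,j])\subseteq C$. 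You also obtain the pleasant extra fact $J=[i,j]$ exactly (via ``no index outside a witness has neighbourhood inside it''), which the paper neither states nor needs. Conversely, the paper's approach is more constructive in that it shows how to massage \emph{any} witness into one satisfying (i)--(iv), without appealing to an extremal choice. Both proofs leave the purely combinatorial step---that a connected family of the intervals $\sigma(x_k,y_k)$ covers a single interval $\sigma(x_i,y_j)$ and that $\{k:N(k)\subseteq\sigma(x_i,y_j)\}=[i,j]$---at the same level of detail, so your acknowledged ``main obstacle'' is no more of a gap than what the paper itself assumes about the multi-path structure.
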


\begin{proof}
Since $C$ is dependent, there exists $J \subseteq [m]$ such that $|N(J)-C| < |J|$. If $N(J)=E$, then $N([m])=E$, so $|N([m])-C| = |E-C| < |J| \leq m$. Therefore, by \Cref{circuits1}, $C$ has $|E|-m+1$ elements. So suppose that $N(J) \neq E$.

We next show that we may assume that $J$ has the property that $\left.N(J)=\sigma(x_i,y_j)\right.$ for some $i,j \in [m]$. If $J$ does not satisfy this property, then partition $J$ into maximal subsets with disjoint, consecutive neighbourhoods. \bloo{More formally, since 
\[N(J) = \bigcup\limits_{i_0 \in J}\sigma(x_{i_0},y_{i_0}),\] 
we may partition $J$ into sets $J_1, J_2, \ldots, J_k$ such that, for all $\ell \in [k]$, there \bloo{exist} $i_\ell, j_\ell \in [m]$ with $N(J_\ell) = \sigma(x_{i_\ell}, y_{j_\ell})$. Furthermore, we may choose such a partition in which, for all distinct $\ell,\ell' \in [k]$, the sets $\sigma(x_{i_{\ell}},y_{j_\ell})$ and $\sigma(x_{i_\ell'},y_{j_\ell'})$ are disjoint.} Now,
\begin{align*}
|N(J_1)-C|+|N(J_2)-C|+\cdots+|N(J_k)-C| & = |N(J)-C| \\
&< |J| \\
&= |J_1| + |J_2| + \cdots + |J_k|.
\end{align*}
It follows that there exists $\ell \in [k]$ such that $|N(J_\ell)-C| < |J_\ell|$, in which case replace $J$ with $J_{\ell}$.

We have chosen $J \subseteq [m]$ such that $|N(J)-C| < |J|$ and $N(J) = \sigma(x_i,y_j)$ for some $i,j \in [m]$. It follows from the definition of the bipartite graph $G$ that $J \subseteq [i,j]$. Furthermore, $N([i,j]) \subseteq \sigma(x_i,y_j)$, so $N([i,j]) = \sigma(x_i,y_j)$, that is (i) holds. Therefore,
\[|N([i,j])-C|=|N(J)-C| < |J| \leq \left|[i,j]\right|.\]
Hence, by \Cref{circuits1}, $C$ is a subset of $N([i,j])$ containing $\left.|N([i, j])|-|[i, j]|+1\right.$ elements, so (ii) holds.

\bloo{We next show that we may choose $i' \in [m]$ such that the pair $i',j$ satisfies (i), (ii), and (iii). Initially, choose $i'=i$, and suppose $i'$ and $j$ do not satisfy (iii). Then $i' \neq j$, and there exists $f \in N([i', j]) - N([i'+1, j])$ with $f \notin C$. First, assume $\left.N([i',j])-N([i'+1,j]) = \{f\}\right.$. Then $C$ is a subset of $N([i'+1,j])$ and
\begin{align*}
|C| &= |N([i',j])| - |[i',j]| + 1 \\
&= (|N([i'+1,j])|+1) - (|[i'+1,j]|+1) + 1 \\ 
&= |N([i'+1,j])| - |[i'+1,j]| + 1,
\end{align*}
so $i'+1,j$ satisfies (ii). Furthermore, it follows from the definition of the bipartite graph $G$ that, since $N([i',j])=\sigma(x_{i'},y_j)$, we have that $\left.N([i'+1,j])=\sigma(x_{i'+1},y_j)\right.$. Thus, $i'+1,j$ satisfies (i) and (ii), and we may replace $i'$ in the pair $i',j$ with $i'+1$.}

Hence, we may assume there exists $f' \in N([i',j])-N([i'+1,j])$ with $f' \neq f$. First assume $f' \in C$. Then, by~(ii),
\begin{align*}
|N([i'+1, j])-(C-\{f'\})| & = |N([i'+1, j])-C| \\
& < |N([i', j])-C| \\
& = \left|[i', j]\right|-1 \\
& = \left|[i'+1, j]\right|.
\end{align*}
Therefore, $C-\{f'\}$ is dependent, a contradiction. Now assume $f'\notin C$. Since $f, f'\not\in C$,
\[|N([i'+1, j])-C| < |N([i', j])-C|-1.\]
Let $x\in C$. Then, by~(ii),
\begin{align*}
|N([i'+1, j])-(C-\{x\})| & \leq |N([i'+1, j])-C|+1 \\
& < |N([i', j])-C| = \left|[i', j]\right|-1 = \left|[i'+1, j]\right|.
\end{align*}
But this implies that $C-\{x\}$ is dependent, and thus the pair $i',j$ satisfies (i), (ii) and (iii). \bloo{A symmetrical argument shows that we may choose $j' \in [m]$ such that the pair $i',j'$ satisfies (i)-(iv).}

\bloo{It remains to show (v).} Let $e \in C$, and let $e' \in \sigma(x_{i'},y_{j'})-C$. Then
\[
|N([i',j'])-((C-\{e\})\cup\{e'\})| = |N([i',j'])-C| < |[i',j']|.
\]
Therefore, $(C-\{e\})\cup\{e'\}$ is dependent, so contains a circuit $C'$. The circuit $C'$ contains the element $e'$, as otherwise $C'$ is a proper subset of $C$. Therefore, $e' \in \cl(C)$, completing the proof of the lemma.
\end{proof}

\begin{theorem}
\label{weak_map}
\blue{Let $M$ be a matroid on ground set $E$ such that, for all $i \in [m]$ and $1 \leq k \leq m$, we have
\begin{align*}
r_M\big(\sigma(x_i, y_i)\cup & \sigma(x_{i+1}, y_{i+1})\cup \cdots\cup \sigma(x_{i+k-1}, y_{i+k-1})\big) \\
& \leq r_{M^*[\mathscr I]}\big(\sigma(x_i, y_i)\cup (x_{i+1}, y_{i+1})\cup \cdots\cup \sigma(x_{i+k-1}, y_{i+k-1})\big).
\end{align*}
If $M[\mathscr I]$ has no \bloo{loops}}, then, under the identity map, $M$ is a weak-map image of $M^*[\mathscr I]$.
\end{theorem}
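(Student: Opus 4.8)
The plan is to use the equivalent description of a weak map from the introduction: it suffices to show that every circuit $C$ of $M^*[\mathscr I]$ contains a circuit of $M$, that is, that $C$ is dependent in $M$. I would obtain this by feeding the structural description of circuits of $M^*[\mathscr I]$ supplied by \Cref{circuits} into the rank hypothesis, splitting into the two cases that lemma provides.

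First I would record a consequence of the no-loops assumption. Since $M[\mathscr I]$ has no loops, every element of $E$ is a neighbour of some $i\in[m]$, so $E = N([m]) = \sigma(x_1,y_1)\cup\sigma(x_2,y_2)\cup\cdots\cup\sigma(x_m,y_m)$. Applying the rank hypothesis with $k=m$, together with \Cref{psi_rank}, then gives $r_M(E)\le r_{M^*[\mathscr I]}(E) = |E|-m$, so $r(M)\le|E|-m$. Consequently, if the first alternative of \Cref{circuits} holds, namely $|C| = |E|-m+1$, then $|C| = r(M^*[\mathscr I])+1 > r(M)$, and $C$ is already dependent in $M$.

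In the remaining case, $C$ satisfies conditions (i)--(v) of \Cref{circuits} for some $i,j\in[m]$. Setting $k = |[i,j]|$, we have $1\le k\le m$, the union $\sigma(x_i,y_i)\cup\sigma(x_{i+1},y_{i+1})\cup\cdots\cup\sigma(x_{i+k-1},y_{i+k-1})$ equals $N([i,j])$, and by (i) this set equals $\sigma(x_i,y_j)$. Hence the rank hypothesis (for this $i$ and $k$) gives $r_M(\sigma(x_i,y_j))\le r_{M^*[\mathscr I]}(\sigma(x_i,y_j))$. On the other hand, by (ii) we have $C\subseteq\sigma(x_i,y_j)$ and by (v) we have $\sigma(x_i,y_j)\subseteq\cl(C)$ in $M^*[\mathscr I]$; since $C$ is a circuit of $M^*[\mathscr I]$, these two inclusions force $r_{M^*[\mathscr I]}(\sigma(x_i,y_j)) = r_{M^*[\mathscr I]}(C) = |C|-1$. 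Putting these together, $r_M(\sigma(x_i,y_j))\le|C|-1<|C|$, so the subset $C$ of $\sigma(x_i,y_j)$ must be dependent in $M$. In both cases $C$ contains a circuit of $M$, which is what was needed.

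I do not anticipate a serious obstacle: the real work is already done in \Cref{circuits}, which tells us that a circuit of $M^*[\mathscr I]$ is either spanning or is a ``full-rank'' subset of an interval neighbourhood $\sigma(x_i,y_j)$ whose closure it spans. The only points calling for care are (a) using the no-loops hypothesis to identify $N([m])$ with $E$, so that the $k=m$ instance of the rank condition bounds $r(M)$ and disposes of the spanning circuits, and (b) checking that the union appearing in the rank hypothesis, when $k=|[i,j]|$, is exactly $N([i,j])=\sigma(x_i,y_j)$, so that part (i) of \Cref{circuits} lets the hypothesis be applied to precisely the set we want. Notably, orthogonality plays no role here; the argument reduces to a single comparison of ranks.
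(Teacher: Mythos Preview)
Your proposal is correct and follows essentially the same approach as the paper: reduce to showing every circuit of $M^*[\mathscr I]$ is dependent in $M$, use the no-loops hypothesis with $k=m$ and \Cref{psi_rank} to handle the spanning circuits, and for the remaining case invoke parts (i), (ii), and (v) of \Cref{circuits} to compare ranks on $\sigma(x_i,y_j)$. The paper's proof is virtually identical in structure and in the details.
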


\begin{proof}
Let $\varphi$ denote the identity map from the ground set $E$ of $M^*[\mathscr I]$ to the ground set $E$ of $M$. To prove the theorem, we will show that if $C$ is a circuit of $M^*[\mathscr I]$, then $\varphi(C)$ contains a circuit of $M$. Let $C$ be a circuit of $M^*[\mathscr I]$. Now, as $M[\mathscr I]$ has no \bloo{loops, every element of $E$ is in $N(i) = \sigma(x_i,y_i)$ for some $i \in [m]$. Therefore, $\sigma(x_1,y_1) \cup \sigma(x_2,y_2) \cup \cdots \cup \sigma(x_m,y_m) = E$. Thus, by \Cref{psi_rank}}
\begin{align*}
|E|-m & = r(M^*[\mathscr I]) \\
& = r_{M^*[\mathscr I]}\big(\sigma(x_1, y_1)\cup \sigma(x_2, y_2)\cup \cdots\cup \sigma(x_m, y_m)\big) \\
& \ge r_M\big(\sigma(x_1, y_1)\cup \sigma(x_2, y_2)\cup \cdots\cup \sigma(x_m, y_m)\big) \\
& = r(M).
\end{align*}
\blue{Therefore, if $C$ contains $|E| - m + 1$ elements, then $\varphi(C)$ contains a circuit of $M$.}


Otherwise, by \Cref{circuits}, there \bloo{exist} $i,j \in [m]$ such that $C$ is a subset of $\sigma(x_i,y_j)$ containing $|\sigma(x_i,y_j)|-|[i,j]|+1$ elements. \bloo{Furthermore, by \Cref{circuits}(i), we have that
\[
N([i,j]) = \sigma(x_i,y_i) \cup \sigma(x_{i+1},y_{i+1}) \cup \cdots \cup \sigma(x_j,y_j) = \sigma(x_i,y_j)
\]
and so $r_M(\sigma(x_i,y_j)) \leq r_{M^*[\mathscr I]}(\sigma(x_i,y_j))$. By \Cref{circuits}(v), we have that $\sigma(x_i,y_j) \subseteq \cl(C)$, so $r_{M^*[\mathscr I]}(\sigma(x_i,y_j)) = r_{M^*[\mathscr I]}(C) = |C|-1$.} \bloo{Thus, 
\[r_M(C) \leq r_M(\sigma(x_i,y_j)) \leq r_{M^*[\mathscr I]}(\sigma(x_i,y_j))=|C|-1.\]
Therefore, $\varphi(C)$ contains a circuit of $M$.}
\end{proof}


\blue{The previous results in this section apply for any multi-path matroid $M^*[\mathscr I]$. We now turn our attention to the case where $M^*[\mathscr I]\cong \Psi^n_s$, towards proving \Cref{free}}. \bloo{We first show that $\Psi^n_s$ is self-dual.}

\begin{lemma} \label{selfdual}
	\bloo{Let $s$ be an integer exceeding two, and let $\phi_s:E \rightarrow E$ be the identity map if $s$ is even, or the map $\phi_s(e_i) = e_{i+1}$ if $s$ is odd. Then $\Psi^n_s$ is self-dual under the map $\phi_s$.}
\end{lemma}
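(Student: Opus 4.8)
The plan is to show directly that $\phi_s$ is a bijection carrying $\Psi^n_s$ to its dual, equivalently (since $\Psi^n_s = M^*[\mathscr I]$) that $\phi_s$ carries $M[\mathscr I]$ to $(\Psi^n_s)^* = M[\mathscr I]$ — wait, more carefully: I want $\phi_s$ to be an isomorphism from $\Psi^n_s$ to $(\Psi^n_s)^* = M[\mathscr I]$. It is cleaner to work with the transversal matroid $M[\mathscr I]$, whose presentation is $(N(1),\dots,N(\frac n2))$ with $N(i)=\sigma(2i-1,2i+s-2)=\{e_{2i-1},e_{2i},\dots,e_{2i+s-2}\}$, subscripts modulo $n$. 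So the goal becomes: $\phi_s$ is an isomorphism from $M[\mathscr I]$ to $\Psi^n_s = M^*[\mathscr I]$; equivalently, for every $X\subseteq E$, $X$ is independent in $M[\mathscr I]$ if and only if $\phi_s(X)$ is coindependent in $M[\mathscr I]$, i.e. if and only if $E-\phi_s(X)$ is spanning in $M[\mathscr I]$, i.e. (by Hall) there is a complete matching of $[\frac n2]$ into $E-\phi_s(X)$.

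The key structural observation is a complementation/reflection symmetry of the presentation. Note that $N(i) = \{e_{2i-1},\dots,e_{2i+s-2}\}$ is a "window" of $s$ consecutive elements starting at an odd index $2i-1$. Its complement $E - N(i)$ is the window of $n-s$ consecutive elements $\{e_{2i+s-1},\dots,e_{2i-2}\}$, starting at index $2i+s-1$. Since $n$ is even and (for $\Psi^n_s$ being $(s,s)$-cyclic with $n$ large, by \Cref{even_n}) $s$ is... actually we only know $s\ge 3$ here, but $n$ is a positive even integer by hypothesis on $G^n_s$. The first step is therefore to establish the matching combinatorics: I would prove that a set $Y\subseteq E$ is spanning in $M[\mathscr I]$ (i.e. admits a complete matching of $[\frac n2]$ into $Y$) precisely when $Y$ meets every "dual window" $N^c(i):= E - N(i-s+1)$ appropriately — more usefully, I'd use the deficiency version of Hall's theorem together with the interval structure: for any $J\subseteq[\frac n2]$, $N(J)$ is a union of cyclic intervals, and the worst-case $J$'s are intervals $[i,j]$, for which $N([i,j]) = \sigma(2i-1, 2j+s-2)$, a window of $2(j-i)+s$ consecutive elements (reading cyclically, provided it is not all of $E$). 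So $X$ is independent in $M[\mathscr I]$ iff for every cyclic interval $[i,j]$ with $N([i,j])\ne E$ we have $|N([i,j]) - X|\ge |[i,j]| = j-i+1$, i.e. $|X \cap \sigma(2i-1,2j+s-2)| \le 2(j-i)+s - (j-i+1) = (j-i)+s-1$.

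The heart of the argument is then a bijection/counting identity. I would show: $X$ is independent in $M[\mathscr I]$ $\iff$ $\phi_s(X)$ is independent in $M^*[\mathscr I] = \Psi^n_s$, i.e. $E - \phi_s(X)$ is spanning in $M[\mathscr I]$, i.e. for every cyclic interval $[i,j]$ (with neighborhood $\ne E$), $|N([i,j]) \cap \phi_s(X)| \le (j-i)+s-1$. The point is that the map $\phi_s$ (identity if $s$ even, shift-by-one if $s$ odd) exactly converts a window starting at an odd index into a window starting at an odd index in a complementary way: one checks that the family of "bad windows" $\{\,\sigma(2i-1,2j+s-2)\,\}$ (for $M[\mathscr I]$-independence) is carried by $\phi_s^{-1}$ onto itself, and that the independence inequality for one interval is the complement of the independence inequality for the complementary interval. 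Concretely: $\sigma(2i-1, 2j+s-2)$ has $2(j-i)+s$ elements; its complement is $\sigma(2j+s-1, 2i-2)$, which has $n - 2(j-i) - s$ elements, and $2j+s-1$ has the parity of $s-1$, so after applying the shift $\phi_s$ when $s$ is odd this again becomes a window starting at an odd index, hence equals $N([i',j'])$ for suitable $i',j'$; the inequality $|X\cap W|\le |W| - |[i,j]|$ transforms into $|(E\setminus\phi_s X)\cap W^c|\ge |[i',j']|$ after the arithmetic $n = 2\cdot\frac n2$. I would carry this out by: (a) recording the exact correspondence $[i,j]\mapsto[i',j']$ between cyclic intervals given by complementation of windows plus the $\phi_s$-shift; (b) checking it is an involution on the set of intervals with proper neighbourhood; and (c) verifying the two inequalities match term-by-term using $|W| + |W^c| = n$ and $|[i,j]| + |[i',j']| = \frac n2$.

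The main obstacle I anticipate is bookkeeping the parities and the modular arithmetic of the subscripts cleanly — in particular handling the cyclic wrap-around so that "window starting at an odd index" is preserved under complementation exactly when one applies $\phi_s$, and making sure the degenerate cases ($i=j$, or $N([i,j])$ nearly all of $E$, or $X$ of full rank) are covered. A secondary point to be careful about is that Hall's condition must be checked for all $J$, not just intervals; I would dispatch this by the standard reduction (as used in the proof of \Cref{circuits}) that a violating $J$ can be replaced by one of its "interval pieces". Once the window correspondence is pinned down, the equivalence of the two systems of inequalities is a direct substitution, and self-duality of $\Psi^n_s$ under $\phi_s$ follows. (When $s$ is even, $\phi_s$ is the identity, and the statement is simply that $\Psi^n_s$ equals its own dual on the nose; when $s$ is odd the single shift is forced by the parity mismatch, consistent with the whirl case $s=3$, where the rank-$\frac n2$ whirl is self-dual under a one-step rotation of its natural cyclic ordering.)
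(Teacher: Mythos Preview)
Your core strategy---characterize the relevant sets by the family of ``window inequalities'' $|X\cap\sigma(2i-1,2j+s-2)|\le (j-i)+s-1$, then show that complementing and shifting by $\phi_s$ permutes this family and carries each inequality to the corresponding one for the complementary window---is exactly the paper's approach. The paper phrases it as: a set $X$ of size $\tfrac{n}{2}$ is a basis of $\Psi^n_s$ if and only if $|X\cap\sigma(i,i+s-1+2k)|<s+k$ for all odd $i$ and $0\le k\le\tfrac{n}{2}-s$, and then checks directly that $\phi_s^{-1}(E-B)$ satisfies the same inequalities whenever $B$ does.

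Two slips in your write-up are worth flagging. First, the Hall condition ``for every interval $J=[i,j]$, $|N(J)-X|\ge|J|$'' characterizes independence of $X$ in $\Psi^n_s=M^*[\mathscr I]$ (it says $E-X$ is spanning in $M[\mathscr I]$), not independence of $X$ in $M[\mathscr I]$ as you wrote; independence in the transversal matroid $M[\mathscr I]$ is governed by Hall applied to subsets of $X$, which is a different system of inequalities. Second, your transformation ``$|X\cap W|\le|W|-|[i,j]|$ becomes $|(E\setminus\phi_s X)\cap W^c|\ge|[i',j']|$'' does not follow from $|W|+|W^c|=n$ and $|[i,j]|+|[i',j']|=\tfrac{n}{2}$ alone: the left-hand inequality says nothing about $X$ outside $W$, so you cannot deduce anything about $X\cap W^c$ without also using $|X|=\tfrac{n}{2}$. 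In other words, the complementation arithmetic only goes through for bases, which is precisely how the paper sets it up. Once you restrict to bases and correct the labeling, your argument and the paper's coincide.
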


\begin{proof}
\bloo{By \Cref{circuits}, a circuit of $\Psi^n_s$ is either a set of $\frac{n}{2}+1$ elements, or a subset of $\sigma(x_i,y_{i+k})=\sigma(2i-1,2i+2k+s-2)$ containing $\left.|\sigma(2i-1,2i+2k+s-2)|-(k+1)+1 = s+k\right.$ elements, for some $i \in [m]$ and $k \leq \frac{n}{2}-s$. It follows from \Cref{psi_rank} that a set $X$ is a basis of $\Psi^n_s$ if and only if $X$ is a set of $\frac{n}{2}$ elements such that, for all odd $i \in [n]$ and $k \leq \frac{n}{2}-s$, we have that
\begin{align*}
\left|X \cap \sigma(i,i+s-1+2k)\right| < s+k.
\end{align*}
Let $B$ be a basis of $\Psi^n_s$. Let $i \in [n]$ be odd, and let $k \leq \frac{n}{2}-s$. To complete the proof, it is sufficient to show that $\left|\phi_s^{-1}(E-B) \cap \sigma(i,i+s-1+2k)\right| < s+k$.}
	
\bloo{First, suppose $s$ is even. Then 
\begin{align*}
\phi_s(E-\sigma(i,i+s-1&+2k)) = \sigma(i+s+2k,i-1) \\
&= \sigma\left(i+s+2k,i+s+2k+s-1+2\left(\tfrac{n}{2}-k-s\right)\right).
\end{align*}
Hence, noting that $i+s+2k$ is odd, we have that 
\[\left|B \cap \phi_s\left(E-\sigma(i,i+s-1+2k)\right)\right| < \tfrac{n}{2}-k.\]
On the other hand, if $s$ is odd, then
\begin{align*}
\phi_s(E-\sigma(i,i&+s-1+2k)) = \phi_s(\sigma(i+s+2k,i-1)) \\
&= \sigma(i+s+2k+1,i)\\ 
&= \sigma\left(i+s+2k+1, i+s+2k+1 + s-1 + 2\left(\tfrac{n}{2}-k-s\right)\right).
\end{align*}
Since $i+s+2k+1$ is odd, we have that 
\[
\left|B \cap \phi_s\left(E-\sigma(i,i+s-1+2k)\right)\right| < \tfrac{n}{2}-k.
\]}
\bloo{In both cases, 
\[
\left|\phi_s^{-1}(B) \cap \left(E-\sigma(i,i+s-1+2k)\right)\right| < \tfrac{n}{2}-k
\]
and so 
\[
\left|\phi_s^{-1}(B) \cap \sigma(i,i+s-1+2k)\right| > k.
\]
Therefore, 
\begin{align*}
\left|\phi_s^{-1}(E-B) \cap \sigma(i,i+s-1+2k)\right| &< |\sigma(i,i+s-1+2k)| - k \\
&= s+2k-k = s+k
\end{align*}
as required.}
\end{proof}

\begin{lemma}
Let $s$ and $t$ be positive integers exceeding one, such that $t \ge s$. If $n$ is a positive even integer \blue{with $n \geq s+t-2$} and $s\equiv t\bmod{2}$, then $T^{\frac{t-s}{2}}(\Psi^n_s)$ is an $(s, t)$-cyclic matroid.
\label{truncation2}
\end{lemma}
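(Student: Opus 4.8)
The plan is to verify directly that $T^{\frac{t-s}{2}}(\Psi^n_s)$ admits an $(s,t)$-cyclic ordering, using the cyclic ordering $\sigma=(e_1,e_2,\ldots,e_n)$ of $E$ inherited from $\Psi^n_s$. Write $M = T^{\frac{t-s}{2}}(\Psi^n_s)$ and $p=\frac{t-s}{2}$, which is a non-negative integer since $s\equiv t\bmod 2$ and $t\ge s$. First I would record the rank: by \Cref{mat_rank} (applied with $s=t$ to the $(s,s)$-cyclic matroid $\Psi^n_s$) we have $r(\Psi^n_s)=\frac{n}{2}=r(M^*[\mathscr I])$ as confirmed by \Cref{psi_rank}, and then \Cref{truncation1} gives $r(M)=\frac{n}{2}-p=\frac{n+s-t}{2}$, which must be at least $1$; the hypothesis $n\ge s+t-2$ guarantees this (with $p\le r(\Psi^n_s)$ following from the same inequality). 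The independent sets of $M$ are, by \Cref{truncation1}, exactly the independent sets of $\Psi^n_s$ of size at most $\frac{n}{2}-p$.

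Next I would identify the $s$-element circuits of $M$. Since $\Psi^n_s$ is $(s,s)$-cyclic with ordering $\sigma$, by the structure results (\Cref{structure}, \Cref{set_rank}) the sets $\sigma(i,i+s-1)$ for $i$ in one parity class are circuits of $\Psi^n_s$, while $\sigma(i,i+s-2)$ is always independent in $\Psi^n_s$; moreover any $s$-element subset of $E$ consisting of $s-1$ consecutive elements of $\sigma$ plus one more element has rank $s-1$ in $\Psi^n_s$ only when it is one of these consecutive $s$-sets. Since $s-1\le \frac{n}{2}$ (again from $n\ge s+t-2\ge 2s-2$), the set $\sigma(i,i+s-2)$ remains independent in $M$, so every $s$-element circuit of $M$ that sits inside $s-1$ consecutive elements must be an $s$-element circuit of $\Psi^n_s$ — hence one of the $\sigma(i,i+s-1)$ with $i$ in the correct parity class — together with the observation that truncation can only create new small circuits, not destroy old ones, and here the only dependencies among $s$ elements already present are these. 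This shows conditions (i) and (iii) of the definition of $(s,t)$-cyclic, with the same $s$-element circuits as in $\Psi^n_s$.

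The main work is the cocircuit side, conditions (ii) and (iv). Here I would use duality: $M^* = (T^p(\Psi^n_s))^*$ is the $p$-th \emph{Higgs lift} of $(\Psi^n_s)^*$, and since $\Psi^n_s$ is self-dual (\Cref{selfdual}, via the isomorphism $\phi_s$), $(\Psi^n_s)^*\cong\Psi^n_s$. So the $t$-element cocircuits of $M$ correspond to $t$-element circuits of a $p$-fold Higgs lift of $\Psi^n_s$. The key computation is to show that in this lift the sets $\sigma(i,i+t-1)$ — for $i$ in the appropriate parity class, shifted by $\phi_s$ — are precisely the $t$-element circuits sitting within $t-1$ consecutive elements; this amounts to checking, via \Cref{set_rank} and \Cref{circuits}, that $r_{\Psi^n_s}(\sigma(i,i+t-2)) = t-1-p$ exactly when $\sigma(i,i+s-1)$ is a circuit (so the $(t-1)$-set becomes a circuit after lifting $p$ times, i.e.\ spanning-deficient by exactly $p$), and that adding the next element $e_{i+t-1}$ keeps the rank at $t-1-p$ in $\Psi^n_s$ (so $\sigma(i,i+t-1)$ is a circuit of the lift), whereas adding any other single element increases the rank (so no other $t$-set through these $t-1$ elements is dependent). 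The rank bookkeeping for $\sigma$-intervals in $\Psi^n_s$ from \Cref{set_rank} makes all of these floor/ceiling computations routine once $s\equiv t\bmod 2$ is used to control parities. Having established that the consecutive $s$-sets and $t$-sets of the prescribed parities are circuits and cocircuits of $M$ and that they propagate by the shift-by-two rule (inherited from the $(s,s)$-cyclic structure of $\Psi^n_s$), conditions (i)--(iv) all hold, so $\sigma$ is an $(s,t)$-cyclic ordering of $M$.

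The step I expect to be the genuine obstacle is verifying that the lifted intervals $\sigma(i,i+t-1)$ are circuits \emph{and} that nothing else of size $t$ within $t-1$ consecutive elements is dependent in $M$: this requires careful use of \Cref{set_rank} together with the self-duality map $\phi_s$ to align the parities of the circuit-indices and cocircuit-indices correctly, and a clean argument that Higgs lifting introduces no spurious small circuits beyond those forced by the rank drop. The circuit side and the rank computation are comparatively mechanical.
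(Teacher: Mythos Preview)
Your approach is workable but takes a different and more laborious route than the paper. The paper proceeds by induction on $t$: the base case $t=s$ is the self-duality of $\Psi^n_s$ (\Cref{selfdual}), and for the inductive step one observes that a single further truncation preserves every non-spanning circuit, so the consecutive $s$-element circuits survive since the rank of $T^{\frac{(t-2)-s}{2}}(\Psi^n_s)$ is at least $s$; and if $\{e_j,\ldots,e_{j+t-3}\}$ and $\{e_{j+2},\ldots,e_{j+t-1}\}$ are both $(t-2)$-element cocircuits of the $(s,t-2)$-cyclic matroid $N'$, then $(E(N')-\{e_j,\ldots,e_{j+t-1}\})\cup\{f\}$ is a hyperplane of the free extension $N'+f$, so $\{e_j,\ldots,e_{j+t-1}\}$ is a cocircuit of $T(N')$. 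No Higgs-lift rank formulas and no uniqueness arguments are needed.

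Your direct route via $M^* = L^p((\Psi^n_s)^*)$ can be made to work: a set $X$ is a circuit of $L^p(N)$ precisely when its nullity in $N$ equals $p+1$ and $X$ is a union of circuits of $N$, and both conditions are easy to verify for a length-$t$ interval of $\Psi^n_s$ beginning at an odd index (the second because such an interval is covered by the $\frac{t-s}{2}+1$ consecutive $s$-element circuits it contains). But the obstacle you highlight is not real. You do \emph{not} need to rule out spurious $t$-element cocircuits: conditions (i)--(iv) only demand that the prescribed circuits and cocircuits exist and propagate under the shift by two, and once the $s$-element circuits $\sigma(i,i+s-1)$ exist for all odd $i$, orthogonality already forbids any wrong-parity consecutive $t$-set from being a cocircuit whenever $n>s+t-2$, so (iv) comes for free. (A small slip in your sketch: it is the $t$-element interval, not the $(t-1)$-element one, whose nullity in $\Psi^n_s$ must equal $p+1$.)
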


\begin{proof}
Without loss of generality, we may assume that the ground set $\{e_1, e_2, \ldots, e_n\}$ of $\Psi^n_s$ is consistent with the bipartite graph $G^n_s$ associated with the dual of $\Psi^n_s$ as described in the introduction. In particular, $G^n_s$ has vertex parts $\{e_1, e_2, \ldots, e_n\}$ and $[\frac{n}{2}]$ and, for all $i\in \{1, 2, \ldots, \frac{n}{2}\}$, we have
$$\bloo{N(i)}=\{e_{2i-1}, e_{2i}, \ldots, e_{2i+s-2}\}.$$

The proof is by induction on $t$. Suppose that $t=s$, and consider $\left.T^0(\Psi^n_s)=\Psi^n_s\right.$. It is easily checked that, for all odd $i\in [n]$, the set $\{e_i, e_{i+1}, \ldots, e_{i+s-1}\}$ is an $s$-element circuit of $\Psi^n_s$. \bloo{By \Cref{selfdual}, the set $\{e_j,e_{j+1},\ldots,e_{j+s-1}\}$ is an $s$-element cocircuit of $\Psi^n_s$ for all odd $j \in [n]$ if $s$ is even, or for all even $j \in [n]$ if $s$ is odd. Therefore $\Psi^n_s$ is $(s,s)$-cyclic, and the lemma holds if $t = s$.}

Now suppose that $t > s$ and that the matroid $T^{\frac{(t-2)-s}{2}}(\Psi^n_s)$ is $(s, t-2)$-cyclic. Consider
$$T^{\frac{t-s}{2}}(\Psi^n_s)=T\left(T^{\frac{(t-2)-s}{2}}(\Psi^n_s)\right).$$
\blue{It follows from \Cref{truncation1} that each non-spanning circuit of $T^{\frac{(t-2)-s}{2}}(\Psi^n_s)$ is a circuit of $T^{\frac{t-s}{2}}(\Psi^n_s)$. Now, by \Cref{mat_rank},
\begin{align*}
r\left(T^{\frac{(t-2)-s}{2}}(\Psi^n_s)\right) & = {\textstyle \frac{n+s-(t-2)}{2}} \\
& \ge {\textstyle \frac{(s+t-2)+s-t+2}{2}} \\
& = s.
\end{align*}
Therefore, for all odd \bloo{$i\in[n]$}, we have that $\{e_i, e_{i+1}, \ldots, e_{i+s-1}\}$ is a non-spanning circuit of $T^{\frac{(t-2)-s}{2}}(\Psi^n_s)$, so is also an $s$-element circuit of $T^{\frac{t-s}{2}}(\Psi^n_s)$}. Furthermore, for all $j\in [n]$, if $\{e_j, e_{j+1}, \ldots, e_{j+t-3}\}$ and $\{e_{j+2}, e_{j+3}, \ldots, e_{j+t-1}\}$ are $(t-2)$-element cocircuits of $T^{\frac{(t-2)-s}{2}}(\Psi^n_s)$, then $\{e_j, e_{j+1}, \ldots, e_{j+t-1}\}$ is a $t$-element cocircuit of $T^{\frac{t-s}{2}}(\Psi^n_s)$. To see this, if $f$ is the element freely added to $T^{\frac{(t-2)-s}{2}}(\Psi^n_s)$, then it is easily checked that
$$\left(E\left(T^{\frac{(t-2)-s}{2}}(\Psi^n_s)\right)-\{e_j, e_{j+1}, \ldots, e_{j+t-1}\}\right)\cup \{f\}$$
is a hyperplane of the resulting matroid. Therefore
$$E\left(T^{\frac{t-s}{2}}(\Psi^n_s)\right)-\{e_j, e_{j+1}, \ldots, e_{j+t-1}\}$$
is a hyperplane of $T^{\frac{t-s}{2}}(\Psi^n_s)$, so $\{e_j, e_{j+1}, \ldots, e_{j+t-1}\}$ is a $t$-element cocircuit of $T^{\frac{t-s}{2}}(\Psi^n_s)$. Hence, by induction, $T^{\frac{t-s}{2}}(\Psi^n_s)$ is $(s, t)$-cyclic.
\end{proof}

\begin{proof}[Proof of Theorem~\ref{free}]
  Let $M$ be an $(s, t)$-cyclic matroid on $n$ elements, where \blue{$n \ge s+t-1$ and $t\ge s$.  Then, by \Cref{even_n},} $n$ is even, and $s\equiv t\bmod{2}$.
Let $\sigma=(e_1, e_2, \ldots, e_n)$ be an $(s, t)$-cyclic ordering of $E(M)$. Without loss of generality, we may assume that, for all odd \bloo{$i\in[n]$}, the set \bloo{$\sigma(i, i+s-1)$} is an $s$-element circuit of $M$. Now consider $\Psi^n_s$. To ease reading, we may assume that $E(M)=E(\Psi^n_s)$ and $\sigma=(e_1, e_2, \ldots, e_n)$ is an $(s, s)$-cyclic ordering of $\Psi^n_s$, where \bloo{$\sigma(i, i+s-1)$} is an $s$-element circuit of $\Psi^n_s$ for all odd \bloo{$i\in[n]$}. Note that the dual of $\Psi^n_s$ has no \bloo{loops}.

First suppose that $t=s$. \blue{By \Cref{truncation2}, both $M$ and $\Psi^n_s$ are $(s, s)$-cyclic matroids with $n$ elements. Therefore, by \Cref{set_rank}, for all $i \in [\tfrac{n}{2}]$ and $k$ such that $1 \leq k \leq m$, we have that
\begin{align*}
r_M\big(\sigma(x_i, y_i)\cup & \sigma(x_{i+1}, y_{i+1})\cup \cdots\cup \sigma(x_{i+k-1}, y_{i+k-1})\big) \\
& = r_{M^*[\mathscr I]}\big(\sigma(x_i, y_i)\cup \sigma(x_{i+1}, y_{i+1})\cup \cdots\cup \sigma(x_{i+k-1}, y_{i+k-1})\big),
\end{align*}
where $x_i=e_{2i-1}$ and $y_i=e_{2i+s-2}$ for all $i\in \{1, 2, \ldots, \frac{n}{2}\}$. Hence, by \Cref{weak_map}, under the identity map, $M$ is a weak-map image of $\Psi^n_s$.}

\blue{Now suppose $t > s$. By \Cref{truncation2}, the matroid $T^{\frac{t-s}{2}}(\Psi^n_s)$ is an $(s,t)$-cyclic matroid. It remains to} show that $M$ is a weak-map image of $T^{\frac{t-s}{2}}(\Psi^n_s)$. Let $I$ be an independent set in $M$. By \Cref{weak_map}, under the identity map, $M$ is a weak-map image of $\Psi^n_s$, and so $I$ is an independent set in $\Psi^n_s$. \bloo{From \Cref{mat_rank}, we have that}
\begin{align*}
r(M) & = {\textstyle \frac{n+s-t}{2}=\frac{n}{2}-\left(\frac{t-s}{2}\right)} = {\textstyle r(\Psi^n_s)-\left(\frac{t-s}{2}\right),}
\end{align*}
\bloo{Therefore,} $|I|\le r(\Psi^n_s)-\left(\frac{t-s}{2}\right)$. Therefore, as $T^{\frac{t-s}{2}}(\Psi^n_s)$ is the $\left(\frac{t-s}{2}\right)$-th truncation of $\Psi^n_s$, it follows by \Cref{truncation1} that $I$ is independent in $T^{\frac{t-s}{2}}(\Psi^n_s)$. In particular, under the identity map, $M$ is a weak-map image of $T^{\frac{t-s}{2}}(\Psi^n_s)$. This completes the proof of \Cref{free}.
\end{proof}

\section{Counterexample}
\label{counterexample}

In this section, we give a counterexample to a conjecture posed in \cite{bre19b}. Let $s$ be an integer exceeding two, and let $M$ be an $(s, s)$-cyclic matroid such that $|E(M)|\ge 2s+2$. A matroid $N$ is an {\em inflation} of $M$ if $N$ can be obtained from $M$ by first taking an elementary quotient in which none of the $s$-element cocircuits corresponding to consecutive elements in the cyclic ordering are preserved, \bloo{which produces an $(s,s+2)$-cyclic matroid}, and then taking an elementary lift in which none of the $s$-element circuits corresponding to consecutive elements in the cyclic ordering are preserved. The resulting matroid $N$ is $(s+2, s+2)$-cyclic. The conjecture in \cite[Conjecture~6.1]{bre19b} is the following:

\begin{conjecture}
Let $s$ be an integer exceeding two, and let $M$ be an $(s, s)$-cyclic matroid.
\begin{enumerate}[{\rm (i)}]
\item If $s$ is even, then $M$ can be obtained from a spike or a swirl by a sequence of inflations.

\item If $s$ is odd, then $M$ can be obtained from a wheel or a whirl by a sequence of inflations.
\end{enumerate}
\label{conjecture}
\end{conjecture}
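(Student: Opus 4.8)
Since this section presents a counterexample, the statement is false; the plan below is therefore a blueprint for locating and certifying such a counterexample rather than a proof. The goal is to exhibit one $(s,s)$-cyclic matroid that cannot be obtained from the prescribed base matroids by any sequence of inflations.

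The first step is to pin down what an inflation preserves. Each inflation is ground-set-preserving and, by \Cref{mat_rank}, rank-preserving, and it is the composition of an elementary quotient (the rank drops by one; every flat of the result is a flat of the original) with an elementary lift (the dual operation), subject to the extra rigidity that the intermediate $(s,s+2)$-cyclic matroid destroys all consecutive cocircuits and the resulting $(s+2,s+2)$-cyclic matroid destroys all consecutive circuits. Hence any matroid obtained from a base $M_0$ by $k$ inflations carries a ``short resolution'' of length $2k$ through $M_0$ on a fixed ground set. I would feed this into \Cref{free}: every $(s,s+2j)$-cyclic matroid appearing along such a resolution is a weak-map image of $T^{j}(\Psi^{n}_{s})$, so the ranks of all intervals $\sigma(i,j)$ are squeezed between the values for $M_0$ and the values for $\Psi^{n}_{s}$ given by \Cref{set_rank}.

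The cleanest candidate is a small even $s$. When $s=4$ (and, depending on whether every $(4,4)$-cyclic matroid is already a spike or a swirl, possibly $s=6$ instead), ``obtained from a spike or swirl by a sequence of inflations'' collapses to a membership question, and one only needs a single $(4,4)$-cyclic matroid on $n\ge 8$ elements --- for instance a suitably degenerate weak-map image of $\Psi^{n}_{4}$, or a swirl-like matroid in which only some, but not all, of the expected collinearities occur --- whose set of $4$-element circuits matches neither the spike pattern (all $L_i\cup L_j$) nor the swirl pattern (only cyclically consecutive $L_i\cup L_{i+1}$). That it is $(4,4)$-cyclic would follow from \Cref{truncation2} with $t=s$, or directly from \Cref{weak_map}; that it is neither a spike nor a swirl is a finite check on its $4$-element circuits.

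The hard part will be the case $s>4$, where one must rule out every sequence of inflations rather than a single instance, and where the intermediate $(s-2,s)$-cyclic matroid is not determined by the endpoints. My plan to handle this is to propagate the weak-map squeeze from \Cref{free} down the whole resolution: the base matroid forces certain interval-ranks to be maximal, an elementary quotient can only keep or decrease an interval-rank, an elementary lift can only keep or increase it, and the mandated destruction of consecutive circuits and cocircuits pins down which of these must strictly change; comparing the accumulated inequalities against the exact interval-ranks of the target $(s,s)$-cyclic matroid from \Cref{set_rank} should force a numerical contradiction. Making this bookkeeping uniform in $s$ and $n$, rather than verifying it by hand in one small example, is where I expect most of the difficulty to lie.
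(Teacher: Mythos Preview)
Your plan has the right instinct --- find a counterexample --- but it misses the two ideas that make the paper's argument work, and without them the outline does not close.

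First, the counterexample. The paper takes $\Psi^n_s$ itself, for any $s\ge 5$. You never commit to a candidate: your $s=4$ suggestion is at best a separate special case (and note $\Psi^n_4$ \emph{is} the free swirl, so you would need something genuinely different there), while your general-$s$ plan never names a matroid. The paper's choice is natural in hindsight: by \Cref{free}, $\Psi^n_s$ is the freest $(s,s)$-cyclic matroid, so if anything fails to arise from inflations, it should.

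Second, and more importantly, you propose to track interval ranks through the \emph{entire} chain of inflations and hope for a numerical contradiction. The paper bypasses this entirely: it suffices to block the \emph{last} step. If $\Psi^n_s$ arose via inflations, then in particular it would be an elementary lift of some $(s-2,s)$-cyclic matroid; by self-duality (\Cref{selfdual}), equivalently $\Psi^n_s$ is an elementary quotient of some rank-$(\tfrac{n}{2}+1)$ matroid $M'$ in which every odd-indexed run of $s$ consecutive elements is still a circuit. The paper then invokes the circuit-union characterisation of quotients (\Cref{quotient}): every circuit of $M'$ must be a union of circuits of $\Psi^n_s$. Starting from the consecutive $s$-circuits of $M'$ and repeatedly applying circuit elimination against the explicit circuits $C_{i,k,\ell}\cup\{x\}$ of $\Psi^n_s$ (\Cref{psicircuits}), one forces enough circuits in $M'$ (\Cref{quotient_circuits}) that $\{e_1,\ldots,e_{n-s+2}\}$ becomes spanning, giving $r(M')\le \tfrac{n}{2}$ --- a contradiction. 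Your rank-squeeze via \Cref{set_rank} and \Cref{free} does not obviously yield any such obstruction: an elementary quotient followed by an elementary lift can leave every interval rank unchanged, so there is no evident monotone invariant along the resolution to contradict. The paper's argument is a one-step obstruction using circuits, not a global bookkeeping of ranks.
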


\bloo{Now consider the matroid $\Psi^n_s$, where $s\ge 5$. If $\Psi^n_s$ can be obtained from a spike, swirl, wheel, or whirl by a sequence of inflations, then $\Psi^n_s$ is an elementary lift of some $(s-2,s)$-cyclic matroid, or, equivalently, using \Cref{selfdual}, $(\Psi^n_s)^* \cong \Psi^n_s$ is the elementary quotient of some $(s,s-2)$-cyclic matroid. We shall establish a counterexample to Conjecture~\ref{conjecture} by showing that no such $(s,s-2)$-cyclic matroid exists; in fact, we prove a more general result.}

\bloo{Let $M'$ be a rank-$(\frac{n}{2}+1)$ matroid in which there is a cyclic ordering $\left.\sigma=(e_1, e_2, \ldots, e_n)\right.$ of its ground set such that $\{e_i, e_{i+1}, \ldots, e_{i+s-1}\}$ is a circuit of $M$ for all odd $i\in [n]$. Further assume that $\sigma$ is also an $(s,s)$-cyclic ordering of $\Psi^n_s$ such that $\{e_i,e_{i+1},\ldots,e_{i+s-1}\}$ is a circuit of $\Psi^n_s$ for all odd $i \in [n]$. The following results show that $\Psi^n_s$ is not a quotient of $M'$. For the next lemma see, for example, \cite[Proposition 7.3.6]{ox11}.}

\begin{lemma}
Let $M_1$ and $M_2$ be matroids on the same ground set. Then $M_2$ is a quotient of $M_1$ if and only if every circuit of $M_1$ is a union of circuits of $M_2$.
\label{quotient}
\end{lemma}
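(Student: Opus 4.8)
The plan is to derive both directions from the standard equivalence that, for matroids on a common ground set, $M_2$ is a quotient of $M_1$ if and only if every flat of $M_2$ is a flat of $M_1$ (see, for example, \cite[Section~7.3]{ox11}). With this equivalence in hand, each implication reduces to a short closure argument.

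For the forward direction, I would fix a circuit $C$ of $M_1$ and an element $e\in C$ and consider $\cl_{M_2}(C-e)$. This set is a flat of $M_2$, hence a flat of $M_1$; since it contains $C-e$ it must contain $\cl_{M_1}(C-e)$, and therefore it contains $e$. Thus $e\in\cl_{M_2}(C-e)$, so there is a circuit $C'$ of $M_2$ with $e\in C'\subseteq C$. As $e$ ranges over all of $C$, every element of $C$ lies in a circuit of $M_2$ contained in $C$, so $C$ is the union of the circuits of $M_2$ contained in it.

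For the converse, assuming that every circuit of $M_1$ is a union of circuits of $M_2$, I would show that every flat $F$ of $M_2$ is a flat of $M_1$. If not, there is a flat $F$ of $M_2$ and an element $e\in\cl_{M_1}(F)-F$; then $F\cup\{e\}$ contains a circuit $C$ of $M_1$ with $e\in C$, so $C-e\subseteq F$. By hypothesis $C$ is a union of circuits of $M_2$, so some circuit $C'$ of $M_2$ satisfies $e\in C'\subseteq C\subseteq F\cup\{e\}$; hence $C'-e\subseteq F$ and $e\in\cl_{M_2}(C'-e)\subseteq\cl_{M_2}(F)=F$, a contradiction. Therefore every flat of $M_2$ is a flat of $M_1$, and so $M_2$ is a quotient of $M_1$.

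I do not anticipate a genuine difficulty here: the content lies entirely in the two closure arguments above, and the only point requiring care is that the characterisation of quotients via flats is itself invoked rather than reproved; a self-contained treatment would instead construct a matroid $N$ on $E(M_1)\cup S$ with $M_1=N\setminus S$ and $M_2=N/S$ from a modular-cut factorisation, which is more work than is needed. Since the statement is precisely \cite[Proposition~7.3.6]{ox11}, in the paper it suffices to cite it.
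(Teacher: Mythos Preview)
Your proposal is correct, and in fact you have anticipated exactly what the paper does: it states the lemma without proof and simply refers the reader to \cite[Proposition~7.3.6]{ox11}. Your closure-based argument via the flat characterisation of quotients is a standard and valid route to this result, and your closing remark that a citation suffices matches the paper's treatment precisely.
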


Key to the counterexample shall be the following sets. Let $M$ be a matroid on $n$ elements and let $s$ be an integer exceeding three. Suppose that $\left.\sigma=(e_1, e_2, \ldots, e_n)\right.$ is a cyclic ordering of $E(M)$ such that, for all odd $i\in [n]$, the set $\{e_i, e_{i+1}, \ldots, e_{i+s-1}\}$ is an $s$-element circuit of $M$. For all odd $i \in [n]$, and for all integers $k$ and $\ell$ such that $2 \leq k,\ell \leq s-1$ and $s-1 \leq k+\ell \leq 2s-4$, define
\[C_{i, k, \ell} = \sigma(i, i+k-1) \cup \sigma(i+2k+\ell-s+2, i+2k+2\ell-s+1).\]
Informally, \blue{starting at $e_i$}, there are $k$ consecutive elements of $\sigma$ in $C_{i, k, \ell}$, followed by $k+\ell-(s-2)$ consecutive elements of $\sigma$ not in $C_{i, k, \ell}$, followed by $\ell$ consecutive elements of $\sigma$ in $C_{i, k, \ell}$.

The next lemma establishes that certain subsets of the ground set of $\Psi^n_s$ containing $C_{i, k, \ell}$ are circuits of $\Psi^n_s$. The subsequent lemma shows that these subsets are also circuits of $M'$. We will eventually combine these two lemmas to show that $\Psi^n_s$ is not a quotient of $M'$.

\begin{lemma}
Let $s$ be an integer exceeding three, and let $\sigma=(e_1, e_2, \ldots, e_n)$ be an $(s, s)$-cyclic ordering of $\Psi^n_s$ such that, for all odd $i\in [n]$, the set $\{e_i, e_{i+1}, \ldots, e_{i+s-1}\}$ is an $s$-element circuit of $\Psi^n_s$. Suppose that $n\ge 4s-8$. Then, for all odd $i\in [n]$, and for all $k$ and $\ell$ such that $2\le k, \ell\le s-1$ and $s-1\le k+\ell\le 2s-4$, the set $C_{i, k, \ell}\cup \{\bloo{x}\}$ is a circuit of $\Psi^n_s$, where $\bloo{x} \in \sigma(i+k,i+2k+\ell-s+1)$, and
$$\bloo{x}\neq
\begin{cases}
e_{i+k} & \mbox{if $k=s-1$}; \\
e_{i+2k+\ell-s+1} & \mbox{if $\ell=s-1$}.
\end{cases}$$
\label{psicircuits}
\end{lemma}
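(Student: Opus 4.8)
The strategy is to identify $C := C_{i,k,\ell}\cup\{x\}$ as a circuit \emph{from above}: first show that $C$ is dependent, then use the description of the circuits of $\Psi^n_s = M^*[\mathscr I]$ provided by \Cref{circuits} to show that any circuit contained in $C$ must equal $C$. Set $d = k+\ell-s+1$, so that $|C| = k+\ell+1 = d+s$. The set $C$ is contained in the interval $A = \sigma(i, i+2k+2\ell-s+1)$, which has $2d+s$ elements and is the convex hull of $C$ in the cyclic order; as $|A|\le 3s-6<n$, the interval $A$ does not wrap around. The elements of $A$ not in $C$ are the $d$ elements of $\sigma(i+k, i+2k+\ell-s+1)\setminus\{x\}$, which form a single ``hole-window''; to its left $C$ has a run of $k$ consecutive elements (of $k+1$ if $x=e_{i+k}$), and to its right a run of $\ell$ consecutive elements (of $\ell+1$ if $x=e_{i+2k+\ell-s+1}$). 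The two exclusion hypotheses on $x$ are precisely what guarantees that both these runs, and hence every maximal run of consecutive elements of $C$, has at most $s-1$ elements. If $d=0$ then $C = \sigma(i,i+s-1)$, an $s$-element circuit of $\Psi^n_s$ by hypothesis, so we may assume $d\ge 1$.

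To see that $C$ is dependent, use the presentation $x_p = 2p-1$, $y_p = 2p+s-2$ of $\Psi^n_s$, so that $N(p) = \sigma(2p-1, 2p+s-2)$, and put $i' = \tfrac{i+1}{2}$ (an integer, since $i$ is odd) and $j' = i'+d$. Then $N([i',j']) = \sigma(x_{i'}, y_{j'}) = A$ and $|[i',j']| = d+1$, so $|N([i',j'])\setminus C| = d < d+1 = |[i',j']|$; by Hall's Theorem there is no complete matching of $[\tfrac n2]$ into $E\setminus C$, so $E\setminus C$ is not cospanning and $C$ is dependent in $\Psi^n_s$. Hence $C$ contains a circuit $C'$, and it remains to show that $C' = C$.

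Suppose instead that $C'\subsetneq C$. Since $|C'| < |C| = k+\ell+1 \le 2s-3 \le \tfrac n2 + 1$, the circuit $C'$ is not spanning, so by \Cref{circuits} there are $a',b'\in[\tfrac n2]$ with $N([a',b']) = \sigma(2a'-1, 2b'+s-2)$ a single interval and $|C'| = |N([a',b'])| - |[a',b']| + 1$; writing $d' = b'-a'$, this interval has $2d'+s$ elements and $|C'| = d'+s$, so $d'\le d-1$. If $a' = b'$, then $C' = N(a')$ is a set of $s$ consecutive elements contained in $C$, impossible since every run of $C$ has at most $s-1$ elements. So $a'\ne b'$, and parts (iii) and (iv) of \Cref{circuits} put the consecutive pairs $\{e_{2a'-1}, e_{2a'}\}$ and $\{e_{2b'+s-3}, e_{2b'+s-2}\}$ inside $C'\subseteq C$. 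Each such pair lies in a maximal run of $C$; but elements of a common run differ in index by at most $s-2$, whereas (using $d'\ge 1$, $d'\le s-4$, and $n\ge 4s-8$) the two pairs are separated by at least $s+1$ indices in either cyclic direction, so they lie in different runs. A short count then shows that $\sigma(2a'-1, 2b'+s-2)$ cannot wrap around through $E\setminus A$: doing so would force it to contain all of $E\setminus A$, which has at least $s-2$ elements, none of them in $C'$, contradicting $|\sigma(2a'-1, 2b'+s-2)\setminus C'| = d' \le d-1 \le s-4$. Hence $\sigma(2a'-1, 2b'+s-2)\subseteq A$, with $e_{2a'-1}$ in the left run and $e_{2b'+s-2}$ in the right run of $C$, so this interval contains the entire hole-window; consequently $\sigma(2a'-1, 2b'+s-2)\setminus C$ is precisely the set of $d$ holes of $C$. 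Therefore $d'+s = |C'| \le |\sigma(2a'-1, 2b'+s-2)\cap C| = (2d'+s) - d$, so $d\le d'$ -- contradicting $d'\le d-1$. This contradiction shows $C' = C$, so $C$ is a circuit of $\Psi^n_s$, completing the proof.

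The step I expect to require the most care is the index bookkeeping in the third paragraph: determining where the indices $2a'-1$ and $2b'+s-2$ sit relative to the left run, the hole-window, and the right run of $C$ (so that the interval $\sigma(2a'-1, 2b'+s-2)$ provably captures all $d$ holes), and excluding the wrap-around configurations permitted by the cyclic ordering -- this is exactly where the bounds $n\ge 4s-8$, $k,\ell\le s-1$, and $d\le s-3$ are used. The two exclusion hypotheses on $x$ enter precisely at the assertions that every run of $C$ has at most $s-1$ elements; without them, both the case $a'=b'$ and the ``different runs'' argument could fail.
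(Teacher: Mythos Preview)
Your proof is correct and follows essentially the same strategy as the paper: establish dependence of $C$ via Hall's theorem applied to the interval $[i_0,j_0]$ (your $[i',j']$), then invoke \Cref{circuits} on a circuit $C'\subseteq C$, dispose of the $a'=b'$ case using the fact that no run of $C$ has $s$ consecutive elements, and for $a'\ne b'$ use the boundary pairs from \Cref{circuits}(iii),(iv) together with a size comparison to force $C'=C$. The paper carries out the final count via explicit offsets $i',j'$ with $i_1=i_0+i'$ and $j_1=j_0-j'$, deriving $|C|=k+\ell+1-(i'+j')$ and $|C|\le k+\ell+1-2(i'+j')$; your hole-counting inequality $d'+s\le(2d'+s)-d$ is the same computation in different notation (with $d-d'$ playing the role of $i'+j'$), packaged as a contradiction rather than a direct equality.
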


\begin{proof}
Recall the bipartite graph $G^n_s$ whose vertex parts are $\left.E=\{e_1, e_2, \ldots, e_n\}\right.$ and $\{1, 2, \ldots, \frac{n}{2}\}$ and, for all $i\in \{1, 2, \ldots, \frac{n}{2}\}$, the set of neighbours of $i$ is
$$\bloo{N(i)}=\{e_{2i-1}, e_{2i}, \ldots, e_{2i+s-2}\},$$
where subscripts are interpreted modulo $n$. \bloo{Let $i_0 = \tfrac{i+1}{2}$ and $\left.j_0 = \tfrac{i+2(k+\ell-s)+3}{2}\right.$.} Observe that
\begin{align*}
\bloo{N\left(i_0\right)}=\{e_i, e_{i+1}, \ldots, e_{i+k-1}, \ldots, e_{i+s-1}\}
\end{align*}
and
\begin{align*}
\bloo{N\left(j_0\right)}=\{e_{i+2(k+\ell-s)+2}, & e_{i+2(k+\ell-s)+3}, \ldots, \\
& e_{i+2k+\ell-s+2}, \ldots, e_{i+2(k+\ell)-s+1}\}.
\end{align*}
In particular, $\bloo{N\left(i_0\right)}\cup \bloo{N\left(j_0\right)}$ contains $C_{i, k, \ell}$. Also recall that $\Psi^n_s$ is the dual of the transversal matroid on $E$ in which
$$\bloo{(N(1), N(2), \ldots, N({\textstyle \frac{n}{2}}))}$$
is a presentation. 

We first show that $C_{i, k, \ell}\cup \{x\}$ is dependent in $\Psi^n_s$ by showing that $\left.E-(C_{i, k, \ell}\cup \{x\})\right.$ is not cospanning in $\Psi^n_s$. Consider $G^n_s$ and the subset \bloo{$\left[i_0,j_0\right]$} of $[\frac{n}{2}]$. \blue{Since $n \geq 4s-8$, we have that \bloo{$N\left(\left[i_0,j_0\right]\right) \neq E$}}, and so
$$\left|N\left(\bloo{\left[i_0,j_0\right]}\right)\right| = 2k+2\ell-s+2.$$
Therefore, as $C_{i, k, \ell}\cup \{e_j\}\subseteq N\left(\left[\bloo{i_0,j_0}\right]\right)$ and $|C_{i, k, \ell}\cup \{x\}|=k+\ell+1$, it follows that
\begin{align*}
\Big|N\left(\left[{\textstyle i_0, j_0}\right]\right) \left. - \right. & (C_{i, k, \ell}\cup \{x\})\Big| \\
& = \left|N\left(\left[{\textstyle i_0,j_0}\right]\right)\right|-\left|C_{i, k, \ell}\cup \{x\}\right| \\
& = (2k+2\ell-s+2)-(k+\ell+1) \\
& = k+\ell-s+1 \\
& < k+\ell-s+2 \\
& = \left|\left[{\textstyle i_0,j_0}\right]\right|.
\end{align*}
Hence, by Hall's Theorem~\cite{hal35}, $E-(C_{i, k, \ell}\cup \{x\})$ is not cospanning in $\Psi^n_s$. Thus $C_{i, k, \ell}\cup \{x\}$ is dependent in $\Psi^n_s$.

Since $C_{i, k, \ell}\cup \{x\}$ is dependent, $C_{i, k, \ell}\cup \{x\}$ contains a circuit $C$ of $\Psi^n_s$. If $|C|=|E|-\frac{n}{2}+1=\frac{n}{2}+1$, then, as $n \geq 4s-8$, it follows that $|C| \geq 2s-3$. Furthermore, $|C| \leq |C_{i, k, \ell}\cup \{x\}| = k+\ell+1 \leq 2s-3$. Thus $C=C_{i, k, \ell}\cup \{x\}$, and so $C_{i, k, \ell}\cup \{x\}$ is a circuit of $\Psi^n_s$. Therefore, by Lemma~\ref{circuits}, we may assume that there are $i_1, j_1\in [\frac{n}{2}]$ satisfying (i)--(v) of that lemma. If $i_1=j_1$, then, by Lemma~\ref{circuits}(ii), \bloo{$C=N(i_1)$} for some $i_1\in [\frac{n}{2}]$. Now, $C$, and thus $C_{i,k,\ell} \cup \{x\}$, contains $s$ consecutive elements of $\sigma$. But if $C_{i,k,\ell} \cup \{x\}$ contains $s$ consecutive elements, then $k+\ell = s-1$, in which case $C_{i,k,\ell} \cup \{x\}$ is a circuit, and we are done. Therefore $i_1\neq j_1$, and so, by Lemma~\ref{circuits}\bloo{(iii) and~(iv)},
\begin{align}\label{subset1}
N([i_1, j_1])-N([i_1+1, j_1])=\bloo{\{e_{2i_1-1},e_{2i_1}\}}\subseteq C
\end{align}
and
\begin{align}\label{subset2}
N([i_1, j_1])-N([i_1, j_1-1])\bloo{=\{e_{2j_1+s-3},e_{2j_1+s-2}\}}\subseteq C.
\end{align}

\bloo{Since $\ell \leq s-1$ and $|N([i_1,j_1])|>s$, we have that $e_{2i_1-1}$ is not contained in the set of $\ell+1$ consecutive elements $\sigma(i+2k+\ell-s+1,i+2k+2\ell-s+1)$. Also, it follows from (\ref{subset1}) that $e_{2i_1-1} \neq x$. Therefore,
\[e_{2i_1-1} \in \sigma(i,i+k-1)\]
and so $i_1 = i_0 + i'$ for some $0 \leq i' \leq \lfloor{\frac{k}{2}}\rfloor$. Symmetrically,
\[e_{2j_1+s-2} \in \sigma(i+2k+\ell-s+2,i+2k+2\ell-s+1)\]
and so $j_1 = j_0 - j'$ for some $0 \leq j' \leq \lceil{\frac{\ell}{2}}\rceil$.}

\bloo{By \Cref{circuits}(ii), $C$ is a subset of $N([i_1,j_1])$ containing $\left.|N([i_1,j_1])| - |[i_1,j_1]| + 1\right.$ elements. Now,
\[|N([i_1,j_1])| = |N([i_0,j_0])| - 2(i'+j') = 2k+2\ell-s+2 - 2(i'+j'),\]
and
\[|[i_1,j_1]| = |[i_0,j_0]| - (i'+j')=k+\ell-s+2 - (i'+j')\]
so
\begin{align} \label{circuitsize1}
|C| = k+\ell+1-(i'+j').
\end{align}
On the other hand,
\begin{align} \label{circuitsize2}
|C| \leq |(C_{i,k,\ell}\cup\{x\}) \cap N([i_1,j_1])| = k+\ell+1 - 2(i'+j').
\end{align}}

\bloo{Therefore, since both (\ref{circuitsize1}) and (\ref{circuitsize2}) hold, we have that $i'=j'=0$, that is $i_0=i_1$ and $j_0=j_1$, and that $|C| = |C_{i,k,\ell}\cup\{x\}|$. Hence, $C=C_{i,k,\ell}\cup\{x\}$, completing the proof of the lemma.}
\end{proof}


\begin{lemma}
\label{quotient_circuits}
Let $n \geq 4s-8$, and suppose that $\Psi^n_s$ is a quotient of $M'$. Then, for all odd $i \in [n]$, and for all $k$ and $\ell$ such that $2 \leq k,\ell \leq s-1$ and $s-1 \leq k+\ell \leq 2s-4$, the set $C_{i, k, \ell} \cup \{\bloo{x}\}$ is a circuit of $M'$, where \bloo{$x \in \sigma(i+k,i+2k+\ell-s+1)$}, and
\bloo{$$x \neq
\begin{cases}
e_{i+k} & \mbox{if $k=s-1$}; \\
e_{i+2k+\ell-s+1} & \mbox{if $\ell=s-1$}.
\end{cases}$$}
\end{lemma}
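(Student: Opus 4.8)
The plan is to verify directly that $C_{i,k,\ell}\cup\{x\}$ is a circuit of $M'$ by establishing two things separately: that $C_{i,k,\ell}\cup\{x\}$ is dependent in $M'$, and that every proper subset of $C_{i,k,\ell}\cup\{x\}$ is independent in $M'$. The first of these will use only the hypothesis that $\sigma(j,j+s-1)$ is a circuit of $M'$ for each odd $j$; the second is where the assumption that $\Psi^n_s$ is a quotient of $M'$ enters. Before anything else I would record the bookkeeping forced by $2\le k,\ell\le s-1$ and $s-1\le k+\ell\le 2s-4$: the two runs comprising $C_{i,k,\ell}$ are disjoint and $x$ lies strictly between them, so $|C_{i,k,\ell}\cup\{x\}|=k+\ell+1$; and $C_{i,k,\ell}\cup\{x\}\subseteq\sigma(i,\,i+2k+2\ell-s+1)$, an interval of length $2k+2\ell-s+2<n$ (as $n\ge 4s-8$ and $k+\ell\le 2s-4$ give $2k+2\ell-s+2\le 3s-6<4s-8\le n$), hence a proper sub-arc of $\sigma$. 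Writing $p:=k+\ell-s+1\ge0$, we have $i+2k+2\ell-s+1=i+(s-1)+2p$.

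The key ingredient is a rank bound for $M'$ analogous to \Cref{set_rank}: for every odd $j\in[n]$ and every $p\ge0$ such that $\sigma(j,j+s-1+2p)$ is a proper sub-arc of $\sigma$, one has $r_{M'}(\sigma(j,j+s-1+2p))\le s-1+p$. I would prove this by induction on $p$. For $p=0$, $\sigma(j,j+s-1)$ is a circuit of $M'$, of rank $s-1$. For the inductive step: $j+2p$ is odd, so $\sigma(j+2p,\,j+2p+s-1)$ is a circuit of $M'$; it intersects $\sigma(j,\,j+s-3+2p)$ in the $(s-2)$-element arc $\sigma(j+2p,\,j+s-3+2p)$, which is a proper subset of that circuit and so independent, of rank $s-2$. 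Since $\sigma(j,\,j+s-1+2p)$ is the union of $\sigma(j,\,j+s-3+2p)$ and $\sigma(j+2p,\,j+2p+s-1)$, submodularity of the rank function together with the induction hypothesis gives
\[
r_{M'}\bigl(\sigma(j,\,j+s-1+2p)\bigr)\le\bigl(s-1+(p-1)\bigr)+(s-1)-(s-2)=s-1+p.
\]

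Taking $j=i$ and $p=k+\ell-s+1$, this bound yields $r_{M'}(\sigma(i,\,i+2k+2\ell-s+1))\le k+\ell$, and since $C_{i,k,\ell}\cup\{x\}$ is a $(k+\ell+1)$-element subset of $\sigma(i,\,i+2k+2\ell-s+1)$ it is dependent in $M'$. For minimality, the hypotheses on $i,k,\ell,x$---including the exclusions of $e_{i+k}$ when $k=s-1$ and of $e_{i+2k+\ell-s+1}$ when $\ell=s-1$---together with $n\ge 4s-8$ are precisely those of \Cref{psicircuits}, so $C_{i,k,\ell}\cup\{x\}$ is a circuit of $\Psi^n_s$ and hence every proper subset of it is independent in $\Psi^n_s$. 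Because $\Psi^n_s$ is a quotient of $M'$, we have $r_{M'}(Y)\ge r_{\Psi^n_s}(Y)$ for all $Y\subseteq E$, so every proper subset of $C_{i,k,\ell}\cup\{x\}$ is independent in $M'$ as well. (Alternatively one can use \Cref{quotient}: any circuit of $M'$ contained in $C_{i,k,\ell}\cup\{x\}$ would be a union of circuits of $\Psi^n_s$, which forces it to be $C_{i,k,\ell}\cup\{x\}$ itself.) Being dependent in $M'$ while all of its proper subsets are independent, $C_{i,k,\ell}\cup\{x\}$ is a circuit of $M'$.

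I do not expect a genuine obstacle; the one thing requiring care is the index arithmetic in the induction---checking that each circuit $\sigma(j+2p,\,j+2p+s-1)$ used really does begin at an odd index, that the relevant overlaps are exactly $(s-2)$-element arcs, and that all of the arcs involved remain proper sub-arcs of $\sigma$---and it is precisely here that the lower bound $n\ge 4s-8$, together with $k,\ell\le s-1$, is used.
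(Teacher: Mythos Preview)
Your proof is correct and takes a genuinely different route from the paper's. The paper argues by induction on $k+\ell$: at each step it produces, via repeated applications of circuit elimination between previously-constructed circuits and the consecutive $s$-element circuits of $M'$, a circuit of $M'$ contained in $C_{i,k,\ell}\cup\{x\}$; several sub-cases arise depending on whether $k$ or $\ell$ equals $s-1$, whether $k=\ell=3$, and on the position of $x$ within the gap. Your approach replaces all of this with a single rank inequality $r_{M'}(\sigma(j,j+s-1+2p))\le s-1+p$, proved by a short submodularity induction, which immediately gives dependence of $C_{i,k,\ell}\cup\{x\}$ in $M'$; minimality then follows exactly as in the paper from \Cref{psicircuits} and the quotient hypothesis. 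Your argument is shorter and avoids the case analysis entirely, and the rank bound you isolate is essentially the same computation the paper later redoes (in the proof of the proposition following \Cref{quotient_circuits}) to bound $r(M')$. The paper's circuit-elimination approach, by contrast, is more hands-on and perhaps makes it clearer exactly which pairs of circuits combine at each stage, but at the cost of length.
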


\begin{proof}
Since $\Psi^n_s$ is a quotient of $M'$, it follows by Lemma~\ref{quotient} that every circuit of $M'$ is a union of circuits of $\Psi^n_s$. Now, by Lemma~\ref{psicircuits}, $C_{i, k, \ell}\cup \{x\}$ is a circuit of $\Psi^n_s$. Therefore, to prove the lemma, it suffices to show that $M'$ has a circuit contained in $C_{i, k, \ell} \cup \{x\}$. The proof is by induction on $k + \ell$.

If $k+\ell = s-1$, then
$$C_{i, k, \ell} = \sigma(i, i+k-1)\cup \sigma(i+k+1, i+s-1).$$
Therefore, $x = e_{i+k}$, and $C_{i, k, \ell}\cup \{x\} = \sigma(i, i+s-1)$ which is a circuit of $M'$. Furthermore, if $k+\ell = s$, then
$$C_{i, k, \ell} = \sigma(i, i+k-1)\cup \sigma(i+k+2, i+s+1),$$
so $C_{i, k, \ell}\cup \{x\} = \sigma(i, i+s+1) - \{y\}$, where $y$ is the element of $\{e_{i+k}, e_{i+k+1}\}$ which is not equal to $x$. Since $y\in \sigma(i, i+s-1)\cap \sigma(i+2, i+s+1)$, it follows by circuit elimination that $M'$ has a circuit contained in $C_{i, k,\ell} \cup \{x\}$, as desired.

Now suppose that the lemma holds for all $2\le k', \ell'\le s-1$ and $\left.s-1\le k'+\ell'\le 2s-4\right.$ such that $k'+\ell' = k+\ell-1$. First assume that either $k$ or $\ell$ is equal to $s-1$. If $k=s-1$, then $x\neq e_{i+s-1}$. Therefore, by the induction assumption,
$$C_{i+2, k-1, \ell}\cup \{x\}=\sigma(i+2, i+s-1)\cup \{x\}\cup \sigma(i+\ell+s, i+2\ell+s-1)$$
is a circuit of $M'$. Thus, by circuit elimination between $C_{i+2, k-1, \ell}\cup \{x\}$ and $\sigma(i, i+s-1)$ on $e_{i+s-1}$, the matroid $M'$ has a circuit contained in
\begin{align*}
\sigma(i, i+s-2)\cup \{x\}\cup \sigma(i+\ell+s, i+2\ell+s-1) & = C_{i, s-1, \ell}\cup \{x\} \\
& = C_{i, k, \ell}\cup \{x\}
\end{align*}
as desired. A similar argument shows the lemma holds if $\ell=s-1$.

We may now assume that neither $k$ nor $\ell$ is equal to $s-1$. Furthermore, since $k+\ell \geq s+1$, we have that $k \neq 2$ and $\ell \neq 2$. Assume $k = \ell = 3$. This implies that $s = 5$, so
$$C_{i, k, \ell} = C_{i, 3, 3} = \{e_i, e_{i+1}, e_{i+2}, e_{i+6}, e_{i+7}, e_{i+8}\}.$$
By the induction assumption, if $x \in \{e_{i+4}, e_{i+5}\}$, then the desired result follows from circuit elimination between 
\[C_{i, 3, 2} \cup \{e_{i+4}\} = \{e_i, e_{i+1}, e_{i+2}, e_{i+4}, e_{i+5}, e_{i+6}\}\]
and $\{e_{i+4}, e_{i+5}, e_{i+6}, e_{i+7}, e_{i+8}\}$. If $j = i+3$, then the result follows from circuit elimination between 
\[C_{i+2, 2, 3} \cup \{e_{i+4}\} = \{e_{i+2}, e_{i+3}, e_{i+4}, e_{i+6}, e_{i+7}, e_{i+8}\}\]
and $\{e_i, e_{i+1}, e_{i+2}, e_{i+3}, e_{i+4}\}$.

Lastly, assume that either $k\ge 4$ or $\ell\ge 4$, which implies $s\geq 6$. We establish that the lemma holds when $k\ge 4$. The proof of the lemma when $\ell\ge 4$ is similar and omitted. Assume $k\geq 4$. \bloo{Suppose} $x\neq e_{i+2k+\ell-s+1}$, that is $x\in \sigma(i+k, i+2k+\ell-s)$. Then, by the induction assumption, the set 
\[C_{i, k, \ell-1} \cup \{x\} = \sigma(i, i+k-1) \cup \{x\} \cup \sigma(i+2k+\ell-s+1, i+2k+2\ell-s-1)\] 
is a circuit. \bloo{If $\ell=s-2$ and $x = e_{i+2k+\ell-s}$, then the set
\[\sigma(i+2k+\ell-s,i+2k+2\ell-s+1)=\sigma(i+2k-2,i+2k+s-3)\]
is an $s$-element circuit of $M'$. Hence, circuit elimination between this circuit and $C_{i,k,\ell-1}\cup\{x\}$ on the element $e_{i+2k+\ell-s+1}$ gives a ciruit of $M'$ contained in
\[\sigma(i, i+k-1) \cup \{e_{i+2k+\ell-s}\} \cup \sigma(i+2k+\ell-s+2, i+2k+2\ell-s+1) = C_{i, k, \ell} \cup \{x\}\]
as desired. Otherwise, since $k \geq 4$, the set}
\[C_{i+2, k-2,\ell+1} \cup \{x\} = \sigma(i+2, i+k-1) \cup \{x\} \cup \sigma(i+2k+\ell-s+1, i+2k+2\ell-s+1)\]
is a circuit. \bloo{Again, circuit elimination between this circuit and $C_{i,k,\ell-1}\cup\{x\}$ on the element $e_{i+2k+\ell-s+1}$} implies that $M'$ has a circuit contained in
\[\sigma(i, i+k-1) \cup \{x\} \cup \sigma(i+2k+\ell-s+2, i+2k+2\ell-s+1) = C_{i, k, \ell} \cup \{x\}\]
as desired. 

\bloo{The final case to consider is when $x = e_{i+2k+\ell-s+1}$. By the induction assumption, and since $k \neq s-1$, the set}
\[
C_{i,k,\ell-1} \cup \{e_{i+k}\} = \sigma(i,i+k-1) \cup \{e_{i+k}\} \cup \sigma(i+2k+\ell-s+1,i+2k+2\ell-s-1)
\]
is a circuit of $M'$. Additionally, since $k \geq 4$, the set
\[
C_{i+2,k-2,\ell+1} \cup \{e_{i+k}\} = \sigma(i+2,i+k-1) \cup \{e_{i+k}\} \cup \sigma(i+2k+\ell-s+1,i+2k+2\ell-s+1)
\] is a circuit of $M'$. Circuit elimination between these circuits on the element $e_{i+k}$ implies that $M'$ has a circuit contained in
\[
\sigma(i,i+k-1) \cup \sigma(i+2k+\ell-s+1,i+2k+2\ell-s+1) = C_{i,k,\ell} \cup \{e_{i+2k+\ell-s+1}\}.
\]
This completes the proof of the case when $k\ge 4$, and thus completes the proof of the lemma.
\end{proof}

\begin{proposition}
Let $n \geq 4s-8$, where $s$ is an integer exceeding three. Then $\Psi^n_s$ is not a quotient of $M'$.
\end{proposition}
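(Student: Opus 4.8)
The plan is to argue by contradiction. Suppose $\Psi^n_s$ is a quotient of $M'$. By \Cref{quotient}, every circuit of $M'$ is a union of circuits of $\Psi^n_s$; in particular, $r_{M'}(A)\ge r_{\Psi^n_s}(A)$ for every $A\subseteq E(M')$, and by \Cref{quotient_circuits} every set $C_{i,k,\ell}\cup\{x\}$ (for $i$ odd and admissible $k,\ell,x$), as well as every window $\sigma(i,i+s-1)$ with $i$ odd, is a circuit of $M'$. The goal is to exhibit a circuit of $M'$ that is \emph{not} a union of circuits of $\Psi^n_s$, contradicting \Cref{quotient}.

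The structural input is \Cref{circuits} (together with \Cref{psicircuits}): every circuit of $\Psi^n_s$ of size less than $\tfrac n2+1$ is contained in a cyclic interval $\sigma(2a-1,2b+s-2)$ of length at most $3s-2$ whose left-hand endpoint $e_{2a-1}$, which sits at an odd position of $\sigma$, belongs to the circuit; and if the interval is longer than $s$ then the circuit misses at most one element in every two consecutive positions of the interval. I would first isolate, as an auxiliary lemma, the resulting ``parity-balance'' consequence: if $X\subseteq E(M')$ with $|X|\le\tfrac n2$ is a union of circuits of $\Psi^n_s$, then for each $e_m\in X$ there is an odd $p$ with $m-(3s-3)\le p\le m$ and $e_p\in X$, and $X$ contains enough of $\sigma(p,m)$ to host a circuit of $\Psi^n_s$ through $e_m$. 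In particular no such $X$ can contain an element $e_m$ at a position of the ``wrong'' parity that is isolated from the nearest odd-anchored short interval inside $X$.

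I would then manufacture a circuit of $M'$ violating this balance by (strong) circuit elimination among the circuits of $M'$ listed above: eliminating two circuits $C_{i,k,\ell}\cup\{x_1\}$ and $C_{i,k,\ell}\cup\{x_2\}$ with distinct admissible $x_1,x_2$ against a common element chosen so that what survives is a run of consecutive elements broken at an odd position, or eliminating a window circuit against a suitable $C$-circuit to the same effect, so that the element $e_i$ (or another element at an odd position whose two neighbours on one side are deleted) is left stranded in the circuit, with no short odd-anchored interval of the circuit able to cover it. The difficulty is that circuit elimination only produces a \emph{subcircuit} of the union, and a priori this subcircuit might collapse to one of the many circuits of $\Psi^n_s$ sitting inside the relevant interval. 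To rule this out I would compute, for each interval $\sigma(j,j')$ occurring in the argument, that $r_{M'}(\sigma(j,j'))=r_{\Psi^n_s}(\sigma(j,j'))$ — the inequality $r_{M'}\ge r_{\Psi^n_s}$ is free, and the reverse follows from the window circuits of $M'$ via the induction of \Cref{set_rank}, valid because $n\ge 4s-8$ keeps all the relevant intervals short of wrapping; these rank identities pin down the eliminated circuit exactly as the ``broken interval'' I want, which by the auxiliary lemma is not a union of circuits of $\Psi^n_s$.

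The main obstacle is exactly this last point: getting a genuinely ``bad'' circuit rather than one of $\Psi^n_s$'s own circuits. The bulk of the proof is therefore the careful choice of which circuits to eliminate, which common element to eliminate on, and which element to retain, coupled with the rank bookkeeping that forces the outcome — and it is this bookkeeping, not any conceptual step, that needs the precise hypothesis $n\ge 4s-8$.
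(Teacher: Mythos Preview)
Your plan has a real gap and also misses a much simpler route. The gap is in the step where you hope that ``rank bookkeeping'' will pin down the circuit produced by elimination. Knowing $r_{M'}(\sigma(j,j'))=r_{\Psi^n_s}(\sigma(j,j'))$ on the relevant intervals tells you only that the interval contains the expected number of dependencies, not which subsets are circuits; in particular it does not force the outcome of a circuit elimination to be your intended ``broken interval'' rather than some circuit of $\Psi^n_s$ sitting inside it. So the punchline of your argument --- producing a specific circuit of $M'$ that is provably not a union of circuits of $\Psi^n_s$ --- is never actually reached.

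The paper's proof avoids this difficulty entirely by arguing with rank rather than with individual circuits. One of the ingredients you already have is the key: from the window circuits $\sigma(i,i+s-1)$ of $M'$ (for $i$ odd) one gets, exactly as in the \Cref{set_rank} induction you cite, that $r_{M'}(\sigma(1,n-s+2))\le \tfrac{n}{2}$. Now instead of eliminating circuits, use \Cref{quotient_circuits} directly: with $i=n-2s+5$ and $k=\ell=s-2$, the set
\[
\sigma(n-2s+5,n-s+2)\cup\{x\}\cup\sigma(1,s-2)
\]
is a circuit of $M'$ for every $x\in\{e_{n-s+3},\ldots,e_n\}$, so each such $x$ lies in $\cl_{M'}(\sigma(1,n-s+2))$. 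Hence $\sigma(1,n-s+2)$ spans $M'$, giving $r(M')\le\tfrac{n}{2}$, which contradicts the hypothesis $r(M')=\tfrac{n}{2}+1$. No circuit elimination, no parity lemma, no attempt to identify a ``bad'' circuit is needed --- the contradiction is purely at the level of rank.
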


\begin{proof}
\bloo{Suppose $\Psi^n_s$ is a quotient of $M'$. We establish a contradiction by showing that $r(M') \leq \frac{n}{2}$}. By definition of $M'$, $\{e_1, e_2, \ldots, e_s\}$ is a circuit with rank $s-1$. The element $e_{s+1}$ may or may not be in the closure of $\{e_1, e_2, \ldots, e_s\}$, so $r(\{e_1, e_2, \ldots, e_{s+1}\}) \leq s$. Since $\{e_3, e_4, \ldots, e_{s+2}\}$ is a circuit, $e_{s+2} \in \cl(\{e_1, e_2, \ldots, e_{s+1}\})$, that is $r(\{e_1, e_2, \ldots, e_{s+2}\}) \leq s$. Repeating this process, we see that $r(\{e_1, e_2, \ldots, e_{s+2u}\}) \leq s-1+u$ for all $u \le \frac{n-s}{2}$. In particular, when $u = \frac{n}{2} - s + 1$, we have that $r(\{e_1, e_2, \ldots, e_{n-s+2}\}) \leq \frac{n}{2}$. However, by \Cref{quotient_circuits} with $i = n-2s+5$ and $k=\ell=s-2$, the set
$$\{e_{n-2s+5}, e_{n-2s+6}, \ldots, e_{n-s+2}\} \cup \{x\} \cup \{e_1, e_2, e_3, \ldots, e_{s-2}\}$$
is a circuit for all $x \in \{e_{n-s+3}, e_{n-s+4}, \ldots, e_{n-1}, e_n\}$, and so $\{e_1,e_2,\ldots,e_{n-s+2}\}$ is spanning. This implies $r(M') \leq \frac{n}{2}$, a contradiction.
\end{proof}

\section*{Acknowledgements}
We thank the referees for their careful reading of the paper, and their constructive and helpful comments.

\end{document}